\newtheorem{theorem}{Theorem}[section]
\newtheorem{corollary}[theorem]{Corollary}
\newtheorem{proposition}[theorem]{Proposition}
\newtheorem{lemma}[theorem]{Lemma}
\theoremstyle{definition}
\newtheorem*{problem}{Problem}
\newtheorem{remark}[theorem]{Remark}
\newcommand{\nset}[1]{[{#1}]}
\newcommand{\IB}{\mathbb{B}}
\newcommand{\IN}{\mathbb{N}}
\newcommand{\IR}{\mathbb{R}}
\newcommand{\True}[1]{{#1}^{-1}(1)}  %{T_{#1}}
\newcommand{\False}[1]{{#1}^{-1}(0)}  %{F_{#1}}
\newcommand{\dual}[1]{{#1}^{\mathrm{d}}}
\newcommand{\comp}[1]{\overline{#1}}
\newcommand{\thr}{T}
\def\2{{\mathbb{B}}}
\DeclareMathOperator{\Pol}{cPol}
\DeclareMathOperator{\Inv}{cInv}
\DeclareMathOperator{\forbid}{forbid}
\def\aa{\mathbf{a}}
\def\bb{\mathbf{b}}
\def\ee{\mathbf{e}}
\def\xx{\mathbf{x}}
\def\yy{\mathbf{y}}
\def\zz{\mathbf{z}}
\def\uu{\mathbf{u}}
\def\vv{\mathbf{v}}
\def\ww{\mathbf{w}}
\def\00{\mathbf{0}}
\def\11{\mathbf{1}}
\def\IN{\mathbb{N}}
\newcommand{\Ptwoone}{$\Omega(1)$}
\newcommand{\PostsLattice}[1]{
  \begin{tikzpicture}[scale=#1, transform shape]
    \tikzstyle{every node} = [circle, fill=black,scale=0.5]
    \tikzstyle{every label} = [scale=2,draw=none, fill=none, label distance=-4]
    \node (P2) [label=above:$\Omega$] at (7.0   ,10.0) {};
    \node (T0) [label=above left:$T_0$]  at (7.0-1.5,10.0-.5) {};
    \node (T1) [label=above right:$T_1$] at (7.0+1.0,10.0-.5) {};
    \node (T)  at (7.0-.5,10.0-1.0) {};
    \node (M) [label=above left:$M$] at (6.76   ,9.38) {};
    \node (T0M) at (7.23-1.5, 9.51-.5) {};
    \node (T1M) at (6.76+1.0, 9.38-.5) {};
    \node (TM)  at (7.23-.5, 9.51-1.0) {};
    \node (L) [label=above right:$L$] at (7.0   , 5.2) {};
    \node (T0L) at (7.0-1.5, 5.2-.5) {};
    \node (T1L) at (7.0+1.0, 5.2-.5) {};
    \node (TL)  at (7.0-.5, 5.2-1.0) {};
    \node (AV)  at (7.0   , 3.0) {};
    \node (T0AV) at (7.0-1.5, 3.0-.5) {};
    \node (T1AV) at (7.0+1.0, 3.0-.5) {};
    \node (TAV)  at (7.0-.5, 3.0-1.0) {}; % I
    
    \node (P21) [label=right:\Ptwoone] at (7.0,3.8) {}; %\node [scale=2,draw=none, fill=none, right of=P21] {$[P_2^{(1)}]$};
    \node (S) [label=right:$S$] at (6.85,7.0) {};
    \node (SL) at (6.75,4.7) {};
    \node (SP21) at (6.65,2.5) {};
    \node (ST) at (6.5,6.5) {};
    \node (SM) [label=below left:$SM$] at (6.15,6.0) {};

    \foreach \from/\to in {
          P2/T0, P2/T1, T0/T, T1/T,
          M/T0M, M/T1M, T0M/TM, T1M/TM,
          P2/M, T0/T0M, T1/T1M, T/TM,
          L/T0L, L/T1L, T0L/TL, T1L/TL,
          P2/L, T0/T0L, T1/T1L, T/ST, ST/TL,
          AV/T0AV, AV/T1AV, T0AV/TAV, T1AV/TAV,
          L/P21, P21/AV, T0L/T0AV, T1L/T1AV, TL/TAV,
          P2/S, S/ST, S/SL, L/SL, SL/TL, SL/SP21, P21/SP21, SP21/TAV,
          ST/SM, SM/TAV}
    \draw [-] (\from) -- (\to);

    \coordinate (DiffT0xT)  at ( 1.0,-0.7);
    \coordinate (DiffT0xM)  at ( 2.0,-0.3);
    \coordinate (DiffT0xTM) at ( 3.0,-1.0);
    \node (T02) [label=left:$U_2$] at (-0.1, 8.5) {};
    \node (T02T)  at ($ (T02) + (DiffT0xT)  $) {};
    \node (T02M)  at ($ (T02) + (DiffT0xM)  $) {};
    \node (T02TM) at ($ (T02) + (DiffT0xTM) $) {};
    \node (T03) [label=left:$U_3$] at ($ (T02) - ( 0.0, 1.0) $) {};
    \node (T03T)  at ($ (T03) + (DiffT0xT)  $) {};
    \node (T03M)  at ($ (T03) + (DiffT0xM)  $) {};
    \node (T03TM) at ($ (T03) + (DiffT0xTM) $) {};
    \node (T0H) [draw=none, fill=none, scale=0.1] at ($ (T02) - ( 0.0, 1.5) $) {};
    \node (T0HT) [draw=none, fill=none, scale=0.1] at  ($ (T0H) + (DiffT0xT)  $) {};
    \node (T0HM) [draw=none, fill=none, scale=0.1] at  ($ (T0H) + (DiffT0xM)  $) {};
    \node (T0HTM) [draw=none, fill=none, scale=0.1] at ($ (T0H) + (DiffT0xTM) $) {};
    \node (T0e) [label=left:$U_{\infty}$] at ($ (T02) - ( 0.0, 2.3) $) {};
    \node (T0eT)  at ($ (T0e) + (DiffT0xT)  $) {};
    \node (T0eM)  at ($ (T0e) + (DiffT0xM)  $) {};
    \node (T0eTM) [label=right:$M_cU_{\infty}$] at ($ (T0e) + (DiffT0xTM)  $) {};
    
    \node (A) [label=below:$\Lambda$] at (.5+3.5,3.7) {};
    \node (AT1) at ($ (A) + ( 1.0,-0.5) $) {};
    \node (AT0) at ($ (A) + (-1.5,-0.5) $) {};
    \node (AT)  at ($ (A) + (-0.5,-1.0) $) {}; 

    \coordinate (DiffT1xT)  at (-1.0,-0.7);
    \coordinate (DiffT1xM)  at (-2.0,-0.3);
    \coordinate (DiffT1xTM) at (-3.0,-1.0);
    \node (T12) [label=right:$W_2$] at (13.6, 8.5) {};
    \node (T12T)  at ($ (T12) + (DiffT1xT)  $) {};
    \node (T12M)  at ($ (T12) + (DiffT1xM)  $) {};
    \node (T12TM) at ($ (T12) + (DiffT1xTM) $) {};
    \node (T13) [label=right:$W_3$] at ($ (T12) - ( 0.0, 1.0) $) {};
    \node (T13T)  at ($ (T13) + (DiffT1xT)  $) {};
    \node (T13M)  at ($ (T13) + (DiffT1xM)  $) {};
    \node (T13TM) at ($ (T13) + (DiffT1xTM) $) {};
    \node (T1H) [draw=none, fill=none, scale=0.1] at ($ (T12) - ( 0.0, 1.5) $) {};
    \node (T1HT) [draw=none, fill=none, scale=0.1] at  ($ (T1H) + (DiffT1xT)  $) {};
    \node (T1HM) [draw=none, fill=none, scale=0.1] at  ($ (T1H) + (DiffT1xM)  $) {};
    \node (T1HTM) [draw=none, fill=none, scale=0.1] at ($ (T1H) + (DiffT1xTM) $) {};
    \node (T1e) [label=right:$W_{\infty}$] at ($ (T12) - ( 0.0, 2.3) $) {};
    \node (T1eT)  at ($ (T1e) + (DiffT1xT)  $) {};
    \node (T1eM)  at ($ (T1e) + (DiffT1xM)  $) {};
    \node (T1eTM) [label=left:$M_cW_{\infty}$] at ($ (T1e) + (DiffT1xTM)  $) {};

    \node (V) [label=below:$V$] at (9.5,3.7) {};
    \node (VT0) at ($ (V) + (-1.0,-0.5) $) {};
    \node (VT1) at ($ (V) + ( 1.5,-0.5) $) {};
    \node (VT)  at ($ (V) + ( 0.5,-1.0) $) {}; 

    \foreach \from/\to in {
          T02/T02T, T02/T02M, T02T/T02TM, T02M/T02TM,
          T03/T03T, T03/T03M, T03T/T03TM, T03M/T03TM,
          T0e/T0eT, T0e/T0eM, T0eT/T0eTM, T0eM/T0eTM,
          T12/T12T, T12/T12M, T12T/T12TM, T12M/T12TM,
          T13/T13T, T13/T13M, T13T/T13TM, T13M/T13TM,
          T1e/T1eT, T1e/T1eM, T1eT/T1eTM, T1eM/T1eTM,
          T02/T03, T02T/T03T, T02M/T03M, T02TM/T03TM,
          T03/T0H, T03T/T0HT, T03M/T0HM, T03TM/T0HTM,
          T12/T13, T12T/T13T, T12M/T13M, T12TM/T13TM,
          T13/T1H, T13T/T1HT, T13M/T1HM, T13TM/T1HTM,
          T0eM/AT0, T0eTM/AT,
          T1eM/VT1, T1eTM/VT,
          A/AT0, A/AT1, AT0/AT, AT1/AT,
          V/VT0, V/VT1, VT0/VT, VT1/VT,
          A/AV, AT0/T0AV, AT1/T1AV, AT/TAV,
          V/AV, VT0/T0AV, VT1/T1AV, VT/TAV,
          T02TM/SM, T12TM/SM,
          T0/T02, T0M/T02M, T/T02T, TM/T02TM,
          T1/T12, T1M/T12M, T/T12T, TM/T12TM}
    \draw [-] (\from) -- (\to);
    \path (M) edge [out=215, in=70] (A);
    \path (T1M) edge [out=215, in=90] (AT1);
    \path (M) edge [out=325, in=110] (V);
    \path (T0M) edge [out=325, in=90] (VT0);

    \foreach \from/\to in {
          T0H/T0e, T0HT/T0eT, T0HM/T0eM, T0HTM/T0eTM,
          T1H/T1e, T1HT/T1eT, T1HM/T1eM, T1HTM/T1eTM}
    \draw [dotted] (\from) -- (\to);

    \draw [dashed,smooth] plot coordinates{
      ($(AT0)   +(-1.5,-0.2)$)
      ($ (L)    +( 0.1, 0.4)$)
      ($(VT1)   +( 1.5,-0.2)$)
      };
  \end{tikzpicture}
}
\begin{document}

\title[Classification of equational classes of threshold functions]{A complete classification of equational classes of threshold functions   included in clones}
%\title{Finite characterizability of threshold functions included in clones}
% \title[finitely characterizable classes of threshold functions]{A complete classification of finitely characterizable classes of threshold functions induced by Boolean clones}
%Selfdual monotone threshold functions}
% \author{Miguel Couceiro, Erkko Lehtonen, Karsten Sch\"olzel}

\author{Miguel Couceiro}
\address[M. Couceiro]{LAMSADE -- CNRS \\
Universit\'e Paris-Dauphine \\
Place du Mar\'echal de Lattre de Tassigny \\
75775 Paris Cedex 16 \\
France}
\email{miguel.couceiro@dauphine.fr}

\author{Erkko Lehtonen}
\address[E. Lehtonen]{University of Luxembourg \\
Computer Science and Communications Research Unit \\
6, rue Richard Coudenhove-Kalergi \\
L--1359 Luxembourg \\
Luxembourg}
\email{erkko.lehtonen@uni.lu}

\author{Karsten Sch\"olzel}
\address[K. Sch\"olzel]{University of Luxembourg \\
Mathematics Research Unit \\
6, rue Richard Coudenhove-Kalergi \\
L--1359 Luxembourg \\
Luxembourg}
\email{karsten.schoelzel@uni.lu}

%\date{\input{../git-date-rev.tex}}

\begin{abstract}
 The class of threshold functions is known to be characterizable by functional equations or, equivalently, by pairs of relations, which are
 called relational constraints. It was shown by Hellerstein that this class cannot be characterized by a finite number of such objects.
In this paper, we investigate classes of threshold functions which arise as intersections of the class of all threshold functions 
with clones of Boolean functions, and provide a complete classification of such intersections in respect to whether 
they have finite characterizations.
Moreover, we provide a characterizing set of relational constraints for each class of threshold functions arising in this way.

\end{abstract}

% 
% \begin{abstract}
% % It was shown by L.~Hellerstein that
% The class of threshold functions is characterizable by pairs of relations, called relational constraints.
% It was shown by Hellerstein that this class is not characterizable by finitely many constraints.
% In this paper, we investigate classes of threshold functions which arise as intersections of the class of all threshold functions with clones of Boolean functions, and provide a complete classification of such intersections in respect to whether they are finitely characterizable by relational constraints.
% % In particular, we show that among minimal clones, the clone of self-dual monotone functions is the only one with the property that its intersection 
% % with the class of threshold functions is not characterizable by a finite number of relational constraints. As a by-product it follows that only finitely many intersections of the class of threshold functions with clones are finitely characterizable.
% Moreover, we provide a characterizing set of relational constraints for each class of threshold functions arising in this way.
% \end{abstract}

\maketitle

%%%%%%%%%%%%%%%%%%%%%%%%%%%%%%%%%%%%%%%%%%%%%%%%%%%%%%%%%%%%

\section{Introduction and preliminaries}

\subsection{Introduction}

Two approaches to characterize properties of Boolean functions have been considered recently:
one in terms of functional equations \cite{EFHH}, another in terms of relational constraints \cite{Pippenger}.
 As it turns out, these two approaches have the same expressive power in the sense that they characterize 
 the same properties (classes) of Boolean functions, which can be described 
 as initial segments of the so-called ``minor'' relation between functions: for two functions $f$ and $g$ of several variables,
$f$ is said to be a minor of $g$  if $f$ can be obtained from $g$ by identifying variables, permuting variables, or adding inessential variables (see Subsection 
\ref{susec:MinorsConstraints}).
 Furthermore, a class is characterizable by a finite number of functional equations if and only if it is characterizable 
 by a finite number of relational constraints (see, e.g., \cite{CF,Pippenger}).
 For the sake of simplifying the presentation of constructions and proofs, we will focus on the approach by relational constraints.

Several properties of functions can be charaterized by relational constraints (or, equivalently, by functional equations.
In fact, uncountably many properties are expressible by such objects, even in the simplest interesting case of 
functions of several variables, i.e., the Boolean functions (see~\cite{CP,Pippenger}).
Classical examples of such properties include idempotency, monotonicity and linearity.
More contemporary examples include submodularity, supermodularity and the combination of the two, i.e., modularity
(see, e.g., \cite{CouMar,Lovasz,Singer,Topkis}).

Another noteworthy example is thresholdness that is the property of those Boolean functions whose 
true points can be separated from the false points by a hyperplane when considered as elements of the $n$-dimensional real space $\IR^n$.
Threshold functions have been widely studied in the literature on Boolean functions, switching theory, 
system reliability theory, game theory, etc.; for background see, e.g., \cite{Isbell,Muroga,Peleg1,Peleg2,Taylor,Winder}.

Despite being a property expressible by relational constraints, thresholdness cannot be captured by a finite set of relational constraints 
(see Hellerstein~\cite{Hellerstein}). 
However, by imposing additional conditions such as linearity or preservation of componentwise conjunctions or disjunctions of tuples, 
the resulting classes of threshold functions may become characterizable by a finite number of relational constraints. 
In fact, these examples  can be obtained from the class of threshold functions by intersecting it with certain clones, 
namely, those of linear functions, conjunctions and disjunctions, respectively.
(Recall that a clone is a class of functions that contains all projections and is closed under functional composition.)
Another noteworthy and well-known example of such an intersection is
the class of ``majority games'', which results as the intersection with the clone of self-dual monotone functions.
The natural question is then: Is the class of majority games characterizable by a finite number of relational constraints?

In this paper we answer negatively to this question. In fact, we will determine, for each clone of Boolean functions,
whether its intersection with the class of threshold functions is finitely characterizable by relational constraints. 
Moreover, we provide finite or infinite characterizing sets of relational constraints accordingly.

The paper is organized as follows.
In the remainder of this section, we recall basic notions and results that will be needed throughout the paper.
The main results are presented in Section~\ref{sec:main}, in particular, the classification of all intersections $C \cap \thr$, where $C$ is a clone and $\thr$ is the class of all threshold functions, as well as the corresponding characterizing set of relational constraints.
For the reader's convenience, the constructions needed for the main results will be left for Section~\ref{sec:constructions}.
One of the main tools in our proof is Taylor and Zwicker's~\cite{Taylor} theorem on the existence of a $k$-asummable function that is not $(k+1)$-asummable.
In Section~\ref{sec:magic}, we slightly refine Taylor and Zwicker's result and show how the classes of functions characterizable by the relational constraints that arise in our current work are related to each other.
Appendix~\ref{App:Post} provides a list of the clones of Boolean functions and relations characterizing them.

\subsection{Boolean functions}

Throughout the paper, we denote the set $\{1, \dots, n\}$ by $\nset{n}$ and the set $\{0, 1\}$ by $\IB$.
A \emph{Boolean function} is a map $f \colon \IB^n \to \IB$ for some positive integer $n$ called the \emph{arity} of $f$.
Typical examples of Boolean functions include
\begin{itemize}
\item the $n$-ary $i$-th \emph{projection} ($i \in \nset{n}$) $e^{(n)}_i \colon \IB^n \to \IB$, $(a_1, \dots, a_n) \mapsto a_i$;
\item \emph{negation} $\comp{\cdot} \colon \IB \to \IB$, $\comp{0} = 1$, $\comp{1} = 0$;
\item \emph{conjunction} $\wedge \colon \IB^2 \to \IB$, $x \wedge y = 1$ if and only if $x = y = 1$;
\item \emph{disjunction} $\vee \colon \IB^2 \to \IB$, $x \vee y = 0$ if and only if $x = y = 0$;
\item \emph{modulo-$2$ addition} $\oplus \colon \IB^2 \to \IB$, $x \oplus y = (x + y) \bmod 2$.
\end{itemize}
The set of all Boolean functions is denoted by $\Omega$ and the set of all projections is denoted by $I_c$.

The preimage $\True{f}$ of $1$ under $f$ is referred to as the set of \emph{true points}, while $\False{f}$ is referred to as the set of \emph{false points}.

The $i$-th variable of a Boolean function $f \colon \IB^n \to \IB$ is said to be \emph{essential} in $f$, or that $f$ depends on $x_i$, 
if there are $a_1, \ldots, a_{i-1},a_{i+1}, \ldots,a_n \in \IB$ such that
\[
f(a_1, \ldots, a_{i-1},0,a_{i+1}, \ldots, a_n) \neq f(a_1, \ldots, a_{i-1}, 1, a_{i+1}, \ldots, a_n).
\]

The \emph{dual} of a Boolean function $f \colon \IB^n \to \IB$ is the function $\dual{f} \colon \IB^n \to \IB$ given by
\[
\dual{f}(x_1, \dots, x_n)
= \comp{f(\comp{x}_1, \dots, \comp{x}_n)}.
\]
A function $f$ is \emph{self-dual} if $f = \dual{f}$.

If $f \colon \IB^n \to \IB$ and $g_1, \dots, g_n \colon \IB^m \to \IB$, then the \emph{composition} of $f$ with $g_1, \dots, g_n$ is the function $f(g_1, \dots, g_n) \colon \IB^m \to \IB$ given by
\[
f(g_1, \dots, g_n)(\aa) = f(g_1(\aa), \dots, g_n(\aa))
\]
for all $\aa \in \IB^m$.
A \emph{clone} of Boolean functions is a subset $C$ of the set $\Omega$ of all Boolean functions that satisfies the following two conditions:
\begin{itemize}
\item $I_c \subseteq C$, i.e., $C$ contains all projections,
\item if $f \colon \IB^n \to \IB$, $g_1, \dots, g_n \colon \IB^m \to \IB$ and $f, g_1, \dots, g_n \in C$, then $f(g_1, \dots, g_n) \in C$, i.e., $C$ is closed under composition.
\end{itemize}

The clones of Boolean functions were completely described by Post~\cite{Post}, and they are often referred to as \emph{Post's classes.} We provide a list of all clones of Boolean functions in Appendix~\ref{App:Post}.

\subsection{Minors and relational constraints}
\label{susec:MinorsConstraints}
We will denote tuples in $\IB^m$ by boldface letters and their entries with corresponding italic letters, e.g., $\mathbf{a} = (a_1, \dots, a_m)$.
Tuples $\mathbf{a} \in \IB^m$ may be viewed as mappings $\mathbf{a} \colon \nset{m} \to \IB$, $i \mapsto a_i$. 
With this convention, given a map $\sigma \colon \nset{n} \to \nset{m}$, we can write the tuple $(a_{\sigma(1)}, \dots, a_{\sigma(n)})$ as $\mathbf{a} \circ \sigma$, or simply $\mathbf{a} \sigma$.

A function $f \colon \IB^m \to \IB$ is a \emph{minor} of another function $g \colon \IB^n \to \IB$ if there exists a map $\sigma \colon \nset{n} \to \nset{m}$ such that $f(\mathbf{a}) = g(\mathbf{a} \sigma)$ for all $\mathbf{a} \in \IB^m$; in this case we write $f \leq g$.
Functions $f$ and $g$ are \emph{equivalent,} denoted $f \equiv g$, if $f \leq g$ and $g \leq f$.
In other words, $f$ is a minor of $g$ if $f$ can be obtained from $g$ by permutation of arguments, addition and deletion of inessential arguments and identification of arguments.
Functions $f$ and $g$ are equivalent if each one can be obtained from the other by permutation of arguments and addition and deletion of inessential arguments.

% \toberemoved{%
% \begin{fact}\label{fact:equivessential}
% If $f\equiv g$, then $\ess f=\ess g.$ Moreover, if $f\leq g$, then  $f \equiv g$ if and only if $\ess f=\ess g.$
% \end{fact}
% }

The minor relation $\leq$ is a quasi-order (i.e., a reflexive and transitive relation) on the set of all Boolean functions,
and the relation $\equiv$ is indeed an equivalence relation. For further background see, e.g., \cite{CL1,CL2,CP,EFHH,Pippenger}
%\marginpar{MORE References!}

In what follows, we shall consider minors of the following special form.
Let $n \geq 2$, and let $f \colon \IB^n \to \IB$.
For any two-element subset $I$ of $\nset{n}$, we define the function $f_I \colon \IB^{n-1} \to \IB$ by the rule $f_I(\aa) = f(\aa \delta_I)$ for all $\aa \in \IB^{n-1}$, where $\delta_I \colon \nset{n} \to \nset{n-1}$ is given by the rule
\begin{equation}
\label{eq:deltaI}
\delta_I(i) =
\begin{cases}
i, & \text{if $i < \max I$,} \\
\min I, & \text{if $i = \max I$,} \\
i - 1, & \text{if $i > \max I$.}
\end{cases}
\end{equation}
In other words, if $I = \{i, j\}$ with $i < j$, then
\[
f_I(a_1, \dots, a_{n-1}) = f(a_1, \dots, a_{j-1}, a_i, a_j, \dots, a_{n-1}).
\]
Note that $a_i$ occurs twice on the right side of the above equality: both at the $i$-th and at the $j$-th position.
The function $f_I$ will be referred to as an \emph{identification minor} of $f$.

% \toberemoved{%
% In what follows, we shall consider minors of a particular form.
% Let $f \colon \IB^m \to \IB$, let $i, j \in \nset{n}$ ($i \neq j$). The function $f_{i=j} \colon \IB^{n-1} \to \IB$ given by
% \[
% f_{i=j}(a_1, \dots, a_{i-1}, a_{i+1}, \dots, a_n) = f(a_1, \dots, a_{i-1}, a_j, a_{i+1}, \dots, a_n),
% \]
% for all $a_1, \dots, a_{i-1}, a_{i+1}, \dots, a_n \in \IB$, is called an \emph{identification minor} of $f$.
% }

It was shown by Pippenger~\cite{Pippenger} that the classes of functions closed under taking minors are characterizable by so-called relational constraints.
We will briefly survey some results which we will use hereinafter.
An $m$-ary \emph{relational constraint} is a couple $(R, S)$ of $m$-ary relations 
$R$ (the \emph{antecedent}) and $S$ (the \emph{consequent}) on $\IB$ 
(i.e., $R, S \subseteq \IB^m$).  We denote the antecedent and the consequent of a relational constraint 
$Q$ by $R(Q)$ and $S(Q)$, respectively. If both $R(Q)$ and $S(Q)$ equal the binary equality relation, then 
$Q$ is called the binary \emph{equality constraint}. Furthermore, we refer to constraints with empty antecedent and empty consequent 
as \emph{empty constraints}, and to constraints where the antecedent and  consequent are the full relation $\IB^m$, for some $m\geq 1$,
as \emph{full constraints}. The set of all relational constraints is denoted by $\Theta$.

A function $f \colon \IB^n \to \IB$ \emph{preserves} an $m$-ary relational constraint $(R, S)$, denoted $f \triangleright (R, S)$, if for every $\mathbf{a}^{1}, \dots, \mathbf{a}^{n} \in R$, we have $f(\mathbf{a}^1, \dots, \mathbf{a}^n) \in S$. (Regarding tuples $\mathbf{a}^i$ as unary maps, $f(\mathbf{a}^1, \dots, \mathbf{a}^n)$ denotes the $m$-tuple whose $i$-th entry is $f(\mathbf{a}^1, \dots, \mathbf{a}^n)(i) = f(a^1_i, \dots, a^n_i)$.)

The preservation relation gives rise to a Galois connection between functions and relational constraints that we now briefly describe; for further background, 
see~\cite{C,CP,Pippenger}.
Define $\Pol \colon \mathcal{P}(\Theta) \to \mathcal{P}(\Omega)$, $\Inv \colon \mathcal{P}(\Omega) \to \mathcal{P}(\Theta)$ by
\begin{align*}
\Pol(\mathcal{Q}) &= \{f \in \Omega : \text{$f \triangleright Q$ for every $Q \in \mathcal{Q}$}\}, \\
\Inv(\mathcal{F}) &= \{Q \in \Theta : \text{$f \triangleright Q$ for every $f \in \mathcal{F}$}\}.
\end{align*}
We say that a set $\mathcal{F}$ of functions is \emph{characterized} by a set $\mathcal{Q}$ of relational constraints if 
$\mathcal{F} = \Pol(\mathcal{Q})$.
Dually, $\mathcal{Q}$ is \emph{characterized} by $\mathcal{F}$ if $\mathcal{Q} = \Inv(\mathcal{F})$.
In other words, sets of functions characterizable by relational constraints are exactly the fixed points of 
$\Pol \circ \Inv$, and, dually, sets of relational constraints characterizable by functions are exactly the fixed points of $\Inv \circ \Pol$.

\begin{remark}
Preservation of a relational constraint generalizes the notion of preservation of a relation, as in the classical $\mathrm{Pol}$--$\mathrm{Inv}$ theory of clones and relations, which establishes that the clones on finite sets are exactly the classes of functions that are characterized by relations (see~\cite{BKKR,Geiger}).
In this framework, a function $f$ preserves a relation $R$ if and only if $f$ preserves the relational constraint $(R, R)$.
Hence, clones are exactly the classes that are characterized by relational constraints of the form $(R, R)$ for some relation $R$.
\end{remark}

The following result reassembles various descriptions of the Galois closed sets of functions, which can be found in \cite{CP,EFHH,Pippenger}.

\begin{theorem}
\label{thm:characterizablefunctions}
Let $\mathcal{F}$ be a set of functions. The following are equivalent.
\begin{enumerate}[label=\rm (\roman*)]
\item\label{thm:characterizablefunctions:item1}
$\mathcal{F}$ is closed under taking minors.
\item\label{thm:characterizablefunctions:item2}
$\mathcal{F}$ is characterizable by relational constraints.
\item\label{thm:characterizablefunctions:item3}
$\mathcal{F}$ is of the form
\[
\forbid(A) := \{f \in \Omega : \text{$g \nleq f$ for all $g \in A$}\}
\]
for some antichain $A$ with respect to the minor relation $\leq$.
\end{enumerate}
\end{theorem}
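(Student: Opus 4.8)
The plan is to prove the four implications (iii)$\Rightarrow$(i), (ii)$\Rightarrow$(i), (i)$\Rightarrow$(iii), and (iii)$\Rightarrow$(ii), which together yield all the equivalences. The first two are routine. For (iii)$\Rightarrow$(i): if $\mathcal{F} = \forbid(A)$, $f \in \mathcal{F}$ and $h \leq f$, then for each $g \in A$ the relation $g \leq h$ would force $g \leq f$ by transitivity, contradicting $f \in \forbid(A)$; hence $h \in \forbid(A)$, so $\mathcal{F}$ is closed under taking minors. For (ii)$\Rightarrow$(i): if $\mathcal{F} = \Pol(\mathcal{Q})$, it suffices to show that $f \triangleright (R,S)$ and $h(\aa) = f(\aa\sigma)$ (with $f$ of arity $q$, $h$ of arity $p$, and $\sigma \colon \nset{q} \to \nset{p}$) imply $h \triangleright (R,S)$; but for tuples $\aa^1, \dots, \aa^p \in R$ one checks entrywise that $h(\aa^1, \dots, \aa^p) = f(\aa^{\sigma(1)}, \dots, \aa^{\sigma(q)})$, and the latter lies in $S$ since each $\aa^{\sigma(j)}$ is among the $\aa^i \in R$.

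For (i)$\Rightarrow$(iii) I would first record that the minor quasi-order is graded by essential arity: whenever $h \leq g$ via a map $\sigma$, the essential variables of $h$ lie inside the $\sigma$-image of the essential variables of $g$, so $\lvert\ess(h)\rvert \le \lvert\ess(g)\rvert$; and if equality holds, then $\sigma$ restricts to a bijection between the essential variables of $g$ and those of $h$, from which one builds a map witnessing $g \leq h$, so $h \equiv g$. Consequently every strictly $\leq$-descending chain of pairwise inequivalent functions is finite, so the up-set $\mathcal{G} := \Omega \setminus \mathcal{F}$ (an up-set because $\mathcal{F}$ is a down-set) has the property that each of its members lies above one of its minimal elements. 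Taking $A$ to be a set of representatives, of least arity, of the equivalence classes of minimal elements of $\mathcal{G}$ gives an antichain with $\mathcal{F} = \forbid(A)$: membership $f \in \mathcal{F}$ precludes $g \leq f$ for any $g \in A \subseteq \mathcal{G}$, while $f \in \mathcal{G}$ forces $g \leq f$ for the representative $g$ of a minimal element below $f$.

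For (iii)$\Rightarrow$(ii) I would attach to each $g \in A$ of arity $n$ the $2^n$-ary relational constraint $Q_g = (R_g, S_g)$ whose antecedent $R_g = \{\cc_1, \dots, \cc_n\}$ consists of the $n$ ``coordinate columns'' of the $2^n \times n$ array whose rows list all tuples of $\IB^n$, and whose consequent $S_g$ is $\IB^{2^n}$ minus the truth table of $g$. The key point is that, for a $k$-ary function $f$, feeding it a $k$-tuple $(\cc_{\sigma(1)}, \dots, \cc_{\sigma(k)})$ of members of $R_g$ indexed by a map $\sigma \colon \nset{k} \to \nset{n}$ produces exactly the truth table of the $n$-ary minor $\xx \mapsto f(\xx\sigma)$ of $f$; hence $f \triangleright Q_g$ holds precisely when $g$ is not an $n$-ary minor of $f$, i.e., precisely when $g \nleq f$. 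Therefore $\Pol(\{Q_g : g \in A\}) = \forbid(A) = \mathcal{F}$ (taking $\mathcal{Q} = \emptyset$ when $A = \emptyset$).

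I expect the construction in (iii)$\Rightarrow$(ii) to be the main obstacle: one must exhibit, for a single prescribed forbidden minor, a relational constraint whose preservation is exactly the avoidance of that minor, and the right choice is this ``universal'' constraint built from the full table of inputs, so that the columns fed to a function enumerate the truth tables of all its minors of the relevant arity. A secondary point requiring care is the grading-by-essential-arity argument in (i)$\Rightarrow$(iii), which is what guarantees that $\Omega \setminus \mathcal{F}$ contains enough minimal elements to form a characterizing antichain.
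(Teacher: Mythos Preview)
The paper does not provide its own proof of this theorem; it is stated as a known result assembled from \cite{CP,EFHH,Pippenger}. Your proposal is a correct self-contained proof, and it follows the standard route taken in those references: the well-foundedness of the minor quasi-order via essential arity for (i)$\Rightarrow$(iii), and the ``universal'' $2^n$-ary constraint $(R_g,S_g)$ built from the full input table for (iii)$\Rightarrow$(ii), so that applying $f$ to a tuple of columns of $R_g$ produces exactly the truth table of a minor of $f$.

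One small remark on presentation: in (i)$\Rightarrow$(iii), the phrase ``of least arity'' is harmless but unnecessary---any choice of one representative per $\equiv$-class of minimal elements of $\Omega\setminus\mathcal{F}$ already yields an antichain, by minimality. Also, in (iii)$\Rightarrow$(ii) you might make explicit that when $A=\emptyset$ one takes, say, the set consisting of a single full constraint (or the equality constraint) rather than literally $\mathcal{Q}=\emptyset$, since $\Pol(\emptyset)=\Omega$ is indeed $\forbid(\emptyset)$, so this edge case is fine as stated.
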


\begin{remark}
% A class $\mathcal{K}$ is closed under taking minors if and only if $\mathcal{K} \circ I_c \subseteq \mathcal{K}$, where
% \[
% \mathcal{K} \circ I_c =
% \{f(g_1, \dots, g_n) : f \in \mathcal{K}, g_1, \dots, g_n \in I_c\}.
% \]
It follows from the equivalence of \ref{thm:characterizablefunctions:item1} and \ref{thm:characterizablefunctions:item2} in Theorem~\ref{thm:characterizablefunctions} that the union and the intersection of classes that are characterizable by relational constraints are characterizable by relational constraints.
\end{remark}

\begin{remark}
Note that the antichain $A$ in item \ref{thm:characterizablefunctions:item3} of Theorem~\ref{thm:characterizablefunctions} is unique up to equivalence. In fact, 
$A$ can be chosen among the minimal elements of $\Omega \setminus \mathcal{F}$; the elements of $A$ are 
called \emph{minimal forbidden minors for $\mathcal{F}$}.
\end{remark}

As we will see, there are classes of functions that, even though characterizable by relational constraints, are not characterized by any finite set of relational constraints. 
A set of functions is \emph{finitely characterizable} if it is characterized by a finite set of relational constraints.

The following theorem is a refinement of Theorem~\ref{thm:characterizablefunctions} and provides a description for finitely characterizable classes.

\begin{theorem}[\cite{CP,EFHH}]
\label{thm:finitelycharacterizablefunctions}
Let $\mathcal{F}$ be a set of functions. The following are equivalent.
\begin{enumerate}[label=\rm (\roman*)]
\item $\mathcal{F}$ is finitely characterizable.
\item $\mathcal{F}$ is of the form $\forbid(A)$ for some finite antichain $A$ with respect to the minor relation $\leq$.
\end{enumerate}
\end{theorem}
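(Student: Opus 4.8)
The plan is to establish the two implications separately, using Theorem~\ref{thm:characterizablefunctions} for the qualitative part and two short quantitative arguments for the passage between ``finite antichain'' and ``finite set of constraints''. Observe first that $\forbid(A)$ is always closed under taking minors and hence, by Theorem~\ref{thm:characterizablefunctions}, characterizable by \emph{some} set of relational constraints; conversely, by that theorem and the remark following it, every relationally characterizable $\mathcal{F}$ equals $\forbid(A)$ where $A$ is the antichain of minimal elements of $\Omega \setminus \mathcal{F}$ (up to $\equiv$), and $\forbid(A) = \mathcal{F}$ for this $A$. So the whole content of Theorem~\ref{thm:finitelycharacterizablefunctions} is that \emph{finiteness} is preserved in both directions.

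For \textit{(i) $\Rightarrow$ (ii)}, assume $\mathcal{F} = \Pol(\mathcal{Q})$ with $\mathcal{Q}$ finite, and let $m$ be the maximum arity of a constraint in $\mathcal{Q}$. I would show that every minimal element of $\Omega \setminus \mathcal{F}$ has arity at most $2^m$, which forces $A$ to be finite since there are only finitely many Boolean functions of arity $\le 2^m$. The argument is a column-identification: if $g \colon \IB^n \to \IB$ lies in $\Omega \setminus \mathcal{F}$, then $g$ violates some $(R,S) \in \mathcal{Q}$, witnessed by $\aa^1, \dots, \aa^n \in R$ with $g(\aa^1, \dots, \aa^n) \notin S$; at most $\lvert R \rvert \le 2^m$ of the columns $\aa^i$ are pairwise distinct, and writing $\aa^i = \cc^{\sigma(i)}$ for these distinct columns $\cc^1, \dots, \cc^k$ (so $k \le 2^m$) and a suitable $\sigma \colon \nset{n} \to \nset{k}$, the minor $g' \colon \IB^k \to \IB$, $g'(\xx) := g(\xx\sigma)$, again violates $(R,S)$, so $g' \in \Omega \setminus \mathcal{F}$. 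Thus every element of $\Omega \setminus \mathcal{F}$ has a minor of arity $\le 2^m$ in $\Omega \setminus \mathcal{F}$; since a function of bounded arity has only finitely many minors up to $\equiv$, each such element lies above a minimal element of $\Omega \setminus \mathcal{F}$, and all these minimal elements have arity $\le 2^m$.

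For \textit{(ii) $\Rightarrow$ (i)}, assume $\mathcal{F} = \forbid(A)$ with $A = \{g_1, \dots, g_t\}$ finite. I would encode each $g_s \colon \IB^{k_s} \to \IB$ by a single relational constraint $Q_s = (R_s, S_s)$ of arity $2^{k_s}$ such that, for every $f$, $f \triangleright Q_s$ if and only if $g_s \nleq f$; then $\mathcal{Q} := \{Q_1, \dots, Q_t\}$ is finite and $\Pol(\mathcal{Q}) = \forbid(A) = \mathcal{F}$. To define $Q_s$, list $\IB^{k_s} = \{\xx^1, \dots, \xx^{2^{k_s}}\}$, let $\cc^1, \dots, \cc^{k_s} \in \IB^{2^{k_s}}$ be the columns of the matrix whose $r$-th row is $\xx^r$ (these columns are pairwise distinct), let $\yy \in \IB^{2^{k_s}}$ be given by $\yy(r) = g_s(\xx^r)$, and put $R_s := \{\cc^1, \dots, \cc^{k_s}\}$ and $S_s := \IB^{2^{k_s}} \setminus \{\yy\}$. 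The verification rests on the identity: for $f \colon \IB^n \to \IB$ and any $\sigma \colon \nset{n} \to \nset{k_s}$, the $r$-th entry of $f(\cc^{\sigma(1)}, \dots, \cc^{\sigma(n)})$ is $f(\xx^r\sigma)$, so this tuple equals $\yy$ exactly when $f(\xx\sigma) = g_s(\xx)$ for all $\xx \in \IB^{k_s}$, i.e.\ exactly when $g_s \leq f$ via $\sigma$. Since the $n$-tuples over $R_s$ are precisely the tuples $(\cc^{\sigma(1)}, \dots, \cc^{\sigma(n)})$ with $\sigma \colon \nset{n} \to \nset{k_s}$, this yields $f \triangleright Q_s \iff g_s \nleq f$, as desired.

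I expect the main obstacle to be the bookkeeping in \textit{(ii) $\Rightarrow$ (i)}: correctly matching the rows of the truth-table matrix of $g_s$ with the evaluation points of $g_s$ and its columns with the possible identifications $\sigma$, and checking that $R_s$ (with its $k_s$ columns) together with the single ``missing'' tuple $\yy$ in $S_s$ capture \emph{all} ways $g_s$ can arise as a minor of $f$ and \emph{only} those. Once this encoding is in place, the arity bound in \textit{(i) $\Rightarrow$ (ii)} is a routine consequence of the fact that a witness of non-preservation involves only boundedly many distinct columns.
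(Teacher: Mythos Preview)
The paper does not actually prove this theorem; it merely states it and cites \cite{CP,EFHH}. Your argument is correct and is essentially the standard one found in those references: for (i)$\Rightarrow$(ii) you bound the essential arity of every minimal forbidden minor by $2^m$ (where $m$ is the maximal arity of a constraint in $\mathcal{Q}$) via the column-identification trick, and for (ii)$\Rightarrow$(i) you encode each $g_s$ by the single ``truth-table'' constraint whose antecedent consists of the $k_s$ coordinate columns of $\IB^{k_s}$ and whose consequent excludes only the value-column of $g_s$. One tiny wording issue: in (i)$\Rightarrow$(ii) you say ``every minimal element of $\Omega\setminus\mathcal{F}$ has arity at most $2^m$''; strictly this should read ``has essential arity at most $2^m$'' (equivalently, has a representative of arity $\le 2^m$ in its $\equiv$-class), since adding dummy variables keeps a function in $\Omega\setminus\mathcal{F}$ while raising its arity. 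This is already implicit in your use of ``up to $\equiv$'' and does not affect the argument.
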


The Galois closed sets of relational constraints were likewise described by Pippenger~\cite{Pippenger}; this description was 
extended to arbitrary, possibly infinite, underlying sets in \cite{CF}. We shall briefly survey Pippenger's description of the 
Galois closed sets of constraints. 

An $m$-ary relational constraint $(R,S)$ is a \emph{simple minor} of an $(m+p)$-ary relational constraint $(R',S')$ if there is 
$h \colon \{1, \ldots, n\} \to \{1, \ldots, m+p\}$ such that 
\begin{align*} 
  R
   \left(\begin{array}{c}
   x_1\\
   \vdots \\
   x_m\\
   \end{array}\right)
  &
\quad  \iff  \quad 
\exists x_{m+1} \ldots \exists x_{m+p} \quad 
R'
   \left(\begin{array}{c}
   x_{h(1)}\\
   \vdots \\
   x_{h(n)}\\
   \end{array}\right)
\intertext{and}
  S
   \left(\begin{array}{c}
   x_1\\
   \vdots \\
   x_m\\
   \end{array}\right)
  &
\quad \iff \quad 
\exists x_{m+1} \ldots \exists x_{m+p} \quad 
S'
   \left(\begin{array}{c}
   x_{h(1)}\\
   \vdots \\
   x_{h(n)}\\
   \end{array}\right).
 \end{align*}

Note that simple minors subsume the notions of permutation, diagonalization and projection of arguments; for background
see \cite{CF,Pippenger}. 

A constraint $(R,S)$ is obtained from a constraint $(R',S)$ by \emph{restricting the antecedent} if
$R\subseteq R'$. Likewise,  $(R,S)$ is obtained from a constraint $(R,S')$ by \emph{extending the consequent} if
$S\supseteq S'$. A constraint $(R,S\cap S')$ is said to be obtained from $(R,S)$ and $(R, S')$ by intersecting consequents.

A set $\mathcal{Q}$ of relational constraints is said to be \emph{minor-closed} if it contains the binary equality constraint,
the unary empty constraint, and it is closed under taking simple minors, restricting antecedents, and extending  and intersecting consequents.

We can now state Pippenger's \cite{Pippenger} description of the Galois closed sets of relational constraints.

\begin{theorem}
\label{thm:characterizableconstraints}
Let $\mathcal{Q}$ be a set of relational constraints. The following are equivalent.
\begin{enumerate}[label=\rm (\roman*)]
\item\label{thm:characterizableconstraints:item1}
 $\mathcal{Q}$ is characterizable by some set of functions.
\item\label{thm:characterizableconstraints:item2}
 $\mathcal{Q}$ is minor-closed.
 \end{enumerate}
\end{theorem}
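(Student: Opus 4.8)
The plan is to establish the two implications separately, with \ref{thm:characterizableconstraints:item1}~$\Rightarrow$~\ref{thm:characterizableconstraints:item2} as the routine direction. Since every characterizable set of constraints has the form $\Inv(\mathcal{F})$, it suffices to check that $\Inv(\mathcal{F})$ is minor-closed for an arbitrary $\mathcal{F} \subseteq \Omega$, and each required property is immediate from the definition of the preservation relation $\triangleright$: a function is single-valued, hence preserves the binary equality constraint; the unary empty constraint is preserved vacuously; shrinking an antecedent or enlarging a consequent only weakens the defining condition ``$f(\aa^1, \dots, \aa^n) \in S$ for all $\aa^1, \dots, \aa^n \in R$''; intersecting two valid consequents keeps it valid coordinatewise; and passing to a simple minor merely duplicates, permutes, deletes or existentially projects coordinates, which does not change the value of $f$ on the relevant columns of inputs. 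I would spell out the simple-minor case and dispatch the remaining ones in a line each.

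For \ref{thm:characterizableconstraints:item2}~$\Rightarrow$~\ref{thm:characterizableconstraints:item1}, the general properties of the Galois connection induced by $\triangleright$ reduce the task to proving $\Inv(\Pol(\mathcal{Q})) \subseteq \mathcal{Q}$, i.e.\ that every $Q = (R, S) \notin \mathcal{Q}$ is violated by some $f \in \Pol(\mathcal{Q})$. First I would record two facts valid in any minor-closed set: the full constraint $(\IB^k, \IB^k)$ belongs to it for every $k$ (from the binary equality constraint by a simple minor, then by adjoining dummy coordinates), and every constraint with empty antecedent belongs to it (from the unary empty constraint); the latter lets us assume $R = \{\mathbf{r}^1, \dots, \mathbf{r}^n\}$ with $n \geq 1$. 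Mimicking the $\Pol$--$\Inv$ theorem for clones, I form the \emph{canonical} $n$-ary constraint: fix a bijection $\iota \colon \IB^n \to \nset{2^n}$, let $\mathbf{p}^1, \dots, \mathbf{p}^n \in \IB^{2^n}$ be the rows of the $n \times 2^n$ matrix whose $\ell$-th column is $\iota^{-1}(\ell)$ (so $\mathbf{p}^j$ records the $j$-th coordinate over all $n$-ary inputs), and put $\widehat{R} = \{\mathbf{p}^1, \dots, \mathbf{p}^n\}$. Restricting the antecedent of $(\IB^{2^n}, \IB^{2^n})$ gives $(\widehat{R}, \IB^{2^n}) \in \mathcal{Q}$, and finitely many applications of intersecting consequents then yield the smallest $\widehat{S}$ with $(\widehat{R}, \widehat{S}) \in \mathcal{Q}$. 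Finally identify each $n$-ary $f$ with its truth table $\tau_f = f(\mathbf{p}^1, \dots, \mathbf{p}^n) \in \IB^{2^n}$.

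The construction is then completed by two claims. (a)~Every $f$ with $\tau_f \in \widehat{S}$ lies in $\Pol(\mathcal{Q})$: given $Q' = (R', S') \in \mathcal{Q}$ of arity $p$ and any $\aa^1, \dots, \aa^n \in R'$, the tuple $f(\aa^1, \dots, \aa^n)$ equals $\tau_f$ composed with the coordinate map $h \colon \nset{p} \to \nset{2^n}$, $k \mapsto \iota(a^1_k, \dots, a^n_k)$; applying this same $h$ to $(R', S')$ (a simple minor) and then restricting the antecedent --- which contains $\widehat{R}$, since $\mathbf{p}^j \circ h = \aa^j \in R'$ --- produces a constraint in $\mathcal{Q}$ whose consequent therefore contains $\widehat{S}$, so $\tau_f \in \widehat{S}$ forces $f(\aa^1, \dots, \aa^n) \in S'$. (b)~Some $f$ with $\tau_f \in \widehat{S}$ violates $Q$: taking $\aa^j = \mathbf{r}^j$ and writing $h_0 \colon \nset{m} \to \nset{2^n}$ for the associated coordinate map, the image of $(\widehat{R}, \widehat{S})$ under $h_0$ (again a simple minor) is a constraint of the form $(R, S^\flat) \in \mathcal{Q}$; were every $\tau \in \widehat{S}$ to give $\tau \circ h_0 \in S$, we would have $S^\flat \subseteq S$ and hence $(R, S) \in \mathcal{Q}$ after extending the consequent, contradicting $Q \notin \mathcal{Q}$. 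Any $f$ witnessing~(b) is the desired function.

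The main obstacle is claim~(a) and the bookkeeping it rests on: one must check precisely that evaluating an $n$-ary function on rows of $R'$ amounts to selecting the corresponding columns of its truth table, that this selection (with possible repetitions among columns) is genuinely a simple minor of $(R', S')$, that the induced antecedent really contains $\widehat{R}$, and that restricting to $\widehat{R}$ is legitimate; dually for~(b), where $h_0$ need not be injective and the target arity $m$ may exceed $2^n$. Once this dictionary between the preservation relation and simple minors is pinned down --- and the two auxiliary facts about full and empty-antecedent constraints are verified --- the remaining steps, including the easy implication and the reduction via the Galois connection, are purely formal.
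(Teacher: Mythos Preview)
The paper does not prove this theorem; it is stated and attributed to Pippenger~\cite{Pippenger}. Your proposal follows exactly the standard argument from that source: build the canonical constraint $(\widehat{R},\widehat{S})$ for an enumeration of $R$, identify $n$-ary functions with their truth tables, and use the closure operations to transport information between $(\widehat{R},\widehat{S})$ and an arbitrary $(R',S')\in\mathcal{Q}$ or the given $(R,S)\notin\mathcal{Q}$. Claim~(a) is argued correctly: the map $h\colon[p]\to[2^n]$ really is the substitution map in a simple minor of $(R',S')$, the resulting antecedent contains $\widehat{R}$, and restricting plus minimality of $\widehat{S}$ finish the job.

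There is one imprecision in claim~(b). You assert that ``the image of $(\widehat{R},\widehat{S})$ under $h_0$'' is \emph{again a simple minor}. When $h_0\colon[m]\to[2^n]$ fails to be injective (equivalently, when the matrix with rows $\mathbf{r}^1,\dots,\mathbf{r}^n$ has repeated columns), this is false: a single simple minor cannot duplicate a coordinate, as one checks already in the toy case of producing the diagonal $\{(a,a):a\in\IB\}$ from a unary relation. Your map $h_0$ points in the wrong direction for the definition of simple minor; the legitimate simple minor from $(\widehat{R},\widehat{S})$ (projecting onto $\mathrm{range}(h_0)$ and then cylindrifying to arity $m$) yields a consequent strictly larger than $S^\flat=\{\tau\circ h_0:\tau\in\widehat{S}\}$. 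The fix is standard and uses precisely the ingredients you listed but did not invoke here: from the binary equality constraint form the $m$-ary diagonals $(D_{k_1k_2},D_{k_1k_2})\in\mathcal{Q}$ for each pair of equal columns, restrict their antecedents to $R$, and intersect consequents with the cylindrified constraint; the result is exactly $(R,S^\flat)$. Alternatively, one may reduce at the outset to constraints $(R,S)$ whose antecedent has pairwise distinct columns. Either way the conclusion $(R,S^\flat)\in\mathcal{Q}$ stands, and the rest of your argument goes through; only the parenthetical justification needs repair.
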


The following lemma provides a noteworthy tool for showing that certain classes of
threshold functions are not finitely characterizable. 

\begin{lemma} \label{lemma:finiteDescendingChainIffFinitelyCharacterizable}
Let $C$ and $C_i$ for all $i \geq 1$ be classes of functions that are closed under taking minors, such that 
$C  = \bigcap_{i \geq 1} C_i$,
and
$C_{i+1} \subseteq C_i$ for all $i \geq 1$.
% \[
% C  = \bigcap_{i \geq 1} C_i,
% \quad \text{and} \quad
% C_{i+1} \subseteq C_i \text{ for all } i \geq 1.
% \]
If $C$ is finitely characterizable by constraints, then
there exists $\ell \in \IN$ such that $C_j = C_\ell$ for all $j \geq \ell$.
% the family $\{ C_i \mid i \geq 1 \}$ is finite.
\end{lemma}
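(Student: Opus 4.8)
The plan is to argue by contraposition: assuming the chain $(C_i)_{i\geq 1}$ does not stabilize, I will show that $C = \bigcap_{i\geq 1} C_i$ is not finitely characterizable. Since each $C_i$ is closed under taking minors, and $C_{i+1}\subseteq C_i$, the non-stabilization hypothesis means that for infinitely many indices $i$ the inclusion $C_{i+1}\subsetneq C_i$ is strict; passing to a subsequence, I may assume $C_{i+1}\subsetneq C_i$ for \emph{all} $i$. For each such $i$, pick a function $g_i \in C_i \setminus C_{i+1}$. The key observation is that $g_i \notin C$ (because $g_i \notin C_{i+1} \supseteq C$), so each $g_i$ must be ``witnessed'' by some minimal forbidden minor of $C$: by Theorem~\ref{thm:characterizablefunctions}\ref{thm:characterizablefunctions:item3}, $C = \forbid(A)$ for an antichain $A$, and $g_i \notin C$ means there is some $h_i \in A$ with $h_i \leq g_i$.

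Next I want to show these witnesses force $A$ to be infinite. Suppose for contradiction that $A$ is finite; then $C = \forbid(A)$ is finitely characterizable by Theorem~\ref{thm:finitelycharacterizablefunctions}, and some single $h \in A$ satisfies $h \leq g_i$ for infinitely many $i$ (pigeonhole). Fix that $h$; since $C_i$ is closed under taking minors and $g_i \in C_i$, we get $h \in C_i$ for infinitely many $i$, hence for \emph{all} $i$ by the descending chain property, hence $h \in \bigcap_i C_i = C = \forbid(A)$. But $h \in A$, so $h \leq h$ contradicts $h \in \forbid(A)$ (which requires $g \nleq h$ for all $g \in A$, in particular $h \nleq h$, impossible by reflexivity). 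Therefore $A$ is infinite, and by Theorem~\ref{thm:finitelycharacterizablefunctions} (the negation of its equivalence, together with uniqueness of the antichain of minimal forbidden minors up to equivalence), $C$ is \emph{not} finitely characterizable.

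The main subtlety, and the step I would be most careful about, is the passage from ``$A$ finite'' to ``one $h \in A$ witnesses infinitely many $g_i$, and that $h$ lies in $C$'': I need $h \leq g_i$ for infinitely many distinct $g_i$, then invoke minor-closure of each $C_i$ to place $h$ in those $C_i$, then use the \emph{nested} structure to conclude $h \in C_i$ for all $i$ (if $h \in C_i$ and $j \leq i$ then $h \in C_j$ since $C_i \subseteq C_j$; since infinitely many indices $i$ have $h \in C_i$, every index is below one of them). A cleaner route avoiding the pigeonhole on $A$ is available and I would present it instead: assume directly that $C$ is finitely characterizable, say $C = \forbid(A)$ with $A$ a finite antichain of minimal forbidden minors, and show each $C_i \supseteq C$ equals $C$ for $i$ large. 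Indeed $C = \forbid(A) = \bigcap_{h\in A}\forbid(\{h\})$; for each $h \in A$, $h \notin C$ so $h \notin C_{i}$ for some index, hence (by nesting) $h \notin C_i$ for all sufficiently large $i$; taking $\ell$ larger than all these finitely many thresholds gives $A \cap C_\ell = \emptyset$. Then for $j \geq \ell$: $C \subseteq C_j$ always, and conversely if $f \in C_j \setminus C$ then some $h \in A$ has $h \leq f$, so $h \in C_j$ (minor-closure) $\subseteq C_\ell$, contradicting $A \cap C_\ell = \emptyset$; hence $C_j = C = C_\ell$ for all $j \geq \ell$, which is the desired stabilization.
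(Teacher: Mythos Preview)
Your proposal is correct, and the second (``cleaner'') argument you sketch is complete as stated. It differs genuinely from the paper's proof, however. The paper argues on the \emph{constraint} side of the Galois connection: writing $C_i=\Pol\mathcal{Q}_i$ and $C=\Pol\mathcal{P}$ with $\mathcal{P}$ finite, it notes that each $P\in\mathcal{P}$ lies in $\Inv\Pol\bigl(\bigcup_i\mathcal{Q}_i\bigr)$, hence (via Theorem~\ref{thm:characterizableconstraints}) can be built by finitely many closure operations from finitely many constraints of $\bigcup_i\mathcal{Q}_i$; collecting these yields an index $l$ with $\mathcal{P}\subseteq\Inv\Pol\bigl(\bigcup_{i\le l}\mathcal{Q}_i\bigr)$, so $C\supseteq C_l$ and the chain stabilizes. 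You instead stay entirely on the \emph{function} side, using only the forbidden-minor description from Theorems~\ref{thm:characterizablefunctions} and~\ref{thm:finitelycharacterizablefunctions}: if $C=\forbid(A)$ with $A$ finite, each $h\in A$ is missing from some $C_{i_h}$, and beyond $\ell=\max_h i_h$ no $C_j$ contains any element of $A$, which forces $C_j=C$.

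Your route is more elementary and self-contained; it sidesteps the somewhat informal ``all such constructions are finite'' step in the paper, which tacitly relies on the finitary nature of the closure operations in Theorem~\ref{thm:characterizableconstraints}. The paper's route, on the other hand, is thematically closer to the relational-constraint framework the article is built around. One comment on presentation: I would drop your first (contrapositive/pigeonhole) version entirely, since its last step---deducing ``not finitely characterizable'' from ``\emph{this} antichain $A$ is infinite''---does require the uniqueness-up-to-equivalence remark, as you note, whereas your second version needs no such appeal.
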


\begin{proof}
  By Theorem~\ref{thm:characterizablefunctions}, each minor-closed class $C_i$ is characterized by some 
  set $\mathcal{Q}_i$ of relational constraints, i.e., $C_i = \Pol \mathcal{Q}_i$ for all $i \geq 1$.

  Assume that $C$ is finitely characterizable. 
  Then there is some finite set $\mathcal{P}$ of constraints with $C = \Pol \mathcal{P}$, and thus
  $\Inv C = \Inv \Pol \mathcal{P}$. 
  Since 
  \[ C = \bigcap_{i \geq 1} \Pol \mathcal{Q}_i = \Pol \bigcup_{i \geq 1} \mathcal{Q}_i \]
  we can construct each $P \in \mathcal{P}$ from
  the constraints in $\bigcup_{i \geq 1} \mathcal{Q}_i$. Since the constraints are finite, 
  all such constructions are finite. In particular,
  only a finite number of constraints from 
  $\bigcup_{i \geq 1} \mathcal{Q}_i$ are used for each $P \in \mathcal{P}$. Since $\mathcal{P}$ is finite,
  this implies that only a finite number of constraints are needed to construct all $P \in \mathcal{P}$.
  Therefore 
\[
C = \Pol \mathcal{P} \supseteq \Pol \bigcup_{i = 1}^l \mathcal{Q}_i
= \bigcap_{i=1}^l \Pol \mathcal{Q}_i
= \Pol \mathcal{Q}_l = C_l
\]
  holds for some $l \in \IN$. Now this implies that for any $j \geq l$,
\[
C \subseteq C_j \subseteq C_l \subseteq C
\]
  and consequently $C = C_j = C_l$ for all $j \geq l$.
%   Therefore the family $\{ C_i \mid i \geq 1 \}$ is finite.
\end{proof}
%}

%%%%%%%%%%%%%%%%%%%%%%%%%%%%%%%%%%%%%%%%%%%%%%%%%%%%%%%%%%%%

\section{Main results: classification and characterizations of Galois closed sets of threshold functions}
\label{sec:main}

\subsection{Motivation}

A \emph{threshold function} is a Boolean function $f \colon \IB^n \to \IB$ such that there exist \emph{weights} $w_1, \dots, w_n \in \IR$ and a \emph{threshold} $t \in \IR$ fulfilling
\[
f(x_1, \dots, x_n) = 1 \iff \sum_{i=1}^n w_i x_i \geq t.
\]
Another, equivalent, definition is the following.
An $n$-ary Boolean function $f$ is called a \emph{threshold function} if there is a 
hyperplane in $\IR^n$ strictly separating the true points of $f$ from the false points of $f$, considered as elements of $\IR^n$.
The set of all threshold functions is denoted by $\thr$.

The class of threshold functions has remarkable invariance properties.
For instance, it is closed under taking negations and duals (see Lemma~\ref{lem:dualBl}).
% From this observation 
% we immediately get the following noteworthy fact.
%
% \begin{fact}\label{fact:dualthreshold}
% A function $f$ is threshold if and only if $\dual{f}$ is threshold.  
% \end{fact}
% \marginpar{Is Fact~\ref{fact:dualthreshold} needed. We can actually prove that $f \in \Pol B_\ell$ if and only if $\dual{f} \in \Pol B_\ell$.}
%
Moreover, the class of threshold functions is also closed under taking minors of its members; 
hence it is characterizable by relational constraints by Theorem~\ref{thm:characterizablefunctions}.
However, no finite set of relational constraints suffices.

\begin{theorem}[Hellerstein~\cite{Hellerstein}]
\label{thm:Hell}
The class of threshold functions is not finitely characterizable. 
% Consequently, there exists an infinite antichain $A$ such that for every $f \in A$, $f \notin \thr$ and $g \in \thr$ for every $g < f$.
\end{theorem}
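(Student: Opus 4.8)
The plan is to exploit Lemma~\ref{lemma:finiteDescendingChainIffFinitelyCharacterizable} together with a well-chosen descending chain of minor-closed classes whose intersection is exactly $\thr$. Concretely, recall that a Boolean function $f$ is \emph{$k$-asummable} if one cannot find true points $\xx^1, \dots, \xx^p$ and false points $\yy^1, \dots, \yy^q$ of $f$ (with repetitions allowed, $p+q \le k$, $p+q$ even) satisfying $\sum_j \xx^j = \sum_j \yy^j$ as vectors in $\IR^n$; equivalently $f$ is $k$-asummable iff no ``fooling'' linear combination of length at most $k$ exists. A classical theorem (Elgot/Chow, see also Taylor--Zwicker~\cite{Taylor}) states that $f$ is a threshold function if and only if $f$ is $k$-asummable for every $k$. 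So setting $C_k := \{f \in \Omega : f \text{ is $k$-asummable}\}$ we obtain $\thr = \bigcap_{k \ge 2} C_k$ and $C_{k+1} \subseteq C_k$ for all $k$.

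The first step is to verify that each $C_k$ is closed under taking minors. This is routine: adding or deleting inessential variables does not change the multiset of essential coordinates of a point, and identifying two variables corresponds to applying a fixed linear map $\IR^n \to \IR^{n-1}$ to all the points simultaneously, so a length-$\le k$ summability witness for a minor $f_I$ lifts to one for $f$ (the identified coordinate just carries the sum of the two original coordinates). Hence $g \le f$ and $f \in C_k$ imply $g \in C_k$. The second step is to invoke Lemma~\ref{lemma:finiteDescendingChainIffFinitelyCharacterizable}: if $\thr$ were finitely characterizable, the chain $(C_k)_{k \ge 2}$ would stabilize, i.e., $C_k = C_\ell$ for all $k \ge \ell$ and some $\ell$.

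The third and crucial step is to contradict stabilization by producing, for each $k$, a Boolean function that is $k$-asummable but not $(k+1)$-asummable, so that $C_{k+1} \subsetneq C_k$ holds for infinitely many $k$ (indeed all $k$). This is precisely where Taylor and Zwicker's theorem is needed — the excerpt itself flags it as ``one of the main tools in our proof'' — and it is the main obstacle, since constructing such separating functions is the genuinely hard combinatorial content; everything else is bookkeeping. With such functions in hand, $C_{k+1} \ne C_k$ for all $k$, so no $\ell$ as in the Lemma can exist, and therefore $\thr$ cannot be finitely characterizable.

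Alternatively, and perhaps more in keeping with the paper's framing, one can avoid explicitly reproving the Elgot/Chow characterization: it suffices to have \emph{any} chain of minor-closed classes $C_k \supsetneq C_{k+1}$ with $\bigcap_k C_k = \thr$, and the asummability classes are the natural choice because (i) their intersection being $\thr$ is the classical separation theorem and (ii) strictness of the inclusions is exactly the Taylor--Zwicker existence result. I would present the argument in that order: define asummability, note $\thr = \bigcap_k C_k$, check minor-closure of each $C_k$, cite Taylor--Zwicker for strictness, and conclude via Lemma~\ref{lemma:finiteDescendingChainIffFinitelyCharacterizable}. The only subtlety to be careful about is the parity/length conventions in the definition of $k$-asummability, so that ``$k$-asummable for all $k$'' really does coincide with thresholdness and the inclusions are genuinely strict at every step rather than every other step.
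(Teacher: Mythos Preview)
Your proposal is correct and matches the paper's own route to this result (the paper does not prove Theorem~\ref{thm:Hell} in isolation, but recovers it as the case $C=\Omega$ of Theorem~\ref{thm:classification}, via the alternative argument in the Remark following that theorem): define the $k$-asummability classes, note $\thr=\bigcap_k C_k$ by the Elgot--Chow characterization, invoke Taylor--Zwicker for strictness of the chain, and conclude with Lemma~\ref{lemma:finiteDescendingChainIffFinitelyCharacterizable}. The only cosmetic difference is that the paper packages $C_k$ as $\Pol(\mathcal{A}_k)$ for explicit relational constraints $B_2,\dots,B_k$ (Lemma~\ref{lem:k-asummable}), so minor-closure comes for free from the Galois theory rather than from the direct lifting argument you sketch; also, tidy up your definition of $k$-asummability to match the standard one (equal numbers $m\le k$ of true and false points with $\sum \aa_i=\sum \bb_i$), as you yourself flag at the end.
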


Imposing some additional conditions on threshold functions, we may obtain proper subclasses of $\thr$ that are finitely characterizable.
Easy examples arise from the intersections of $\thr$ with the clones $L$, $\Lambda$, $V$ (see Appendix~\ref{App:Post}).
However, as we have seen, other intersections $C \cap T$ may fail to be finitely characterizable, e.g., for $C = \Omega$.

This fact gives rise to the following problem.

\begin{problem}
Which clones $C$ of Boolean functions have the property that $C \cap \thr$ is finitely characterizable?
\end{problem}

In the following subsection we present a solution to this problem.

\subsection{Classification and characterizations of intersections of the class of threshold functions with clones}
% \subsection{Complete classification and corresponding characterizations of subclasses of threshold functions}

We start by observing that 
\[
L \cap \thr = \Omega(1), \qquad
\Lambda \subseteq \thr, \qquad
V \subseteq \thr,
\]
from which it follows that the intersection $C \cap \thr$ is a clone for any clone $C$ contained in one of $L$, $V$ and $\Lambda$.
Hence, the characterization of $C \cap \thr$ for any such clone $C$
is given by the relational constraint $(R, R)$, where $R$ is the relation characterizing $C \cap \thr$ given in Appendix~\ref{App:Post}.

We proceed to characterizing the intersections $C \cap \thr$ for the remaining clones $C$; as we will see, none of these is finitely characterizable.
A characterization of the class $\thr$ of all threshold functions (i.e., for $C = \Omega$) is easily obtained with the help of the notion of asummability.

For $k \geq 2$, a Boolean function $f \colon \IB^n \to \IB$ is \emph{$k$-asummable} if for any $m \in \{2, \dots, k\}$ and for all $\aa_1, \dots, \aa_m \in f^{-1}(0)$ and $\bb_1, \dots, \bb_m \in f^{-1}(1)$, it holds that
\[
\aa_1 + \dots + \aa_m \neq \bb_1 + \dots + \bb_m.
\]
(Addition here is standard vector addition in $\IR^n$.)
A function is \emph{asummable} if it is $k$-asummable for all $k \geq 2$.
It is well known that asummability characterizes threshold functions; see~\cite{Chow,Elgot,Muroga}.

\begin{theorem}
\label{thm:thresholdasummable}
A Boolean function is threshold if and only if it is asummable.
\end{theorem}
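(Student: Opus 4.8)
The plan is to prove the two directions separately, establishing the equivalence of thresholdness and asummability via the classical separation argument.

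\emph{Threshold implies asummable.} Suppose $f \colon \IB^n \to \IB$ is threshold, with weights $w_1, \dots, w_n \in \IR$ and threshold $t \in \IR$, so that $\sum_{i=1}^n w_i a_i \geq t$ for every $\aa \in \True{f}$ and $\sum_{i=1}^n w_i a_i < t$ for every $\aa \in \False{f}$. Fix $m \in \{2, \dots, k\}$ (in fact any $m \geq 2$), and take $\aa_1, \dots, \aa_m \in \False{f}$ and $\bb_1, \dots, \bb_m \in \True{f}$. Writing $\langle \ww, \xx \rangle$ for $\sum_{i=1}^n w_i x_i$, linearity gives $\langle \ww, \aa_1 + \dots + \aa_m \rangle = \sum_{j=1}^m \langle \ww, \aa_j \rangle < mt$, whereas $\langle \ww, \bb_1 + \dots + \bb_m \rangle = \sum_{j=1}^m \langle \ww, \bb_j \rangle \geq mt$. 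Hence $\aa_1 + \dots + \aa_m \neq \bb_1 + \dots + \bb_m$, so $f$ is $k$-asummable for every $k$, i.e., asummable. This direction is routine.

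\emph{Asummable implies threshold.} This is the substantive direction. Suppose $f$ is asummable. The idea is to produce a separating hyperplane by a compactness/convexity argument. Consider the finite sets $\True{f}, \False{f} \subseteq \IB^n \subseteq \IR^n$, and embed points in $\IR^{n+1}$ by appending a constant coordinate $-1$, i.e., associate to $\aa$ the vector $\widehat{\aa} = (a_1, \dots, a_n, -1)$. One seeks $\vv = (w_1, \dots, w_n, t) \in \IR^{n+1}$ with $\langle \vv, \widehat{\bb} \rangle > 0$ for all $\bb \in \True{f}$ and $\langle \vv, \widehat{\aa} \rangle < 0$ for all $\aa \in \False{f}$; equivalently, writing $P = \{\widehat{\bb} : \bb \in \True{f}\} \cup \{-\widehat{\aa} : \aa \in \False{f}\}$, one seeks $\vv$ with $\langle \vv, \pp \rangle > 0$ for all $\pp \in P$. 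By a standard separation theorem (a finite-dimensional Gordan-type alternative, or Farkas' lemma), such a $\vv$ fails to exist precisely when $\00$ lies in the convex hull of $P$, i.e., there exist nonnegative rational coefficients, not all zero, summing to $1$ that yield $\00$ as a convex combination of the points of $P$. Clearing denominators, this gives nonnegative integers and hence a multiset identity of the form $\sum \widehat{\bb_j} = \sum \widehat{\aa_l}$ with equally many terms on each side (the equality of the last coordinates forces the counts to match), where each $\bb_j \in \True{f}$ and each $\aa_l \in \False{f}$, and at least one term is present. Dropping the last coordinate, $\sum_j \bb_j = \sum_l \aa_l$ with $m$ terms on each side for some $m \geq 1$; the asummability hypothesis (applied with the common value $m$, and noting $m \geq 2$ can be arranged since $m=1$ would force a true point equal to a false point, impossible) is contradicted. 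Hence the separating $\vv$ exists, and $f$ is a threshold function.

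The main obstacle is the asummable-implies-threshold direction, and within it the clean invocation of the separation/alternative theorem together with the bookkeeping that turns a rational convex combination witnessing $\00 \in \operatorname{conv}(P)$ into a genuine summability identity with matching multiplicities on the two sides. Care is needed to handle the degenerate case where the witnessing combination uses points from only one of $\True{f}$ or $\False{f}$ (the auxiliary coordinate $-1$ rules this out, since the last coordinates would then all have the same sign and could not cancel), and to reduce to $m \geq 2$ as required by the definition of $k$-asummability. Since $\True{f}$ and $\False{f}$ are finite, no genuine analytic compactness is needed beyond the finite-dimensional convex separation, so once the alternative theorem is quoted the remainder is elementary linear algebra. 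In fact one may simply cite \cite{Chow,Elgot,Muroga}, where this equivalence is established.
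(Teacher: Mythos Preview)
The paper does not prove this theorem at all: it is stated as a well-known characterization with a citation to \cite{Chow,Elgot,Muroga}, and no argument is given. Your sketch via Gordan's alternative is the standard proof and is correct; as you yourself note at the end, one may simply cite those references, which is exactly what the paper does.
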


% The characterization of the class $\thr$ of threshold functions (i.e., for $C=\Omega$), will be derived from the two following technical lemmas.
% The first one is taken from \cite{Hellerstein:2001:GeneralizedConstraints}.

% \begin{lemma}(\cite[p. 39]{Lay})
%   Let $U$ and $V$ be disjoint finite sets of points in $\IR^n$. There exists a hyperplane
%   strictly separating the elements of $U$ from the elements in $V$ if and only if the convex hulls of
%   $U$ and $V$ do not intersect.
% \end{lemma}
% \marginpar{Check references, in particular, Muroga!}

% The following result is an adaptation of Lemma 4.1 in \cite{Hellerstein}.
% \marginpar{Check references, in particular, Muroga!}

% \begin{lemma} \label{lemma:ConvexHullsIntersect}
%   Let $U,V \subseteq \IB^n$ be two finite sets viewed as subsets of $\IR^n$
%   such that the convex hulls of $U$ and $V$ intersect. 
%   Then for some positive integer $p$, there exist $p$ points $\mathbf{u}_1,\dots,\mathbf{u}_p \in U$
%   (not necessarily distinct), and $p$ points $\mathbf{v}_1,\dots,\mathbf{v}_p \in V$ (not necessarily distinct)
%   such that $\mathbf{u}_1 + \dots + \mathbf{u}_p = \mathbf{v}_1 + \dots + \mathbf{v}_p$.
% \end{lemma}

% Lemma~\ref{lemma:ConvexHullsIntersect} gives rise to a set of relational constraints characterizing $\thr$.

Define for $n \geq 1$, the $2n$-ary relational constraint $B_n$ as
\begin{align*}
R(B_n) & := \{ (x_1, \dots, x_{2n}) \in \IB^{2n} : \sum_{i=1}^n x_i = \sum_{i=n+1}^{2n} x_i \} \\
S(B_n) & := \IB^{2n} \setminus \{ (\underbrace{0,\dots,0}_{n},\underbrace{1,\dots,1}_n), 
                                  (\underbrace{1,\dots,1}_{n},\underbrace{0,\dots,0}_n) \}. 
\end{align*}
Note that in the definition of $R(B_n)$ we employ the usual addition of real numbers. Denoting by $w(\aa)$ the \emph{Hamming weight} of a tuple $\aa \in \IB^n$ (i.e., the number of nonzero entries in $\aa$), we can equivalently define $R(B_n)$ as $\{(x_1, \dots, x_{2n}) \in \IB^{2n} : w(x_1, \dots, x_n) = w(x_{n+1}, \dots, x_{2n})\}$.

\begin{lemma}
\label{lem:PolBl}
Let $f \colon \IB^n \to \IB$ and $\ell \geq 2$.
Then $\aa_1 + \dots + \aa_\ell \neq \bb_1 + \dots + \bb_\ell$ for all $\aa_1, \dots, \aa_\ell \in f^{-1}(0)$ and $\bb_1, \dots, \bb_\ell \in f^{-1}(1)$ if and only if $f$ preserves $B_\ell$.
\end{lemma}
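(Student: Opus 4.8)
The plan is to simply unwind the definition of preservation and observe that it matches the stated condition, with the contrapositive doing the final bookkeeping. Write the $n$ argument tuples $\aa^1, \dots, \aa^n \in \IB^{2\ell}$ as the rows of an $n \times 2\ell$ matrix over $\IB$, and let $\cc_1, \dots, \cc_{2\ell} \in \IB^n$ be its columns, so that the $j$-th entry of $f(\aa^1, \dots, \aa^n)$ is precisely $f(\cc_j)$. The first observation is that the condition ``$\aa^i \in R(B_\ell)$ for every $i$'' says exactly that each row satisfies $\sum_{j=1}^\ell a^i_j = \sum_{j=\ell+1}^{2\ell} a^i_j$, and collecting these $n$ scalar equalities into one vector equation in $\IR^n$ gives $\cc_1 + \dots + \cc_\ell = \cc_{\ell+1} + \dots + \cc_{2\ell}$.

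The second observation concerns the consequent: the output tuple $f(\aa^1, \dots, \aa^n) = (f(\cc_1), \dots, f(\cc_{2\ell}))$ lies outside $S(B_\ell)$ if and only if it equals $(0, \dots, 0, 1, \dots, 1)$ or $(1, \dots, 1, 0, \dots, 0)$; the former says $\cc_1, \dots, \cc_\ell \in f^{-1}(0)$ and $\cc_{\ell+1}, \dots, \cc_{2\ell} \in f^{-1}(1)$, and the latter says the reverse. Combining the two observations, $f$ fails to preserve $B_\ell$ if and only if there are tuples $\cc_1, \dots, \cc_{2\ell} \in \IB^n$ with $\cc_1 + \dots + \cc_\ell = \cc_{\ell+1} + \dots + \cc_{2\ell}$ that split either as ($\ell$ false points of $f$ followed by $\ell$ true points) or as ($\ell$ true points followed by $\ell$ false points). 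Relabelling the false points as $\aa_1, \dots, \aa_\ell$ and the true points as $\bb_1, \dots, \bb_\ell$ in the first case, and symmetrically in the second (using that $\sum_k \aa_k = \sum_k \bb_k$ is unaffected by swapping its two sides), we see that this happens if and only if there exist $\aa_1, \dots, \aa_\ell \in f^{-1}(0)$ and $\bb_1, \dots, \bb_\ell \in f^{-1}(1)$ with $\aa_1 + \dots + \aa_\ell = \bb_1 + \dots + \bb_\ell$.

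Taking the contrapositive of this equivalence yields the lemma: the inequality $\aa_1 + \dots + \aa_\ell \neq \bb_1 + \dots + \bb_\ell$ holds for all such choices precisely when $f$ preserves $B_\ell$. There is no genuine obstacle here; the only point requiring a moment's care is the transposition between the ``argument tuples'' (rows) appearing in the definition of preservation and the ``columns'' on which $f$ is actually evaluated. The symmetry of the forbidden set $\IB^{2\ell} \setminus S(B_\ell)$ under swapping the first $\ell$ and last $\ell$ coordinates mirrors the symmetry of the asummability-type condition under exchanging the $\aa$'s and $\bb$'s, so the two forbidden tuples both lead to the same conclusion and no separate case analysis is ultimately needed.
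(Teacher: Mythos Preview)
Your proof is correct and follows essentially the same approach as the paper: both set up the $2\ell \times n$ (or transposed $n \times 2\ell$) matrix, identify ``columns in $R(B_\ell)$'' with the vector equation $\sum_{j \le \ell} \cc_j = \sum_{j > \ell} \cc_j$, and identify ``output outside $S(B_\ell)$'' with the rows splitting into $\ell$ false points and $\ell$ true points (using the swap symmetry to merge the two forbidden tuples into one case). The only cosmetic difference is that you prove the biconditional of the negations in one pass, whereas the paper treats the two implications separately.
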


\begin{proof}
Assume first that $f$ does not preserve $B_\ell$. Then there exists a matrix
\[
M = \begin{pmatrix}
  m_1^1 & m_2^1 & \dots & m_n^1 \\
  m_1^2 & m_2^2 & \dots & m_n^2 \\
  \vdots & \vdots &  & \vdots \\
  m_1^{2 \ell} & m_2^{2 \ell} & \dots & m_n^{2 \ell}
\end{pmatrix} = \begin{pmatrix} 
  M^1 \\
  M^2 \\
  \vdots \\
  M^{2 \ell}
\end{pmatrix}
= (M_1, M_2, \dots, M_n),
\]
i.e., $M^1, \dots, M^{2 \ell} \in \IB^n$ are the rows of $M$, 
and $M_1, \dots, M_n \in \IB^{2 \ell}$ are the columns of $M$, such that
\begin{itemize}
\item $M_1, \dots, M_n \in R(B_\ell)$, and
\item $\zz := g(M_1, \dots, M_n) := \begin{pmatrix} g(M^1) \\ \vdots \\ g(M^{2 \ell}) \end{pmatrix} \notin S(B_\ell)$.
\end{itemize}
Thus $\zz \in \{ (\underbrace{0,\dots,0}_l,\underbrace{1,\dots,1}_l), 
(\underbrace{1,\dots,1}_l,\underbrace{0,\dots,0}_l)\}$.
As $B_\ell$ is invariant under swapping the first $\ell$ rows with the last $\ell$ rows, we can
assume that $\zz = (\underbrace{0, \dots, 0}_\ell, \underbrace{1, \dots, 1}_\ell)$.
Then $M^1, \dots, M^\ell \in f^{-1}(0)$ and $M^{\ell + 1}, \dots, M^{2 \ell} \in f^{-1}(1)$, and
$M^1 + \dots + M^\ell = M^{\ell + 1} + \dots + M^{2 \ell}$ by the definition of $B_\ell$.

Assume then that there exist $\aa_1, \dots, \aa_\ell \in f^{-1}(0)$ and $\bb_1, \dots, \bb_\ell \in f^{-1}(1)$ such that $\aa_1 + \dots + \aa_\ell = \bb_1 + \dots + \bb_\ell$.
Let $M$ be the $2 \ell \times n$ matrix whose rows are $\aa_1, \dots, \aa_\ell, \bb_1, \dots, \bb_\ell$. The columns of $M$ are tuples in $R(B_\ell)$, but $f(M) = (\underbrace{0, \dots, 0}_\ell, \underbrace{1, \dots, 1}_\ell) \notin S(B_\ell)$. We conclude that $f$ does not preserve $B_\ell$.
\end{proof}

Now it is easy to define a set of relational constraints that characterizes $k$-asummable functions.
For $k \geq 2$, let $\mathcal{A}_k := \{B_n : 2 \leq n \leq k\}$.

\begin{lemma}
\label{lem:k-asummable}
Let $k \geq 2$. A Boolean function $f$ is $k$-asummable if and only if $f \in \Pol(\mathcal{A}_k)$.
\end{lemma}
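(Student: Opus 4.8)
The plan is to derive the statement directly from the definitions together with Lemma~\ref{lem:PolBl}, essentially by unwinding both sides and observing that they match constraint-by-constraint. Recall that $f$ is $k$-asummable precisely when, for every $m\in\{2,\dots,k\}$, there are no $\aa_1,\dots,\aa_m\in f^{-1}(0)$ and $\bb_1,\dots,\bb_m\in f^{-1}(1)$ with $\aa_1+\dots+\aa_m=\bb_1+\dots+\bb_m$. On the other hand, $f\in\Pol(\mathcal{A}_k)$ means that $f$ preserves $B_n$ for every $n$ with $2\le n\le k$. So the statement amounts to the equivalence: for all $m\in\{2,\dots,k\}$, the ``no bad $m$-sum'' condition holds $\iff$ for all $n\in\{2,\dots,k\}$, $f\triangleright B_n$.

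The key step is to quantify over a single index. First I would fix $n$ with $2\le n\le k$ and apply Lemma~\ref{lem:PolBl} with $\ell=n$: this gives that $f\triangleright B_n$ if and only if $\aa_1+\dots+\aa_n\neq\bb_1+\dots+\bb_n$ for all $\aa_1,\dots,\aa_n\in f^{-1}(0)$ and $\bb_1,\dots,\bb_n\in f^{-1}(1)$. Then I would simply conjoin these equivalences over all $n\in\{2,\dots,k\}$: the conjunction of the right-hand sides over $n=2,\dots,k$ is literally the definition of $k$-asummability, and the conjunction of the left-hand sides over $n=2,\dots,k$ is literally the definition of membership in $\Pol(\mathcal{A}_k)=\Pol(\{B_n:2\le n\le k\})$. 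This yields the claimed equivalence.

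I do not anticipate a genuine obstacle here; the lemma is essentially a bookkeeping consequence of Lemma~\ref{lem:PolBl}. The only points that warrant a sentence of care are (i) noting that the range $m\in\{2,\dots,k\}$ in the definition of $k$-asummability matches exactly the range $n\in\{2,\dots,k\}$ indexing $\mathcal{A}_k$, so no off-by-one issue arises, and (ii) recalling that $f\triangleright Q$ for every $Q$ in a set is by definition membership in $\Pol$ of that set, so $f\in\Pol(\mathcal{A}_k)$ unfolds to ``$f\triangleright B_n$ for all $n\in\{2,\dots,k\}$'' with no further work. Thus the proof is a short two-line argument: apply Lemma~\ref{lem:PolBl} termwise and take the conjunction over the index set.
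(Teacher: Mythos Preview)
Your proposal is correct and is exactly the paper's approach: the paper's proof reads ``Follows immediately from the definition of $k$-asummability and Lemma~\ref{lem:PolBl},'' and your write-up simply makes this immediate implication explicit by applying Lemma~\ref{lem:PolBl} for each $n\in\{2,\dots,k\}$ and conjoining.
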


\begin{proof}
Follows immediately from the definition of $k$-asummability and Lemma~\ref{lem:PolBl}.
\end{proof}

\begin{corollary}
Let $f \colon \IB^n \to \IB$. The following are equivalent.
\begin{enumerate}[label=\rm (\roman*)]
\item\label{cor:thr:item:thr}
$f$ is a threshold function.
\item\label{cor:thr:item:Ak}
$f \in \bigcap_{k \geq 2} \Pol(\mathcal{A}_k)$.
\item\label{cor:thr:item:Bn}
$f \in \Pol (\{B_n : n \geq 2\})$.
\end{enumerate}
% A Boolean function is threshold if and only if it preserves $B_n$ for all $n \geq 2$.
\end{corollary}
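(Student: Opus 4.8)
The plan is to deduce the corollary directly from the preceding results, with essentially no new computation required. The equivalence of \ref{cor:thr:item:thr} and \ref{cor:thr:item:Ak} is the main point; the equivalence with \ref{cor:thr:item:Bn} is then a formal manipulation of the $\Pol$ operator.

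First I would establish \ref{cor:thr:item:thr} $\iff$ \ref{cor:thr:item:Ak}. By Theorem~\ref{thm:thresholdasummable}, $f$ is a threshold function if and only if $f$ is asummable, which by definition means $f$ is $k$-asummable for every $k \geq 2$. By Lemma~\ref{lem:k-asummable}, $f$ is $k$-asummable if and only if $f \in \Pol(\mathcal{A}_k)$. Quantifying over all $k \geq 2$, this says $f$ is asummable if and only if $f \in \bigcap_{k \geq 2} \Pol(\mathcal{A}_k)$, which is exactly \ref{cor:thr:item:Ak}.

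Next I would show \ref{cor:thr:item:Ak} $\iff$ \ref{cor:thr:item:Bn}. This is the standard fact that $\Pol$ turns unions into intersections: for any family $\{\mathcal{Q}_j\}$ of sets of relational constraints, $\Pol\bigl(\bigcup_j \mathcal{Q}_j\bigr) = \bigcap_j \Pol(\mathcal{Q}_j)$, since $f$ preserves every constraint in the union precisely when it preserves every constraint in each $\mathcal{Q}_j$. Applying this with $\mathcal{Q}_k = \mathcal{A}_k$ and noting that $\bigcup_{k \geq 2} \mathcal{A}_k = \{B_n : n \geq 2\}$ (because $\mathcal{A}_k = \{B_n : 2 \le n \le k\}$ and these sets are nested and exhaust all $B_n$), we get
\[
\bigcap_{k \geq 2} \Pol(\mathcal{A}_k) = \Pol\Bigl(\bigcup_{k \geq 2} \mathcal{A}_k\Bigr) = \Pol(\{B_n : n \geq 2\}),
\]
so \ref{cor:thr:item:Ak} and \ref{cor:thr:item:Bn} describe the same set of functions.

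There is no real obstacle here: the only thing to be slightly careful about is making the union-to-intersection identity explicit, and confirming that $\bigcup_{k\geq 2}\mathcal{A}_k$ really equals $\{B_n : n \geq 2\}$, which is immediate from the definition $\mathcal{A}_k = \{B_n : 2 \leq n \leq k\}$. All the substantive work — the characterization of threshold functions via asummability and the translation of $k$-asummability into preservation of the $B_n$ — has already been done in Theorem~\ref{thm:thresholdasummable}, Lemma~\ref{lem:PolBl}, and Lemma~\ref{lem:k-asummable}.
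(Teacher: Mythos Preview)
Your proposal is correct and follows essentially the same approach as the paper: the equivalence of \ref{cor:thr:item:thr} and \ref{cor:thr:item:Ak} is obtained from Theorem~\ref{thm:thresholdasummable} and Lemma~\ref{lem:k-asummable}, and the equivalence of \ref{cor:thr:item:Ak} and \ref{cor:thr:item:Bn} is the same chain $\bigcap_{k \geq 2} \Pol(\mathcal{A}_k) = \Pol\bigl(\bigcup_{k \geq 2} \mathcal{A}_k\bigr) = \Pol(\{B_n : n \geq 2\})$ that the paper writes down.
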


\begin{proof}
The equivalence of \ref{cor:thr:item:thr} and \ref{cor:thr:item:Ak} follows immediately from Theorem~\ref{thm:thresholdasummable} and Lemma~\ref{lem:k-asummable}. Conditions \ref{cor:thr:item:Ak} and \ref{cor:thr:item:Bn} are equivalent, because
\[
\bigcap_{k \geq 2} \Pol(\mathcal{A}_k)
= \Pol \Bigl( \bigcup_{k \geq 2} \mathcal{A}_k \Bigr)
= \Pol \Bigl( \bigcup_{k \geq 2} \{B_n : 2 \leq n \leq k\} \Bigr)
= \Pol (\{B_n : n \geq 2\}).
\qedhere
\]
\end{proof}

Since $\mathcal{A}_k \subseteq \mathcal{A}_k \cup \{B_{k+1}\} = \mathcal{A}_{k+1}$, it is clear that $\Pol(\mathcal{A}_{k+1}) \subseteq \Pol(\mathcal{A}_k)$ for all $k \geq 2$.
Taylor and Zwicker have shown in \cite{Taylor} that for every $k \geq 2$, there exist $k$-asummable functions that are not $(k+1)$-asummable. 
Hence these inclusions are strict for every $k$.

\begin{theorem}
\label{theorem:strictAkInclusions}
For all $k \geq 2$, $\Pol(\mathcal{A}_{k+1}) \subset \Pol(\mathcal{A}_k)$.
\end{theorem}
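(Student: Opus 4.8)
The plan is to reduce the statement to the combinatorial separation of $k$-asummability from $(k+1)$-asummability. Half of the claim is already in hand: since $\mathcal{A}_k \subseteq \mathcal{A}_{k+1}$ and $\Pol$ is antitone, we have $\Pol(\mathcal{A}_{k+1}) \subseteq \Pol(\mathcal{A}_k)$, as noted in the paragraph preceding the statement. So the whole of the remaining work lies in exhibiting, for the given $k$, a single function that lies in $\Pol(\mathcal{A}_k)$ but not in $\Pol(\mathcal{A}_{k+1})$.

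For this I would invoke Taylor and Zwicker's theorem~\cite{Taylor}, which guarantees, for each $k \geq 2$, a Boolean function $g \colon \IB^n \to \IB$ that is $k$-asummable but not $(k+1)$-asummable. Feeding this into Lemma~\ref{lem:k-asummable} in the positive direction gives $g \in \Pol(\mathcal{A}_k)$. For the negative direction, the failure of $(k+1)$-asummability means there exist $\aa_1, \dots, \aa_{k+1} \in g^{-1}(0)$ and $\bb_1, \dots, \bb_{k+1} \in g^{-1}(1)$ with $\aa_1 + \dots + \aa_{k+1} = \bb_1 + \dots + \bb_{k+1}$; by Lemma~\ref{lem:PolBl} this is precisely the assertion that $g$ does not preserve $B_{k+1}$, and since $B_{k+1} \in \mathcal{A}_{k+1}$ we conclude $g \notin \Pol(\mathcal{A}_{k+1})$. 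Combined with the inclusion from the first paragraph, this yields $\Pol(\mathcal{A}_{k+1}) \subset \Pol(\mathcal{A}_k)$, completing the argument.

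The single genuine obstacle here is the existence of the separating function $g$, i.e., Taylor and Zwicker's result, which we take as a black box; the introduction already signals that a refinement of it will be provided in Section~\ref{sec:magic}. If one wanted a self-contained argument, one would instead have to reproduce (or sharpen) their construction of such a $g$ directly. For the present theorem, however, everything beyond that single input is a mechanical translation through the Galois correspondence packaged in Lemmas~\ref{lem:PolBl} and~\ref{lem:k-asummable}, so no further difficulty is expected.
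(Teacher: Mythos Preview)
Your proposal is correct and matches the paper's own approach: the paper simply cites Taylor and Zwicker for the existence of a $k$-asummable function that is not $(k+1)$-asummable and appeals to Lemma~\ref{lem:k-asummable}, exactly as you do. One minor remark: for the negative direction you could have invoked Lemma~\ref{lem:k-asummable} directly (not $(k+1)$-asummable $\iff$ $g \notin \Pol(\mathcal{A}_{k+1})$) rather than going through Lemma~\ref{lem:PolBl}; your detour is still correct because $k$-asummability of $g$ forces any summability failure to occur at level $m = k+1$, but it is worth making that step explicit.
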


\begin{theorem}
\label{thm:characterization}
The set $\Pol (\{ B_n : n \geq 2 \})$ is the class of all threshold functions. 
Moreover, for every clone $C$, the subclass $C \cap \thr$ of threshold functions is characterized by the set 
$\{B_n : n \geq 2\} \cup \mathcal{Q}_C$, where $\mathcal{Q}_C$ is the set of relational constraints characterizing the clone $C$, as given in Appendix~\ref{App:Post}.
\end{theorem}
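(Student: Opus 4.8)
The plan is to combine the characterization of the class $\thr$ of threshold functions by the relational constraints $\{B_n : n \geq 2\}$ with the classical $\Pol$--$\Inv$ description of clones, using only the fact that $\Pol$ turns unions of constraint sets into intersections of function classes.

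First, the statement that $\Pol(\{B_n : n \geq 2\}) = \thr$ is exactly the equivalence of \ref{cor:thr:item:thr} and \ref{cor:thr:item:Bn} in the Corollary preceding this theorem, so that half is already done. Next I would invoke the Remark following Theorem~\ref{thm:characterizablefunctions}: clones are precisely the classes characterized by relational constraints of the form $(R,R)$, and Appendix~\ref{App:Post} supplies, for each clone $C$, a set $\mathcal{Q}_C$ of such constraints with $C = \Pol(\mathcal{Q}_C)$. Then the key computation is simply
\[
C \cap \thr = \Pol(\mathcal{Q}_C) \cap \Pol(\{B_n : n \geq 2\}) = \Pol\bigl(\mathcal{Q}_C \cup \{B_n : n \geq 2\}\bigr),
\]
where the second equality is the standard fact that $\Pol$ sends a union of sets of constraints to the intersection of the corresponding $\Pol$-classes (a function preserves every constraint in $\mathcal{Q}_C \cup \{B_n : n \geq 2\}$ if and only if it preserves every constraint in $\mathcal{Q}_C$ and every constraint in $\{B_n : n \geq 2\}$). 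This is precisely the assertion that $C \cap \thr$ is characterized by $\{B_n : n \geq 2\} \cup \mathcal{Q}_C$.

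There is essentially no obstacle here: the theorem is a bookkeeping consequence of the preceding Corollary together with the elementary Galois-theoretic identity $\Pol(\mathcal{Q}_1 \cup \mathcal{Q}_2) = \Pol(\mathcal{Q}_1) \cap \Pol(\mathcal{Q}_2)$. The only mild point worth spelling out is that the constraints listed in Appendix~\ref{App:Post} for a clone $C$ are indeed of the relation-preservation form $(R,R)$ and genuinely satisfy $\Pol(\mathcal{Q}_C) = C$ (this is the classical Pol--Inv correspondence for clones of Post, cf.\ \cite{BKKR,Geiger}, transcribed into the language of relational constraints via the Remark after Theorem~\ref{thm:characterizablefunctions}); once that is granted, the proof is a one-line substitution. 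I would therefore present the argument in two short steps — quote the Corollary for $\thr$, quote Appendix~\ref{App:Post} for $C$, and conclude by the union-to-intersection identity — with no computation beyond the displayed chain of equalities above.
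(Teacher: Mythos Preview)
Your proposal is correct and matches the paper's (implicit) reasoning: the paper states Theorem~\ref{thm:characterization} without a separate proof, treating it as an immediate consequence of the preceding Corollary together with the identity $\Pol(\mathcal{Q}_1 \cup \mathcal{Q}_2) = \Pol(\mathcal{Q}_1) \cap \Pol(\mathcal{Q}_2)$ and the characterizations in Appendix~\ref{App:Post}. There is nothing to add.
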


%\toberemoved{

\begin{remark}
From Theorems  \ref{theorem:strictAkInclusions} and \ref{thm:characterization} it follows that
\[
\thr = \bigcap_{k \geq 2} \Pol(\mathcal{A}_k) \subset \dots \subset \Pol(\mathcal{A}_{\ell+1}) \subset \Pol(\mathcal{A}_\ell) \subset \dots \subset \Pol(\mathcal{A}_2)
\]
holds for all $\ell \geq 3$, i.e., the sets $\Pol(\mathcal{A}_k)$ with $k \geq 2$ form an infinite descending chain, whose
intersection is the set $\thr$ of all threshold functions.
 \end{remark}
%}

Theorem~\ref{thm:characterization} provides an infinite set of relational constraints characterizing the set $C \cap \thr$ for each clone $C$. 
As Theorem~\ref{thm:classification} will reveal, the characterization provided is optimal for 
the clones not contained in $L$, $V$ or $\Lambda$ in the sense that for such clones $C$, 
the set $C \cap \thr$ is not finitely characterizable by relational constraints.

In order to proceed, we need the following lemma. Its proof is somewhat technical and is deferred to Section~\ref{sec:constructions}.

\begin{lemma}
\label{lem:GCf}

Let $f$ be a Boolean function, and let $C \in \{SM, M_cU_\infty, M_cW_\infty\}$. There exists a Boolean function $G_C(f)$ that satisfies the following conditions:
\begin{enumerate}[label=\rm (\roman*)]
\item $G_C(f) \in C$,
\item for all $n \geq 2$, $f \in \Pol B_n$ if and only if $G_C(f) \in \Pol B_n$.
\end{enumerate}
\end{lemma}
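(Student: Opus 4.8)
The plan is to construct $G_C(f)$ explicitly for each of the three target clones $SM$, $M_cU_\infty$, $M_cW_\infty$ by a uniform ``gadget'' construction that forces membership in the clone while preserving the relevant asummability information. The guiding idea is that $B_n$-preservation is controlled entirely by the additive relations among true points and false points (Lemma~\ref{lem:PolBl}); so if we can build $G_C(f)$ so that its set of $(0,1)$-weighted additive dependencies between false and true points ``mirrors'' those of $f$ exactly, then condition (ii) follows. The construction should attach to $f$ enough extra coordinates, set up so that the clone-defining relations (self-duality plus monotonicity for $SM$; membership in $U_\infty$ together with preservation of $\le$ for $M_cU_\infty$; dually for $M_cW_\infty$) are automatically satisfied.

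\textbf{Key steps.} First I would recall the relational descriptions of the three clones from Appendix~\ref{App:Post}: $SM$ is the clone of self-dual monotone functions, $M_cU_\infty$ is the clone of monotone functions preserving $0,1$ and lying in $U_\infty$ (the ``$\infty$-ary near-unanimity-type'' class, i.e.\ functions with no two disjoint true-point-complements, equivalently those preserving all the relations $\{(\aa_1,\dots,\aa_k)\colon \text{some coordinate is }1\}$ up to the appropriate encoding), and $M_cW_\infty$ its dual. Second, for the self-dual monotone case I would define $G_{SM}(f)$ on $\IB^{n} \times \IB^{n} \times \IB$ (or a similar doubled domain with a tie-breaking bit), roughly by $G_{SM}(f)(\xx,\yy,z) = 1$ iff ($f(\xx)=1$ and $\dual f(\yy) = 1$) adjudicated by $z$ in the ``diagonal'' case, so that self-duality holds by construction; monotonicity would be arranged by taking, if necessary, the monotone closure, which one checks does not create new additive dependencies of the forbidden type because $f$ is already being probed only through the genuine $0/1$ points. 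Third, for $M_cU_\infty$ and $M_cW_\infty$ I would similarly prepend or append coordinates acting as ``guards'' that are all-$1$ on true points and realize the $U_\infty$ (resp.\ $W_\infty$) constraint trivially, while monotonicity and $0,1$-preservation are handled by padding. Fourth — and this is where the real work lies — for each construction I would verify the biconditional in (ii): given a violation of $B_n$ by $f$, lift the witnessing tuples to witnesses for $G_C(f)$; conversely, given a violation of $B_m$ by $G_C(f)$ for some $m$, project the witnessing matrix back down through the added coordinates and argue that the extra coordinates contribute the same Hamming weight on both sides (so they cancel in the $R(B_m)$ condition) and that the $0$-block / $1$-block structure of the consequent violation is inherited by $f$, possibly after deleting ``degenerate'' rows and re-indexing $m$.

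\textbf{Main obstacle.} The hard part will be the ``conversely'' direction of step four: controlling what happens when a $B_m$-violation of $G_C(f)$ uses rows that sit on the auxiliary coordinates in mixed ways, or uses the tie-breaking bit nontrivially, so that the projection to $f$-coordinates does not immediately give a clean $B_{m'}$-violation. One expects to need a careful case analysis showing that any such violation can be massaged — by discarding paired rows that cancel, by using self-duality to flip a block, or by merging the auxiliary weights into a single common offset — into an honest additive dependency $\aa_1 + \dots + \aa_{m'} = \bb_1 + \dots + \bb_{m'}$ with all $\aa_i \in f^{-1}(0)$, all $\bb_i \in f^{-1}(1)$, and $m' \ge 2$, so that Lemma~\ref{lem:PolBl} applies. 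A secondary subtlety is ensuring that the monotone closure step (needed to land inside $SM$ or $M_c(\cdot)$) genuinely introduces no new forbidden additive dependency; the cleanest way is probably to design the gadget so that it is already monotone, avoiding the closure entirely, at the cost of a slightly more elaborate definition. Since the lemma statement says its proof is ``somewhat technical and deferred to Section~\ref{sec:constructions},'' I would expect the full argument to require an explicit, separately-treated construction for each of the three clones rather than one uniform formula.
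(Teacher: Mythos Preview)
Your high-level plan---build an explicit gadget for each target clone and verify the $B_n$-equivalence via Lemma~\ref{lem:PolBl}---is exactly the paper's strategy, but two of your concrete ideas would not deliver the lemma as stated. First, the biconditional in (ii) is for \emph{each fixed} $n$: you must show that a $B_m$-violation of $G_C(f)$ yields a $B_m$-violation of $f$ for the \emph{same} $m$. Your plan to ``delete degenerate rows and re-index $m$'' would at best produce a $B_{m'}$-violation with $m' < m$, and since the classes $\Pol B_n$ are not linearly ordered (Section~\ref{sec:magic} shows $\Pol B_\ell \subseteq \Pol B_m$ only when $m \mid \ell$), this does not give $f \notin \Pol B_m$. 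The paper's constructions are designed so that no row ever needs to be discarded: every witnessing matrix for $G_C(f)$ is transformed, row by row, into a witnessing matrix for $f$ of the same height $2m$.

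Second, ``monotone closure'' is the wrong tool for forcing monotonicity; it can easily create new additive coincidences between true and false points and there is no reason the $B_\ell$-preservation survives. The paper avoids this by a modular approach: it defines elementary operators $G_S$ (add one coordinate, self-dualize via $x_{n+1}\wedge f \vee \overline{x}_{n+1}\wedge \dual f$), $G_{M_c}$ (double the coordinates and define the value by Hamming weight except on the middle layer $w(\xx)=n$, where $f$ is encoded on tuples of shape $(\aa,\overline{\aa})$), and $G_{U_\infty}$ (conjoin with a fresh coordinate). Each of these is already monotone/self-dual/$1$-separating by construction, and each is shown---with a nontrivial counting argument in the $G_{M_c}$ case forcing all rows of a violating matrix to lie on the middle layer and in fact to have shape $(\aa,\overline{\aa})$---to preserve $\Pol B_\ell$ exactly for every $\ell$. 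The three target functions are then obtained by composition: $G_{SM}=G_{M_c}\circ G_S$, $G_{M_cU_\infty}=G_{U_\infty}\circ G_{M_c}$, and $G_{M_cW_\infty}=\dual{(G_{M_cU_\infty})}$, using that $\Pol B_\ell$ is closed under dualization (Lemma~\ref{lem:dualBl}). Your sketch is in the right spirit but is missing precisely these two ingredients: a monotone-by-design gadget replacing the closure step, and an argument that keeps $m$ fixed.
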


\begin{proof}
This brings together Corollaries~\ref{cor:GSM}, \ref{cor:GMcUinfty} and \ref{cor:GMcWinfty}, which will be proved in Section~\ref{sec:constructions}.
\end{proof}

% We will prove Lemma~\ref{lem:GCf} in Section~\ref{sec:constructions}.

% \begin{proof}
% This is a collection of ???????????
% \end{proof}

% \begin{lemma} \label{lemma:GCisgood}
%   Let $f$ be a Boolean function with $f \in \Pol B_n \setminus \Pol B_{n+1}$ for some $n \geq 2$,
%   and $C \in \{ S, SM, M_cU_\infty, M_cW_\infty \}$.

%   Then $G_{C}(f) \in C$ and $G_{C}(f) \in \Pol B_n \setminus \Pol B_{n+1}$.
% \end{lemma}

% \begin{proof}
%   This is the collection of the Corollaries~\ref{corollary:GSisgood}, \ref{corollary:GSMisgood},
%   \ref{corollary:GMcUinftyisgood}, and \ref{corollary:GMcWinftyisgood}.
% \end{proof}

\begin{remark}\label{rem:OldTheorem-theorem:strictCcapAkInclusions}
 Lemma \ref{lem:GCf} gives rise to a noteworthy refinement of Theorem~\ref{theorem:strictAkInclusions}. 
 Indeed, 
by Theorem~\ref{theorem:strictAkInclusions}, there is some $f \in \Pol(\mathcal{A}_k) \setminus \Pol(\mathcal{A}_{k+1})$ and, 
by Lemma~\ref{lem:GCf}, there exists a function $G_E(f) \in E \subseteq C$ satisfying 
$G_{E}(f) \in \Pol(\mathcal{A}_k) \setminus \Pol(\mathcal{A}_{k+1})$.
This implies that $G_{E}(f) \in (C \cap \Pol(\mathcal{A}_k)) \setminus (C \cap \Pol(\mathcal{A}_{k+1}))$ and thus
 $C \cap \Pol(\mathcal{A}_{k+1}) \subset C \cap \Pol(\mathcal{A}_k)$ for all $k \geq 2$.
 
 This shows that if $C$ is a clone of Boolean functions satisfying $E \subseteq C$ for some $E \in \{ SM, M_cU_\infty, M_cW_\infty \}$,
then $C \cap \Pol(\mathcal{A}_{k+1}) \subset C \cap \Pol(\mathcal{A}_k)$ for all $k \geq 2$.
\end{remark}

%\toberemoved{
% \begin{theorem} \label{theorem:strictCcapAkInclusions}
% Let $C$ be a clone of Boolean functions satisfying $E \subseteq C$ for some $E \in \{ SM, M_cU_\infty, M_cW_\infty \}$.
% Then $C \cap \Pol(\mathcal{A}_{k+1}) \subset C \cap \Pol(\mathcal{A}_k)$ for all $k \geq 2$.
% \end{theorem}
% 
% \begin{proof}
% Let $k \geq 2$.
% By Theorem~\ref{theorem:strictAkInclusions}, there is some $f \in \Pol(\mathcal{A}_k) \setminus \Pol(\mathcal{A}_{k+1})$.
% By Lemma~\ref{lem:GCf}, there exists a function $G_E(f) \in E \subseteq C$ satisfying $G_{E}(f) \in \Pol(\mathcal{A}_k) \setminus \Pol(\mathcal{A}_{k+1})$.
% This implies that $G_{E}(f) \in (C \cap \Pol(\mathcal{A}_k)) \setminus (C \cap \Pol(\mathcal{A}_{k+1}))$.
% Thus $C \cap \Pol(\mathcal{A}_{k+1}) \subset C \cap \Pol(\mathcal{A}_k)$ for all $k \geq 2$.
% \end{proof}
% }

\begin{figure}[t]
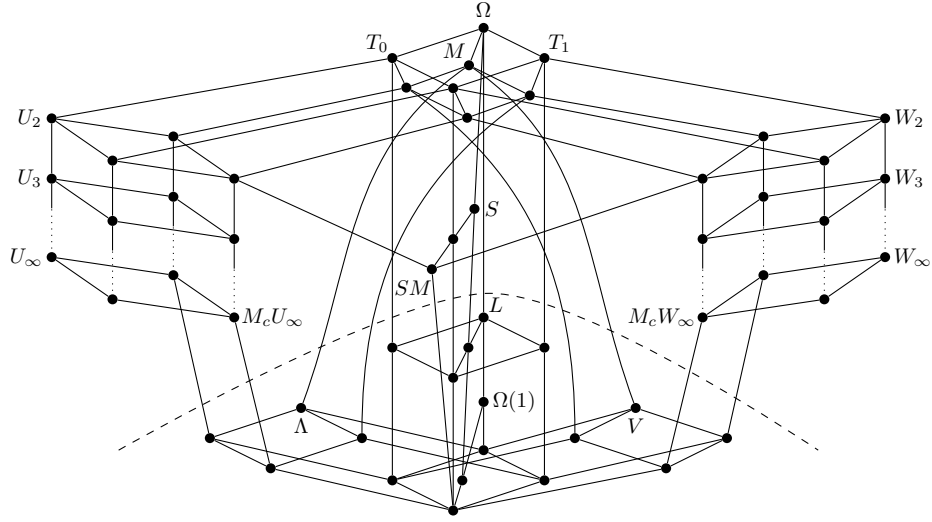

  \PostsLattice{0.8}
  \caption{Post's lattice. Illustration of Theorem~\ref{thm:classification}:
    for a clone $C$, the set $C \cap \thr$ of threshold functions in $C$ is finitely characterizable 
    if and only if $C$ is below the dashed line.}
    \label{figure:PostsLattice}
\end{figure}

\begin{theorem}
\label{thm:classification}
Let $C$ be a clone of Boolean functions.
The subclass $C \cap \thr$ of threshold functions is finitely characterizable if and only if $C$ is contained in one of the clones $L$, $V$, $\Lambda$.
\end{theorem}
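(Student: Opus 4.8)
The plan is to prove the two directions separately, using the classification of clones (Post's lattice) as the organizing principle. For the ``if'' direction, suppose $C$ is contained in one of $L$, $V$, $\Lambda$. As observed just before the statement of Theorem~\ref{thm:characterization}, we have $L \cap \thr = \Omega(1)$, $\Lambda \subseteq \thr$, and $V \subseteq \thr$; in each case $C \cap \thr$ is itself a clone (either $C$ itself, when $C \subseteq V$ or $C \subseteq \Lambda$, or a subclone of $L$, when $C \subseteq L$), hence characterized by the single relational constraint $(R,R)$ where $R$ is the invariant relation for that clone listed in Appendix~\ref{App:Post}. So $C \cap \thr$ is finitely characterizable.

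For the ``only if'' direction, I would argue the contrapositive: if $C$ is \emph{not} contained in any of $L$, $V$, $\Lambda$, then $C \cap \thr$ is not finitely characterizable. The key structural fact from Post's lattice is that a clone $C$ fails to be contained in $L \cup V \cup \Lambda$ precisely when $C$ lies above one of the three clones $SM$, $M_cU_\infty$, $M_cW_\infty$; this is exactly the configuration highlighted in Figure~\ref{figure:PostsLattice} (these are the minimal clones above the dashed line), and I would verify it by inspecting the lattice. So fix such a $C$ and let $E \in \{SM, M_cU_\infty, M_cW_\infty\}$ with $E \subseteq C$.

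Now I would invoke Lemma~\ref{lemma:finiteDescendingChainIffFinitelyCharacterizable} applied to the chain $C_k := C \cap \Pol(\mathcal{A}_k)$ for $k \geq 2$. Each $C_k$ is minor-closed (an intersection of two minor-closed classes — $C$ is a clone, hence minor-closed, and $\Pol(\mathcal{A}_k)$ is characterized by constraints), we have $C_{k+1} \subseteq C_k$, and by Theorem~\ref{thm:characterization} together with the Corollary preceding Theorem~\ref{theorem:strictAkInclusions},
\[
\bigcap_{k \geq 2} C_k = C \cap \bigcap_{k \geq 2}\Pol(\mathcal{A}_k) = C \cap \thr.
\]
By Remark~\ref{rem:OldTheorem-theorem:strictCcapAkInclusions}, the inclusions $C_{k+1} \subset C_k$ are \emph{strict} for every $k \geq 2$: using Theorem~\ref{theorem:strictAkInclusions} pick $f \in \Pol(\mathcal{A}_k)\setminus\Pol(\mathcal{A}_{k+1})$, and apply Lemma~\ref{lem:GCf} with this $E$ to get $G_E(f) \in E \subseteq C$ which preserves the same $B_n$'s as $f$, hence witnesses $C_k \neq C_{k+1}$. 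Since the chain $(C_k)$ is strictly decreasing and does not stabilize, Lemma~\ref{lemma:finiteDescendingChainIffFinitelyCharacterizable} (contrapositive) gives that $C \cap \thr = \bigcap_k C_k$ is not finitely characterizable, completing the proof.

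The main obstacle is the lattice-combinatorial claim that the clones not contained in $L \cup V \cup \Lambda$ are exactly those above $SM$, $M_cU_\infty$, or $M_cW_\infty$; this needs a careful reading of Post's lattice (Appendix~\ref{App:Post} and Figure~\ref{figure:PostsLattice}) rather than a slick argument — one checks that every clone strictly above the three named ones fails each of the three containments, and conversely that a clone avoiding all three containments must sit above one of $SM$, $M_cU_\infty$, $M_cW_\infty$. Everything else is bookkeeping with results already established: Lemma~\ref{lem:GCf} does the real work of producing the strictness witnesses inside $C$, and Lemma~\ref{lemma:finiteDescendingChainIffFinitelyCharacterizable} converts non-stabilization into non-finite-characterizability.
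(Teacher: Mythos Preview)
Your proposal is correct and matches exactly the alternative proof the paper gives in the Remark immediately following Theorem~\ref{thm:classification}: the descending-chain argument via Lemma~\ref{lemma:finiteDescendingChainIffFinitelyCharacterizable} and Remark~\ref{rem:OldTheorem-theorem:strictCcapAkInclusions}. The paper's \emph{primary} proof takes a slightly different route for the ``only if'' direction---it uses Theorem~\ref{thm:finitelycharacterizablefunctions} to pass to a finite antichain $A$ of forbidden minors, then a pigeonhole argument on the $f_k$'s to locate a single $g \in A$ below infinitely many of them and derive a contradiction from closure of $\Pol B_p$ under minors---but both arguments rest on the same ingredients (the Post-lattice reduction to $E \in \{SM, M_cU_\infty, M_cW_\infty\}$ and Lemma~\ref{lem:GCf}) and are of comparable difficulty.
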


This theorem is illustrated by Figure~\ref{figure:PostsLattice}.
%\toberemoved{We provide two different proofs.}

\begin{proof}%[Second proof of Theorem~\ref{thm:classification}]
We have already observed that $C \cap \thr$ is finitely characterizable for every subclone $C$ of $L$, $V$ or $\Lambda$.

Now we consider all the other clones. Let $C$ be a clone such that $C \not\subseteq D$ for all $D \in \{L, V, \Lambda\}$.
We can read off of Post's lattice (see Figure~\ref{figure:PostsLattice}) that there is some $E \in \{SM, M_cU_\infty, M_cW_\infty\}$ such that $E \subseteq C$.
It follows from Theorem~\ref{theorem:strictAkInclusions} and Lemma~\ref{lem:GCf} that for every $k \geq 2$, there exists a function $f_k \in E$ such that $f_k \in \Pol B_\ell$ whenever $2 \leq \ell \leq k$ and $f_k \notin \Pol B_{k+1}$.
Note that $f_k \notin C \cap \thr$.
 
Suppose, on the contrary that $C \cap \thr$ is finitely characterizable.
By Theorem~\ref{thm:finitelycharacterizablefunctions}, $C \cap \thr$ is of the form $\forbid(A)$ for some finite antichain $A$ of minimal forbidden minors.
Each one of the functions $f_k$ has a minor in $A$.
Since $A$ is finite, there is an element $g \in A$ and an infinite set $S \subseteq \IN$ such that $g \leq f_k$ for all $k \in S$.
The function $g$ is not threshold, so there exists $p \in \IN$ such that $p \geq 2$ and $g \notin \Pol B_p$.
Being infinite, the set $S$ contains an element $q$ with $p \leq q$.
Then we have $g \leq f_q$ and $f_q \in \Pol B_p$.
We also have $g \in \Pol B_p$, because $\Pol B_p$ is closed under taking minors.
This yields the desired contradiction.
\end{proof}

\begin{remark}
 Alternatively, Theorem~\ref{thm:classification} can be proved using Lemma~\ref{lemma:finiteDescendingChainIffFinitelyCharacterizable}
 and Remark \ref{rem:OldTheorem-theorem:strictCcapAkInclusions}.

  As before if $C$ is a subclone of $L$, $V$ or $\Lambda$, then $C \cap \thr$ is finitely characterizable.
  As for any other clone $C$, we know (once again reading off of Post's lattice) that there is some 
  $E \in \{SM, M_cU_\infty, M_cW_\infty\}$ such that $E \subseteq C$.
By Remark \ref{rem:OldTheorem-theorem:strictCcapAkInclusions}, we have 
 $C \cap \Pol(\mathcal{A}_{k+1}) \subset C \cap \Pol(\mathcal{A}_k)$ for all $k \geq 2$.
 Furthermore,
 \[
 C \cap \thr
 = C \cap \bigcap_{n \geq 2} \Pol(\mathcal{A}_k)
 = \bigcap_{n \geq 2} (C \cap \Pol(\mathcal{A}_k)),
 \]
 i.e., we have an infinite descending chain the intersection of which equals $C \cap \thr$.
 By Lemma~\ref{lemma:finiteDescendingChainIffFinitelyCharacterizable} we thus conclude that $C \cap \thr$
 is not finitely characterizable.
 \end{remark}

%%%%%%%%%%%%%%%%%%%%%%%%%%%%%%%%%%%%%%%%%%%%%%%%%%%%%%%%%%%%

\section{Constructions}
\label{sec:constructions}

In order to prove Lemma~\ref{lem:GCf}, we will construct from a given Boolean function $f$, for each $C \in \{S, M_c, SM, U_\infty, M_cU_\infty, M_cW_\infty \}$, a Boolean function $G_C(f)$ that satisfies the following conditions:
\begin{enumerate}[label=\rm (\roman*)]
\item $G_C(f) \in C$,
\item for all $\ell \geq 2$, $f \in \Pol B_\ell$ if and only if $G_C(f) \in \Pol B_\ell$.
\end{enumerate}
We do this step by step. We first construct functions $G_S(f)$ and $G_{M_c}(f)$ with the desired properties.
Using these two constructions as building blocks, we can construct $G_{SM}$ as $G_{M_c}(G_S(f))$.
Then we construct $G_{U_\infty}(f)$, and, building upon this, we finally get $G_{M_cU_\infty}(f) := G_{U_\infty}(G_{M_c}(f))$ and $G_{M_cW_\infty}(f) := \dual{(G_{M_cU_\infty}(f))}$.

% \begin{equation} \label{equation:fBliffGCfBl}
%   f \in \Pol B_\ell \iff G_C(f) \in \Pol B_\ell 
% \end{equation}
% for all $l \geq 2$.

% Then in Subsection~\ref{subsec:SMT} we construct for every function $f' \in S$ a function $g_{SM}(f') \in SM$
% with $f' \in \Pol B_\ell$ if and only if $g_{SM}(f') \in \Pol B_\ell$ for all $l \geq 2$. With this function 
% we define
% $G_{SM}(f) := g_{SM}(G_S(f))$ satisfying \eqref{equation:fBliffGCfBl}.

% In Subsection~\ref{subsec:MCT} we do it in a similar way. For every function $f' \in SM$ we construct 
% a function $g_{M_cU_\infty}(f') \in M_cU_\infty$
% with $f' \in \Pol B_\ell$ if and only if $g_{M_cU_\infty}(f') \in \Pol B_\ell$ for all $l \geq 2$. 
% With this function we define
% $G_{M_cU_\infty}(f) := g_{M_cU_\infty}(G_{SM}(f))$ satisfying \eqref{equation:fBliffGCfBl}.
% The function $G_{M_cW_\infty}(f)$ will be the dual function of $G_{M_cU_\infty}(f)$, also satisfying
% \eqref{equation:fBliffGCfBl} since $B_\ell$ is dual to itself for all $l \geq 2$.

\subsection{Construction of $G_{S}(f)$}
\label{subsec:ST}

% In this subsection we will show that for each $l \geq 2$ and 
% each function $f \in \Pol B_\ell \setminus \Pol B_{\ell+1}$ there
% is some selfdual function $G_{S}(f) \in \Pol B_\ell \setminus \Pol B_{l+1}$.

Let $f \colon \IB^n \to \IB$. Then we define $G_{S}(f) \colon \IB^{n+1} \to \IB$ by
\[
G_{S}(f)(x_1, \dots, x_{n+1}) = (x_{n+1} \wedge f(x_1, \dots, x_n)) \vee (\comp{x}_{n+1} \wedge \dual{f}(x_1, \dots, x_n)).
\]

\begin{lemma} \label{lemma:anytoS}
For any $f \colon \IB^n \to \IB$, the function $G_{S}(f)$ is self-dual.
\end{lemma}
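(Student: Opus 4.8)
The plan is to verify directly that $G_S(f) = \dual{(G_S(f))}$ by computing $\dual{(G_S(f))}(x_1,\dots,x_{n+1})$ from the definition of the dual and simplifying. Recall that $\dual{g}(\xx) = \comp{g(\comp{x}_1,\dots,\comp{x}_{n+1})}$, and that for Boolean functions $h$ we have the De Morgan–type identities $\comp{a \vee b} = \comp{a} \wedge \comp{b}$, $\comp{a \wedge b} = \comp{a} \vee \comp{b}$, and $\comp{\comp{a}} = a$. The only structural fact I need about $f$ is that $\dual{(\dual{f})} = f$, which is immediate from the definition.

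First I would write $G_S(f)(x_1,\dots,x_{n+1}) = (x_{n+1} \wedge f(x_1,\dots,x_n)) \vee (\comp{x}_{n+1} \wedge \dual{f}(x_1,\dots,x_n))$ and substitute complemented variables everywhere: this gives
\[
G_S(f)(\comp{x}_1,\dots,\comp{x}_{n+1}) = (\comp{x}_{n+1} \wedge f(\comp{x}_1,\dots,\comp{x}_n)) \vee (x_{n+1} \wedge \dual{f}(\comp{x}_1,\dots,\comp{x}_n)).
\]
Then I would take the complement of the whole expression, pushing the negation inward with De Morgan's laws. The outer $\vee$ becomes $\wedge$; inside each conjunct, $\comp{\comp{x}_{n+1} \wedge f(\comp{x}_1,\dots,\comp{x}_n)} = x_{n+1} \vee \comp{f(\comp{x}_1,\dots,\comp{x}_n)}$ and $\comp{x_{n+1} \wedge \dual{f}(\comp{x}_1,\dots,\comp{x}_n)} = \comp{x}_{n+1} \vee \comp{\dual{f}(\comp{x}_1,\dots,\comp{x}_n)}$. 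Now I recognize $\comp{f(\comp{x}_1,\dots,\comp{x}_n)} = \dual{f}(x_1,\dots,x_n)$ and $\comp{\dual{f}(\comp{x}_1,\dots,\comp{x}_n)} = \dual{(\dual{f})}(x_1,\dots,x_n) = f(x_1,\dots,x_n)$, so that
\[
\dual{(G_S(f))}(x_1,\dots,x_{n+1}) = \bigl(x_{n+1} \vee \dual{f}(\xx')\bigr) \wedge \bigl(\comp{x}_{n+1} \vee f(\xx')\bigr),
\]
writing $\xx' = (x_1,\dots,x_n)$ for brevity.

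It then remains to check that this last expression equals $G_S(f)(x_1,\dots,x_{n+1}) = (x_{n+1} \wedge f(\xx')) \vee (\comp{x}_{n+1} \wedge \dual{f}(\xx'))$. I would do a case split on $x_{n+1}$: if $x_{n+1} = 1$, both expressions reduce to $f(\xx')$; if $x_{n+1} = 0$, both reduce to $\dual{f}(\xx')$. Alternatively, one can expand the conjunction $(x_{n+1} \vee \dual{f}(\xx')) \wedge (\comp{x}_{n+1} \vee f(\xx'))$ distributively, using $x_{n+1} \wedge \comp{x}_{n+1} = 0$ and $x_{n+1} \wedge f(\xx') \vee \dual{f}(\xx') \wedge f(\xx') = x_{n+1} \wedge f(\xx')$ after absorbing the middle term, to land on $G_S(f)$. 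Either way this is a routine Boolean identity. The argument has no real obstacle; the only thing to be careful about is bookkeeping — keeping track of which copies of the variables are complemented and correctly invoking $\dual{(\dual{f})} = f$ at the right moment — so I would structure it as the three displayed steps above to make the cancellations transparent.
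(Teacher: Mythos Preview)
Your proposal is correct and follows essentially the same route as the paper: compute the dual of $G_S(f)$, apply De Morgan and the identity $\dual{(\dual{f})}=f$ to reach $(x_{n+1}\vee \dual{f}(\xx'))\wedge(\comp{x}_{n+1}\vee f(\xx'))$, and then verify this equals $G_S(f)$. The paper does the last step by distributive expansion and absorption of the cross term $\dual{f}(\xx')\wedge f(\xx')$, whereas your case split on $x_{n+1}$ is a slightly cleaner alternative; otherwise the arguments coincide.
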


\begin{proof}
  Let $g := G_{S}(f)$.
  Then
  \begin{align*}
  \dual{g}(\mathbf{x}, x_{n+1}) &=
  \comp{(\comp{x}_{n+1} \wedge f(\comp{\mathbf{x}})) \vee (\comp{\comp{x}}_{n+1} \wedge \dual{f}(\comp{\mathbf{x}}))} \\
  &= (x_{n+1} \vee \dual{f}(\mathbf{x})) \wedge (\comp{x}_{n+1} \vee f(\mathbf{x})) \\
  &= (x_{n+1} \wedge \comp{x}_{n+1}) \vee (x_{n+1} \wedge f(\mathbf{x})) \vee (\dual{f}(\mathbf{x}) \wedge \comp{x}_{n+1}) \vee (\dual{f}(\mathbf{x}) \wedge f(\mathbf{x})) \\
  &= (x_{n+1} \wedge f(\mathbf{x})) \vee (\dual{f}(\mathbf{x}) \wedge \comp{x}_{n+1}) \\
  &= g(\mathbf{x}, x_{n+1}),
  \end{align*}
  where the second last equality holds since
\[
\dual{f}(\mathbf{x}) \wedge f(\mathbf{x}) \leq (x_{n+1} \wedge f(\mathbf{x})) \vee (\dual{f}(\mathbf{x}) \wedge \comp{x}_{n+1})
\]
for every $x_{n+1}$.
\end{proof}

\begin{lemma} \label{lemma:ST:fNotinBlimpliesgNotinBl}
Let $f \colon \IB^n \to \IB$.
If $f \notin \Pol B_\ell$ for some $\ell \geq 2$, then $G_{S}(f) \notin \Pol B_\ell$.
\end{lemma}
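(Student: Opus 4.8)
The plan is to start from a witness to the failure of $f \triangleright B_\ell$ and lift it to a witness that $G_S(f) \triangleright B_\ell$ fails. By Lemma~\ref{lem:PolBl}, $f \notin \Pol B_\ell$ means there are $\aa_1, \dots, \aa_\ell \in f^{-1}(0)$ and $\bb_1, \dots, \bb_\ell \in f^{-1}(1)$ with $\aa_1 + \dots + \aa_\ell = \bb_1 + \dots + \bb_\ell$ (sum in $\IR^n$). I want to produce $\ell$ false points and $\ell$ true points of $g := G_S(f) \colon \IB^{n+1} \to \IB$ whose respective sums agree in $\IR^{n+1}$; then Lemma~\ref{lem:PolBl} again gives $g \notin \Pol B_\ell$.

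The key observation is how $g$ behaves on the last coordinate. From the definition, $g(\xx, 1) = f(\xx)$ and $g(\xx, 0) = \dual{f}(\xx) = \comp{f(\comp{\xx})}$. So appending a $1$ to a point of $f^{-1}(i)$ gives a point of $g^{-1}(i)$, and appending a $0$ to a point $\xx$ with $\comp{f(\comp{\xx})} = i$, i.e. to $\comp{\yy}$ where $\yy \in f^{-1}(\comp{i})$, gives a point of $g^{-1}(i)$. The plan is to balance the extra coordinate by using half of the witness tuples with a trailing $1$ and the complements of the other half with a trailing $0$. Concretely, I would set, for the false points of $g$:
\[
\aa_i' := (\aa_i, 1) \quad (1 \le i \le \ell), \qquad \bb_j' := (\comp{\bb_j}, 0) \quad (1 \le j \le \ell),
\]
wait — more carefully: to keep the count at $\ell$ on each side I would split each block in half. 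For $1 \le i \le \ell$ put $\uu_i := (\aa_i, 1)$, which lies in $g^{-1}(0)$; and for $1 \le j \le \ell$ put $\vv_j := (\bb_j, 1)$, which lies in $g^{-1}(1)$. Then $\sum_i \uu_i = (\sum_i \aa_i, \ell)$ and $\sum_j \vv_j = (\sum_j \bb_j, \ell)$, and these are equal by the original relation. This already shows $g \notin \Pol B_\ell$ directly, using only the identity $g(\xx,1) = f(\xx)$, with no need to involve $\dual{f}$ or complements at all.

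So the actual proof is short: verify $g(\xx,1) = f(\xx)$ from the definition of $G_S(f)$ (since $x_{n+1} = 1$ kills the $\comp{x}_{n+1}$-disjunct and makes the first disjunct equal $f(\xx)$), conclude $(\aa_i,1) \in g^{-1}(0)$ and $(\bb_j,1) \in g^{-1}(1)$, observe that the two sums of these augmented tuples coincide in $\IR^{n+1}$ because the original sums coincide in $\IR^n$ and each augmented tuple contributes the same value $1$ in the last coordinate, and invoke Lemma~\ref{lem:PolBl}. There is no real obstacle here; the only thing to be careful about is correctly reading off the restriction $g(\cdot, 1) = f$ from the Boolean formula defining $G_S(f)$, and noting that the equality of real-vector sums is preserved under appending a constant coordinate to every tuple on both sides.
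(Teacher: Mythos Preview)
Your proposal is correct and is essentially the same argument as the paper's: both use the identity $g(\xx,1)=f(\xx)$ and append a constant $1$ in the new coordinate to lift a witness of non-preservation from $f$ to $G_S(f)$. The only cosmetic difference is that the paper works directly with the relational-constraint formulation (columns $\yy_1,\dots,\yy_n\in R(B_\ell)$ together with the extra column $\11\in R(B_\ell)$), whereas you route through Lemma~\ref{lem:PolBl} and phrase the same thing in terms of rows (false/true points with equal sums); these are just the column and row views of the same matrix. Your initial detour via $\dual{f}$ and complements is unnecessary, as you yourself note.
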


\begin{proof}
Assume that $f \notin \Pol B_\ell$, and let $g := G_{S}(f)$.
Then there are $\yy_1, \dots, \yy_n \in R(B_\ell)$ with $f(\yy_1, \dots, \yy_n) \notin S(B_\ell)$.
Since $g(x_1, \dots, x_n, 1) = f(x_1, \dots, x_n)$, we have
\[
g(\yy_1,\dots,\yy_n,\11) = f(\yy_1,\dots,\yy_n) \notin S(B_\ell).
\]
Since also $\11 \in R(B_\ell)$, we conclude that $g \notin \Pol B_\ell$.
\end{proof}

\begin{lemma} \label{lemma:ST:fInBlImpliesgInBl}
Let $f \colon \IB^n \to \IB$.
If $f \in \Pol B_\ell$ for some $\ell \geq 2$, then $G_{S}(f) \in \Pol B_\ell$.
\end{lemma}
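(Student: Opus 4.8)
The goal is to show that if $f \colon \IB^n \to \IB$ preserves $B_\ell$, then $g := G_S(f) \colon \IB^{n+1} \to \IB$ also preserves $B_\ell$. Recall
\[
g(x_1, \dots, x_{n+1}) = (x_{n+1} \wedge f(x_1, \dots, x_n)) \vee (\comp{x}_{n+1} \wedge \dual{f}(x_1, \dots, x_n)),
\]
so that $g(\xx, 1) = f(\xx)$ and $g(\xx, 0) = \dual{f}(\xx)$. Suppose for contradiction that $g \notin \Pol B_\ell$. Then there are columns $\zz_1, \dots, \zz_{n+1} \in R(B_\ell)$ — each a tuple in $\IB^{2\ell}$ whose first-half Hamming weight equals its second-half Hamming weight — such that $g(\zz_1, \dots, \zz_{n+1}) \notin S(B_\ell)$. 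By the swap symmetry of $B_\ell$ (exchanging the first $\ell$ rows with the last $\ell$), we may assume the resulting $2\ell$-tuple is $(\underbrace{0,\dots,0}_\ell, \underbrace{1,\dots,1}_\ell)$. Writing the rows of the matrix $(\zz_1, \dots, \zz_{n+1})$ as $\ww^1, \dots, \ww^{2\ell} \in \IB^{n+1}$, and splitting each $\ww^j = (\uu^j, c^j)$ with $\uu^j \in \IB^n$ and $c^j \in \IB$ the last coordinate, we then have: the $\uu^j$ satisfy the $B_\ell$ relation columnwise (the first $n$ columns $\zz_1,\dots,\zz_n$ lie in $R(B_\ell)$), the last column $\zz_{n+1} = (c^1, \dots, c^{2\ell})$ also lies in $R(B_\ell)$, and $g(\ww^j) = 0$ for $j \leq \ell$, $g(\ww^j) = 1$ for $j > \ell$.

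The key step is to convert this into a $B_\ell$-violation for $f$ itself, using the case split on the value of $c^j$. The point is that $g(\uu^j, 1) = f(\uu^j)$ and $g(\uu^j, 0) = \dual{f}(\uu^j) = \comp{f(\comp{\uu^j})}$. So in each row I negate $\uu^j$ whenever $c^j = 0$, producing a tuple $\vv^j := \uu^j$ if $c^j = 1$ and $\vv^j := \comp{\uu^j}$ if $c^j = 0$; in all cases $f(\vv^j)$ equals $g(\ww^j)$, hence $f(\vv^j) = 0$ for $j \leq \ell$ and $f(\vv^j) = 1$ for $j > \ell$. It remains to check that the columns of the matrix with rows $\vv^1, \dots, \vv^{2\ell}$ still lie in $R(B_\ell)$; if so, then these $\ell$ false points and $\ell$ true points of $f$ form the rows of a matrix witnessing $f \notin \Pol B_\ell$ by Lemma~\ref{lem:PolBl}, contradicting the hypothesis.

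The main obstacle — and the only genuinely delicate point — is verifying that the row-wise complementations preserve membership in $R(B_\ell)$ columnwise. Fix a coordinate $i \in \nset{n}$; the $i$-th column of the new matrix is obtained from $\zz_i = (z^1, \dots, z^{2\ell})$ by replacing $z^j$ with $\comp{z^j}$ exactly for those $j$ with $c^j = 0$. I need the first-half and second-half Hamming weights of this modified column to remain equal. Here is where the condition $\zz_{n+1} \in R(B_\ell)$ is used: among $j \in \{1,\dots,\ell\}$, the number with $c^j = 0$ equals, say, $\ell - p$ where $p = w(c^1,\dots,c^\ell)$, and among $j \in \{\ell+1,\dots,2\ell\}$ the number with $c^j = 0$ is $\ell - p$ as well, since $w(c^{\ell+1},\dots,c^{2\ell}) = p$ by $\zz_{n+1} \in R(B_\ell)$. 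Thus the same number of entries get flipped in each half. Flipping a $0$ raises the half-weight by $1$ and flipping a $1$ lowers it by $1$; to finish one argues that the net change is the same in both halves. This requires comparing, within the set of flipped positions, how many carried a $0$ versus a $1$ in column $i$ — equivalently, relating $w(z^j : j \le \ell, c^j = 0)$ to $w(z^j : j > \ell, c^j = 0)$. This is not automatic from $\zz_i \in R(B_\ell)$ and $\zz_{n+1} \in R(B_\ell)$ alone, so the correct approach is instead to compute $f(\vv^j)$ directly from the explicit formula for $g$ and handle the bookkeeping by treating the four combinations of $(c^j, z^j_i)$ uniformly: one shows that for each column the contribution to the first-half weight of the $\vv$-matrix minus that to the second-half weight equals the corresponding difference for the $\zz$-matrix (which is $0$), by pairing up flipped positions in the two halves via the equality of $0$-counts established above and observing the symmetry is forced once we also track, per half, the joint distribution — which is exactly what being columns of a single $B_\ell$-admissible matrix in the first $n$ coordinates gives us when combined with the last column. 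Making this pairing precise, and checking the one genuinely non-obvious counting identity, is the heart of the proof; everything else is formula manipulation with the definition of $g$ and $\dual{f}$.
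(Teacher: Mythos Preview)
Your proposal contains a genuine error that propagates through the rest of the argument. You claim that for $\vv^j := \uu^j$ when $c^j=1$ and $\vv^j := \comp{\uu^j}$ when $c^j=0$, one has $f(\vv^j)=g(\ww^j)$ in all cases. This is false when $c^j=0$: then
\[
g(\ww^j)=g(\uu^j,0)=\dual{f}(\uu^j)=\comp{f(\comp{\uu^j})}=\comp{f(\vv^j)},
\]
so $f(\vv^j)=\comp{g(\ww^j)}$, not $g(\ww^j)$. Consequently the $\vv^j$ with $j\le\ell$ are \emph{not} all false points of $f$, and the $\vv^j$ with $j>\ell$ are not all true points.

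Because of this, the column check you attempt (that the first-half and second-half Hamming weights of each $\vv$-column agree) is the wrong thing to verify, and indeed it fails in general. For a concrete counterexample take $\ell=2$, some column $\zz_i=(1,0,1,0)\in R(B_2)$, and last column $\zz_{n+1}=(c^1,\dots,c^4)=(1,0,0,1)\in R(B_2)$; your $\vv$-column is $(1,1,0,0)\notin R(B_2)$. The vague paragraph at the end (``making this pairing precise \ldots'') does not supply the missing ingredient.

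The fix, which is exactly what the paper does, is to also \emph{swap} each first-half row having $c^j=0$ with a second-half row having $c^{j'}=0$ (these exist in equal number since $\zz_{n+1}\in R(B_\ell)$). The swap repairs both problems at once: (i) a negated row moves to the opposite half, so the complemented value $\comp{g(\ww^j)}$ lands on the correct side of the $0/1$ split; and (ii) the column-weight computation now reduces cleanly to $A_1+A_0=B_1+B_0$, which is precisely the hypothesis $\zz_i\in R(B_\ell)$. Equivalently, if you keep your $\vv^j$ but correctly identify the false points of $f$ as those $\vv^j$ with ($j\le\ell$, $c^j=1$) or ($j>\ell$, $c^j=0$), then the equality of the two $\ell$-term sums over false and true points follows in one line from $\zz_i\in R(B_\ell)$.
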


\begin{proof}
Let $g := G_{S}(f)$.
Suppose, on the contrary, that $g \notin \Pol B_\ell$. Then there is some matrix $M$ given by
  \[
    M = \begin{pmatrix}
      m_1^1 & m_2^1 & \dots & m_{n+1}^1 \\
      m_1^2 & m_2^2 & \dots & m_{n+1}^2 \\
      \vdots & \vdots &  & \vdots \\
      m_1^{2\ell} & m_2^{2\ell} & \dots & m_{n+1}^{2\ell}
    \end{pmatrix} = \begin{pmatrix} 
      M^1 \\
      M^2 \\
      \vdots \\
      M^{2\ell}
    \end{pmatrix}
    = (M_1,M_2,\dots,M_{n+1}),
  \]
  i.e., $M^1,\dots,M^{2\ell} \in \IB^{n+1}$ are the rows of $M$, 
  and $M_1,\dots,M_{n+1} \in \IB^{2\ell}$ are the columns of $M$, such that
  \begin{itemize}
    \item $M_1,\dots,M_{n+1} \in R(B_\ell)$, and
    \item $\zz := g(M_1,\dots,M_{n+1}) := \begin{pmatrix} g(M^1) \\ \vdots \\ g(M^{2\ell}) \end{pmatrix} 
        \notin S(B_\ell)$.
  \end{itemize}
  Thus $\zz \in \{ (\underbrace{0,\dots,0}_\ell,\underbrace{1,\dots,1}_\ell), 
  (\underbrace{1,\dots,1}_\ell, \underbrace{0,\dots,0}_\ell)\}$.
  As $B_\ell$ is invariant under swapping the first $\ell$ coordinates with the last $\ell$ coordinates, we can
  assume that $\zz = (\underbrace{0,\dots,0}_\ell, \underbrace{1,\dots,1}_\ell)$.

  We now look at the last column $M_{n+1}$ of $M$. Since $\sum_{i=1}^\ell m_{n+1}^i = \sum_{i=\ell+1}^{2\ell} m_{n+1}^i$,
  and since $B_\ell$ is totally symmetric on the first $\ell$ rows and on the last $\ell$ rows, we can assume
  that \[
    M_{n+1} = (\underbrace{0,\dots,0}_\alpha, \underbrace{1,\dots,1}_\beta,
               \underbrace{0,\dots,0}_\alpha, \underbrace{1,\dots,1}_\beta)
  \]
  holds for some $\alpha,\beta \geq 0$ with $\alpha+\beta = \ell$.

  We will now construct a matrix $K$ with
  \[
    K = \begin{pmatrix}
      k_1^1 & k_2^1 & \dots & k_n^1 \\
      k_1^2 & k_2^2 & \dots & k_n^2 \\
      \vdots & \vdots &  & \vdots \\
      k_1^{2\ell} & k_2^{2\ell} & \dots & k_n^{2\ell}
    \end{pmatrix} = \begin{pmatrix} 
      K^1 \\
      K^2 \\
      \vdots \\
      K^{2\ell}
    \end{pmatrix}
    = (K_1,K_2,\dots,K_n),
  \]
that satisfies $K_1, \dots, K_n \in R(B_\ell)$ and $f(K_1, \dots, K_n) \notin S(B_\ell)$. 
This will yield the desired contradiction since we started with the assumption that $f \in \Pol B_\ell$.

  We define $k_j^i$ for $1 \leq i \leq 2\ell$ and $1 \leq j \leq n$ by
  \[ \def\arraystretch{1.2}
    k_j^i = \left\{\begin{array}{llr@{{} \leq i \leq {}}l}
      \overline{m}_j^{i+\ell} & \text{if } & 1          & \alpha \\
      m_j^i                & \text{if } & \alpha+1   & \ell \\
      \overline{m}_j^{i-\ell} & \text{if } & \ell+1        & \ell+\alpha \\
      m_j^i                & \text{if } & \ell+\alpha+1 & 2\ell
    \end{array}\right.
  \]
In other words, matrix $K$ is obtained from $M$ by omitting the last column, negating rows $1, \dots, \alpha$ and $\ell + 1, \dots, \ell + \alpha$, and then swapping rows $1, \dots, \alpha$ with rows $\ell + 1, \dots, \ell + \alpha$.
  
  We need to show that $K_j \in R(B_\ell)$ for all $j \in [n]$.
  Let $j \in [n]$ be arbitrary, and let 
  \[ 
    a := \sum_{i=1}^\alpha m_j^i, \qquad
    b := \sum_{i=\alpha+1}^\ell m_j^i, \qquad
    c := \sum_{i=\ell+1}^{\ell+\alpha} m_j^i, \qquad
    d := \sum_{i=\ell+\alpha+1}^{2\ell} m_j^i. 
  \]
  Since $M_j \in R(B_\ell)$ we have 
  \begin{equation} \label{equation:ST:abEqualscd}
    a+b = \sum_{i=1}^\ell m_j^i = \sum_{i=\ell+1}^{2\ell} m_j^i = c + d.
  \end{equation}
  For $K_j$ we find the following:
  \begin{align*}
    \sum_{i=1}^\alpha k_j^i & 
     = \sum_{i=1}^\alpha \overline{m}_j^{i+\ell} 
     = \sum_{i=1}^\alpha (1-m_j^{i+\ell}) 
     = \alpha - \sum_{i=\ell+1}^{\ell+\alpha} m_j^i 
     = \alpha - c, \displaybreak[0]\\
    \sum_{i=\alpha+1}^\ell k_j^i & = \sum_{i=\alpha+1}^\ell m_j^i = b, \displaybreak[0]\\
    \sum_{i=\ell+1}^{\ell+\alpha} k_j^i & 
     = \sum_{i=\ell+1}^{\ell+\alpha} \overline{m}_j^{i-\ell}
     = \sum_{i=\ell+1}^{\ell+\alpha} (1-m_j^{i-\ell})
     = \alpha - \sum_{i=1}^{\alpha} m_j^i 
     = \alpha - a, \displaybreak[0]\\
    \sum_{i=\ell+\alpha+1}^{2\ell} k_j^i & = \sum_{i=\ell+\alpha+1}^{2\ell} m_j^i = d.
  \end{align*}
  From this it follows that
  \[ \sum_{i=1}^\ell k_j^i  = \alpha - c + b \overset{\eqref{equation:ST:abEqualscd}}{=} 
    \alpha - a + d = \sum_{i=\ell+1}^{2\ell} k_j^i, \]
  and thus $K_j \in R(B_\ell)$ for all $j \in [n]$.

  We now show that $f(K^i) = 0$ if $1 \leq i \leq \ell$, and $f(K^i) = 1$ if $\ell+1 \leq i \leq 2\ell$.
We need to consider four different cases for $i$:
  \begin{itemize}
    \item $1 \leq i \leq \alpha$. Then $(\overline{K}^i,0) = M^{i+\ell}$, and 
      \[ \overline{f(K^i)} = \dual{f}(\overline{K}^i) = g(\overline{K}^i, 0) = g(M^{i+\ell}) = 1. \]
      Hence $f(K^i) = 0$.
    \item $\alpha+1 \leq i \leq \ell$. Then $(K^i,1) = M^i$, and 
      \[ f(K^i) = g(K^i,1) = g(M^i) = 0. \]
    \item $\ell+1 \leq i \leq \ell+\alpha$. Then $(\overline{K}^i,0) = M^{i-\ell}$, and 
      \[ \overline{f(K^i)} = \dual{f}(\overline{K}^i) = g(\overline{K}^i, 0) = g(M^{i-\ell}) = 0. \]
      Hence $f(K^i) = 1$.
    \item $\ell+\alpha+1 \leq i \leq 2\ell$. Then $(K^i,1) = M^i$, and 
      \[ f(K^i) = g(K^i,1) = g(M^i) = 1. \]
  \end{itemize}
  Thus we have
  \[
    f(K_1,\dots,K_n) = \begin{pmatrix} f(K^1) \\ \vdots \\ f(K^\ell) \\ f(K^{\ell+1}) \\ \vdots \\ f(K^{2\ell}) \end{pmatrix}
      = \begin{pmatrix} 0 \\ \vdots \\ 0 \\ 1 \\ \vdots \\1 \end{pmatrix} \notin S(B_\ell),
  \]
  in contradiction to $f \in \Pol B_\ell$. We conclude that $g \in \Pol B_\ell$.
\end{proof}

\begin{corollary}
\label{cor:GS}
For any Boolean function $f \colon \IB^n \to \IB$, $G_S(f) \in S$ and for all $\ell \geq 2$, $f \in \Pol B_\ell$ if and only if $G_S(f) \in \Pol B_\ell$.
\end{corollary}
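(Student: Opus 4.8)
The plan is to obtain the statement immediately by assembling the three lemmas just established, since each of the two assertions of the corollary is an instance (or an elementary reformulation) of one of them. Recall that $S$ is the clone of all self-dual Boolean functions. Lemma~\ref{lemma:anytoS} asserts exactly that $G_S(f)$ is self-dual for every $f \colon \IB^n \to \IB$, and hence $G_S(f) \in S$; this settles the membership claim.

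For the equivalence I would treat the two directions separately. The implication ``$f \in \Pol B_\ell \Rightarrow G_S(f) \in \Pol B_\ell$'' is precisely the content of Lemma~\ref{lemma:ST:fInBlImpliesgInBl}. For the reverse implication I would argue by contraposition: Lemma~\ref{lemma:ST:fNotinBlimpliesgNotinBl} gives ``$f \notin \Pol B_\ell \Rightarrow G_S(f) \notin \Pol B_\ell$'', which is logically the same as ``$G_S(f) \in \Pol B_\ell \Rightarrow f \in \Pol B_\ell$''. Combining the two yields $f \in \Pol B_\ell \iff G_S(f) \in \Pol B_\ell$ for every $\ell \geq 2$, as required.

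I do not expect any genuine obstacle at this stage: all the substantive work has already been done in the preceding lemmas, in particular the matrix manipulation in the proof of Lemma~\ref{lemma:ST:fInBlImpliesgInBl}, where from a hypothetical violation of $B_\ell$ by $G_S(f)$ one builds an explicit matrix $K$ (obtained from the offending matrix $M$ by deleting the last column, negating certain rows, and permuting them) whose columns lie in $R(B_\ell)$ while $f(K) \notin S(B_\ell)$, contradicting $f \in \Pol B_\ell$. The corollary merely repackages Lemmas~\ref{lemma:anytoS}, \ref{lemma:ST:fNotinBlimpliesgNotinBl} and \ref{lemma:ST:fInBlImpliesgInBl} into the uniform ``$G_C$'' format that will later feed into the proof of Lemma~\ref{lem:GCf}.
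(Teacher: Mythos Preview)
Your proposal is correct and matches the paper's approach exactly: the paper's proof is a one-line ``This brings together Lemmas~\ref{lemma:anytoS}, \ref{lemma:ST:fNotinBlimpliesgNotinBl} and \ref{lemma:ST:fInBlImpliesgInBl}'', and you have simply spelled out how the three pieces fit (membership in $S$ from the first lemma, the two directions of the equivalence from the other two via contraposition). The extra paragraph recapitulating the matrix argument inside Lemma~\ref{lemma:ST:fInBlImpliesgInBl} is not needed here, but it does no harm.
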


\begin{proof}
This brings together Lemmas~\ref{lemma:anytoS}, \ref{lemma:ST:fNotinBlimpliesgNotinBl} and \ref{lemma:ST:fInBlImpliesgInBl}.
\end{proof}

% \begin{corollary} \label{corollary:GSisgood}
%   Let $f \colon \IB^n \to \IB$ with $f \in \Pol B_{l} \setminus \Pol B_{l+1}$ for some $l \geq 2$.
%   Then $G_{S}(f) \in S$ and $G_{S}(f)\in \Pol B_{l} \setminus \Pol B_{l+1}$.
% \end{corollary}

\subsection{Construction of $G_{M_c}(f)$ and $G_{SM}(f)$}
\label{subsec:SMT}

% As mentioned in the beginning of this section, we will construct a function $g_{SM}(f) \in SM$ for every
% function $f \in S$, and then define $G_{SM}(f)$ for every Boolean function $f$.

Let $f \colon \IB^n \to \IB$. We define the Boolean function
$G_{M_c}(f) \colon \IB^{2n} \to \IB$ by the following rules
\begin{itemize}
\item If $w(\xx) < n$, then $G_{M_c}(f)(\xx) := 0$.
\item If $w(\xx) > n$, then $G_{M_c}(f)(\xx) := 1$.
\item If $\xx = (\aa, \overline{\aa})$ for some $\aa \in \IB^n$, then $G_{M_c}(f)(\xx) := f(\aa)$.
\item If $w(\xx) = n$ and there exists $i \in [n]$ such that $x_i = x_{n+i}$ and $x_j \neq x_{n+j}$ for all $j < i$, then $G_{M_c}(f)(\xx) := x_i$.
\end{itemize}
%
% \begin{align*}
%   G_{M_c}(f)(\xx) & := 0 \text{ if } w(\xx) < n, \\
%   G_{M_c}(f)(\xx) & := 1 \text{ if } w(\xx) > n, \\
%   \intertext{If $w(\xx) = n$ we have the following cases:}
%   G_{M_c}(f)(\aa, \overline{\aa}) & := f(\aa) \text{ for all } \aa \in \IB^n, \\
%   G_{M_c}(f)(x_1,\dots,x_{2n}) & := 0 \text{ if } \exists i \in [n]: x_i = x_{n+i} = 0 \land \forall j < i: x_j \neq x_{n+j}, \\
%   G_{M_c}(f)(x_1,\dots,x_{2n}) & := 1 \text{ if } \exists i \in [n]: x_i = x_{n+i} = 1 \land \forall j < i: x_j \neq x_{n+j}.
% \end{align*}
It is easy to verify that the function $G_{M_c}(f)$ is defined on every tuple $\xx \in \IB^{2n}$.

\begin{lemma}
\label{lemma:StoSM}
Let $f \colon \IB^n \to \IB$.
\begin{enumerate}[label=\rm (\roman*)]
\item\label{lemma:StoSM:item1}
$G_{M_c}(f) \in M_c$, i.e., $G_{M_c}(f)$ is monotone and constant-preserving.
\item\label{lemma:StoSM:item2}
If $f$ is self-dual, then $G_{M_c}(f)$ is self-dual.
\end{enumerate}
\end{lemma}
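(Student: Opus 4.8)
The plan is to verify the two assertions directly from the case-by-case definition of $G_{M_c}(f)$, exploiting the fact that the defining rules depend only on the Hamming weight of $\xx$ in the ``bulk'' regions $w(\xx) < n$ and $w(\xx) > n$, and that on the critical layer $w(\xx) = n$ the value is either read off from the first coordinate $i$ where $x_i = x_{n+i}$, or, on the antidiagonal tuples $\xx = (\aa, \comp{\aa})$, equal to $f(\aa)$. Throughout I would write $g := G_{M_c}(f)$ and keep in mind the partition of $\IB^{2n}$ into the three weight regions, the antidiagonal set $D := \{(\aa,\comp{\aa}) : \aa \in \IB^n\}$ (which lies inside the layer $w(\xx)=n$ and is disjoint from the tuples handled by the fourth rule, since for $(\aa,\comp{\aa})$ one has $x_j \neq x_{n+j}$ for \emph{all} $j$).

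For part~\ref{lemma:StoSM:item1}, constant-preservation is immediate: $g(\00) = 0$ since $w(\00) = 0 < n$, and $g(\11) = 1$ since $w(\11) = 2n > n$. For monotonicity I would take $\xx \leq \yy$ with $\xx \neq \yy$ and show $g(\xx) \leq g(\yy)$; it suffices to treat the case where $\yy$ is obtained from $\xx$ by flipping a single $0$ to a $1$, so $w(\yy) = w(\xx)+1$. If $w(\xx) < n-1$ or $w(\xx) > n$, both values are forced by the weight rules and there is nothing to prove; likewise if $w(\yy) > n$ then $g(\yy) = 1 \geq g(\xx)$, and if $w(\xx) < n$ then $g(\xx) = 0 \leq g(\yy)$. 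The only genuine cases are therefore the two transitions touching the critical layer: (a) $w(\xx) = n$, $w(\yy) = n+1$, where $g(\yy) = 1$ and there is nothing to check; and (b) $w(\xx) = n-1$, $w(\yy) = n$, where $g(\xx) = 0$ and there is again nothing to check. Hence $g$ is monotone. (One should double-check that no transition with $w(\xx) = n = w(\yy)$ can occur, which is clear since flipping a bit changes the weight.) This makes part~\ref{lemma:StoSM:item1} essentially bookkeeping.

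For part~\ref{lemma:StoSM:item2}, assume $f = \dual{f}$ and compute $\dual{g}(\xx) = \comp{g(\comp{\xx})}$ case by case according to where $\comp{\xx}$ falls, noting $w(\comp{\xx}) = 2n - w(\xx)$. If $w(\xx) < n$ then $w(\comp{\xx}) > n$, so $g(\comp{\xx}) = 1$ and $\dual{g}(\xx) = 0 = g(\xx)$; symmetrically for $w(\xx) > n$. If $\xx = (\aa,\comp{\aa}) \in D$ then $\comp{\xx} = (\comp{\aa}, \aa)$, which is \emph{not} of the form $(\bb,\comp{\bb})$ unless $\aa = \comp{\aa}$ (impossible), so $\comp\xx$ is handled by the fourth rule: the first index $i$ with $\comp{x}_i = \comp{x}_{n+i}$ is the first $i$ with $x_i = x_{n+i}$, which again never happens on $D$ — so in fact I must instead observe that on $D$ one has $x_j \ne x_{n+j}$ for all $j$, hence $\comp\xx$ also lies on the antidiagonal set with $\comp\xx = (\comp\aa, \overline{\comp\aa})$, giving $g(\comp\xx) = f(\comp\aa)$ and $\dual g(\xx) = \comp{f(\comp\aa)} = \dual f(\aa) = f(\aa) = g(\xx)$. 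Finally, on the remaining part of the layer $w(\xx) = n$ (fourth rule), let $i$ be least with $x_i = x_{n+i}$; then for $\comp\xx$ the same index $i$ is least with $\comp{x}_i = \comp{x}_{n+i}$ and $x_j \ne x_{n+j} \iff \comp x_j \ne \comp x_{n+j}$, so $g(\comp\xx) = \comp{x}_i$ and $\dual g(\xx) = \comp{\comp{x}_i} = x_i = g(\xx)$. Thus $\dual g = g$.

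The only subtle point — and the place where the argument could go wrong if one is careless — is correctly keeping the antidiagonal tuples $D$ separate from the tuples governed by the fourth rule, and checking that complementation maps each of these sets into itself and interacts correctly with the "first disagreeing coordinate" selection; once that is pinned down, both parts reduce to a short case analysis.
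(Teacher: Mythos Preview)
Your proposal is correct and follows essentially the same case analysis as the paper. Two minor points: the paper handles monotonicity without the single-bit-flip reduction, simply noting that $\xx < \yy$ forces $w(\xx) < w(\yy)$ and hence either $w(\xx) < n$ or $w(\yy) > n$; and in your self-duality argument the false start claiming $(\comp{\aa},\aa)$ is not of the form $(\bb,\comp{\bb})$ is wrong (take $\bb = \comp{\aa}$), though you catch and correct this yourself, so just clean that passage up before writing the proof out.
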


\begin{proof}
Let $g := G_{M_c}(f)$.

\begin{asparaenum}[(i)]
\item
  Let $\xx,\yy \in \IB^{2n}$ with $\xx < \yy$. Then $w(\xx) < w(\yy)$ and one of the following cases applies: 
$w(\xx) < n$ or
$w(\yy) > n$.
In the former case, we have $g(\xx) = 0 \leq g(\yy)$;
in the latter case, we have $g(\xx) \leq 1 = g(\yy)$.
We conclude that $g$ is monotone.
%   \begin{itemize}
%     \item $w(\xx) < n$. Then  $g(\xx) = 0 \leq g(\yy)$;
%     \item $w(\yy) > n$. Then  $g(\xx) \leq 1 = g(\yy)$.
%   \end{itemize}
%   Thus $g$ is monotone.

Since $w(\00) = 0 < n$ and $w(\11) = 1 > n$, it holds that $f(\00) = 0$ and $f(\11) = 1$, i.e., $f$ preserves both constants.

\item
  Assume that $f$ is self-dual.
  Let $\xx \in \IB^{2n}$.
   
  If $w(\xx) > n$ then $w(\overline{\xx}) < n$, and thus $(g(\xx),g(\overline{\xx})) = (1,0)$. Similarly,
  if $w(\xx) < n$ then $w(\overline{\xx}) > n$, and thus $(g(\xx),g(\overline{\xx})) = (0,1)$.

  If $\xx = (\aa, \overline{\aa})$ for some $\aa \in \IB^n$, then 
  $(g(\xx), g(\overline{\xx})) = (f(\aa), f(\overline{\aa})) \in \{ (0,1),(1,0) \}$ since $f$ is self-dual.

  Otherwise, there is some $i \in [m]$ with $x_i = x_{m+i}$ and $x_j \neq x_{m+j}$ for all $j < i$.
  This holds also for the negation of $\xx$, and thus 
  $(g(\xx), g(\overline{\xx})) \in \{ (0,1),(1,0) \}$.

  We conclude that $g$ is self-dual.
\qedhere
\end{asparaenum}
\end{proof}

\begin{lemma} \label{lemma:SM:fnotinBlimpliesgnotinBl}
Let $f \colon \IB^n \to \IB$. If $f \notin \Pol B_\ell$ for some $\ell \geq 2$, then $G_{M_c}(f) \notin \Pol B_\ell$.
\end{lemma}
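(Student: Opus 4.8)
The plan is to prove this ``forward'' half of the equivalence in Lemma~\ref{lem:GCf} by directly lifting a $B_\ell$-violation of $f$ to a $B_\ell$-violation of $g := G_{M_c}(f)$. First I would apply Lemma~\ref{lem:PolBl}: since $f \notin \Pol B_\ell$, there are tuples $\aa_1, \dots, \aa_\ell \in f^{-1}(0)$ and $\bb_1, \dots, \bb_\ell \in f^{-1}(1)$ with
\[
\aa_1 + \dots + \aa_\ell = \bb_1 + \dots + \bb_\ell
\]
(standard vector addition in $\IR^n$). The key device is the ``antipodal'' embedding $\cc \mapsto (\cc, \overline{\cc}) \in \IB^{2n}$: every tuple of this shape has Hamming weight exactly $n$ and satisfies $x_i \neq x_{n+i}$ for all $i$, so it is never caught by the fourth clause in the definition of $G_{M_c}(f)$, whence the third clause genuinely applies and gives $g(\cc, \overline{\cc}) = f(\cc)$.

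Next I would form the $2\ell \times 2n$ matrix $M$ whose rows are, in order, $(\aa_1, \overline{\aa_1}), \dots, (\aa_\ell, \overline{\aa_\ell}), (\bb_1, \overline{\bb_1}), \dots, (\bb_\ell, \overline{\bb_\ell})$, and verify that each of its $2n$ columns lies in $R(B_\ell)$. For a column index $j$ with $1 \leq j \leq n$, membership in $R(B_\ell)$ means $\sum_{i=1}^\ell a_{i,j} = \sum_{i=1}^\ell b_{i,j}$, which is precisely the $j$-th coordinate of the displayed equation. For a column index $n+j$ with $1 \leq j \leq n$, the entries are the complements of those of column $j$, so the required equality becomes $\ell - \sum_{i=1}^\ell a_{i,j} = \ell - \sum_{i=1}^\ell b_{i,j}$, which again follows from the same coordinate equation. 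Hence all columns of $M$ belong to $R(B_\ell)$.

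Finally I would evaluate $g$ on the rows of $M$. By the third defining clause, $g(\aa_i, \overline{\aa_i}) = f(\aa_i) = 0$ for $i = 1, \dots, \ell$ and $g(\bb_i, \overline{\bb_i}) = f(\bb_i) = 1$ for $i = 1, \dots, \ell$, so the image of $M$ under $g$ is $(\underbrace{0, \dots, 0}_\ell, \underbrace{1, \dots, 1}_\ell) \notin S(B_\ell)$. Since the columns of $M$ belong to $R(B_\ell)$, this witnesses $G_{M_c}(f) = g \notin \Pol B_\ell$, completing the proof.

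I do not anticipate a real obstacle here: this is exactly the direction where the antipodal embedding transports an $f$-witness to a $g$-witness with no loss. The only place requiring a moment's care is confirming that the rows of $M$ are evaluated via the third clause of the definition of $G_{M_c}(f)$ rather than the first, second, or fourth, which is immediate from the weight/antipodality observation above. The genuinely substantive companion statement, namely $f \in \Pol B_\ell \Rightarrow G_{M_c}(f) \in \Pol B_\ell$, is the one that will require an argument of the flavour of Lemma~\ref{lemma:ST:fInBlImpliesgInBl} and is handled separately.
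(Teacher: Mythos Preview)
Your proof is correct and is essentially the same argument as the paper's, just phrased in the row (asummability) language via Lemma~\ref{lem:PolBl} rather than directly in the column language: the paper simply takes columns $\yy_1,\dots,\yy_n \in R(B_\ell)$ witnessing $f \notin \Pol B_\ell$, observes that $\overline{\yy}_1,\dots,\overline{\yy}_n \in R(B_\ell)$ as well, and notes $g(\yy_1,\dots,\yy_n,\overline{\yy}_1,\dots,\overline{\yy}_n)=f(\yy_1,\dots,\yy_n)\notin S(B_\ell)$. Your matrix $M$ is exactly this matrix viewed from the rows, so the two arguments coincide.
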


\begin{proof}
  Let $f \notin \Pol B_\ell$ and $g := G_{M_c}(f)$. 
  Then there are $\yy_1,\dots,\yy_n \in R(B_\ell)$ with $f(\yy_1,\dots,\yy_n) \notin S(B_\ell)$.
  Also $\overline{\yy}_1,\dots,\overline{\yy}_n \in R(B_\ell)$ and thus
  \[ g(\yy_1,\dots,\yy_n,\overline{\yy}_1,\dots,\overline{\yy}_n) = f(\yy_1,\dots,\yy_n) \notin S(B_\ell). \]
  Therefore $g \notin \Pol B_\ell$.
\end{proof}

\begin{lemma} \label{lemma:SM:finBlimpliesginBl}
  Let $f \colon \IB^n \to \IB$ with $f \in \Pol B_\ell$ for some $\ell \geq 2$.
  Then $G_{M_c}(f) \in \Pol B_\ell$.
\end{lemma}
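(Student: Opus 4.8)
The plan is to argue by contrapositive, mirroring the structure of the proof of Lemma~\ref{lemma:ST:fInBlImpliesgInBl}: assume $g := G_{M_c}(f) \notin \Pol B_\ell$, fix a witnessing $2\ell \times 2n$ matrix $M$ whose columns lie in $R(B_\ell)$ but with $g$ applied rowwise giving (after the usual swap of the first $\ell$ rows with the last $\ell$ rows) the forbidden tuple $\zz = (\underbrace{0,\dots,0}_\ell,\underbrace{1,\dots,1}_\ell)$, and from $M$ manufacture a $2\ell \times n$ matrix $K$ with columns in $R(B_\ell)$ and $f(K^1)=\dots=f(K^\ell)=0$, $f(K^{\ell+1})=\dots=f(K^{2\ell})=1$, contradicting $f \in \Pol B_\ell$.

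First I would record the key rigidity fact coming from the definition of $g = G_{M_c}(f)$: each row $M^i$ has $g(M^i) \in \{0,1\}$ as prescribed, so for the first $\ell$ rows $g(M^i)=0$ and for the last $\ell$ rows $g(M^i)=1$. Now split the rows according to which clause of the definition of $G_{M_c}(f)$ applies to $M^i$. A row with $w(M^i) < n$ automatically gives $g(M^i)=0$, a row with $w(M^i) > n$ gives $g(M^i)=1$, a "balanced'' row of the form $(\aa,\comp{\aa})$ gives $g(M^i)=f(\aa)$, and a balanced row not of that form gives $g(M^i)=x_i$ for the distinguished coordinate. The plan is to replace each of the first $\ell$ rows by an $n$-tuple $K^i$ with $f(K^i)=0$ and each of the last $\ell$ rows by an $n$-tuple $K^i$ with $f(K^i)=1$, while controlling column sums. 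For a balanced row $(\aa,\comp{\aa})$ this is immediate: set $K^i := \aa$, so $f(K^i)=g(M^i)$ is already the right value, and the contribution of $K^i$ to the first-half column sum is $\aa$, i.e. exactly the first $n$ entries of $M^i$. The harder rows are the unbalanced ones and the weight-$\neq n$ ones, where $g(M^i)$ did not come from $f$ at all; for those I need to substitute some genuine $f$-true or $f$-false point $\bb_0$ resp.\ $\aa_0$ (these exist since $f \notin \Pol B_\ell$ forces $f$ nonconstant), and then compensate the column-sum bookkeeping using the columns of $M$ that were "spent'' on those rows.

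The main obstacle is precisely this bookkeeping: after the substitutions, I must still have every column $K_j \in R(B_\ell)$, i.e.\ $\sum_{i=1}^\ell k_j^i = \sum_{i=\ell+1}^{2\ell} k_j^i$. The idea is to exploit the symmetry of $B_\ell$ (total symmetry within the first $\ell$ rows and within the last $\ell$ rows, plus invariance under swapping the two blocks) to first normalize $M$ so that rows of the same "type'' are grouped together, and then pair up rows across the two halves so that the discrepancies introduced by the substitutions cancel. Concretely, the relation $\sum_{i=1}^n x_i = \sum_{i=n+1}^{2n} x_i$ satisfied by each column $M_j$ links the number of $1$'s in positions $1..n$ to those in $n+1..2n$; combined with the row-weight information ($w(M^i)<n$, $=n$, or $>n$) and the fact that $\sum_i g(M^i)$ over the first half is $0$ while over the second half is $\ell$, one extracts enough combinatorial constraints to match "light'' rows in the first block with "heavy'' rows in the second block, and to handle leftover unbalanced rows in pairs via the distinguished-coordinate value. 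I expect the verification that the resulting $K$ has all columns in $R(B_\ell)$ to reduce, row-block by row-block, to arithmetic identities of the same flavour as \eqref{equation:ST:abEqualscd} in the proof of Lemma~\ref{lemma:ST:fInBlImpliesgInBl}, and the verification $f(K^i)=0$ (resp.\ $1$) to a direct case check against the four clauses defining $G_{M_c}(f)$.
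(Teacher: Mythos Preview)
Your plan has a genuine gap, and it misses the key structural observation that makes the lemma work. You propose to \emph{substitute} fixed points $\aa_0 \in f^{-1}(0)$, $\bb_0 \in f^{-1}(1)$ for the ``hard'' rows (those with $w(M^i)\neq n$ or with $w(M^i)=n$ but not of the form $(\aa,\comp{\aa})$), and then repair the column sums by some unspecified pairing. But once you replace a row of $M$ by an arbitrary $\aa_0$ or $\bb_0$, the contribution to each column $K_j$ is $(\aa_0)_j$ or $(\bb_0)_j$, which bears no relation whatsoever to the original entries $m^i_j$, $m^i_{n+j}$ of that row. There is no symmetry of $B_\ell$ and no pairing of rows that will cancel such arbitrary discrepancies across all $n$ columns simultaneously; the analogy with equation~\eqref{equation:ST:abEqualscd} in Lemma~\ref{lemma:ST:fInBlImpliesgInBl} breaks down because there the modification (negating and swapping blocks of rows) was dictated by the structure of $M$ itself, not imported from outside. (Also, your justification that $\aa_0,\bb_0$ exist ``since $f\notin\Pol B_\ell$'' is backwards: you are assuming $f\in\Pol B_\ell$.)

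The paper's proof avoids all of this by showing that \emph{the hard rows do not occur}. First, $g(M^i)=0$ forces $w(M^i)\le n$ and $g(M^i)=1$ forces $w(M^i)\ge n$; summing and using $M_j\in R(B_\ell)$ for every $j$ gives $\sum_{i\le\ell}w(M^i)=\sum_{i>\ell}w(M^i)$, whence every row has weight exactly $n$. Second, suppose some row satisfies $m^i_b=m^i_{n+b}$; take $b$ minimal over all rows. By minimality, for every row $i'$ and every $a<b$ one has $m^{i'}_a\neq m^{i'}_{n+a}$, so the fourth clause of $G_{M_c}$ (together with $g(M^{i'})\in\{0,1\}$ as prescribed) forces $(m^{i'}_b,m^{i'}_{n+b})\neq(1,1)$ when $i'\le\ell$ and $\neq(0,0)$ when $i'>\ell$, with at least one strict case. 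This makes $\sum_{i'\le\ell}(m^{i'}_b+m^{i'}_{n+b}) < \sum_{i'>\ell}(m^{i'}_b+m^{i'}_{n+b})$, contradicting $M_b,M_{n+b}\in R(B_\ell)$. Hence every row is of the form $M^i=(\aa_i,\comp{\aa_i})$, and then $K$ is simply the first $n$ columns of $M$: no substitution, no bookkeeping, and $f(K_1,\dots,K_n)=g(M_1,\dots,M_{2n})=\zz\notin S(B_\ell)$ is immediate.
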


\begin{proof}
Let $g := G_{M_c}(f)$.
Suppose, on the contrary, that $g \notin \Pol B_\ell$. Then there is some matrix $M$ given by
  \[
    M = \begin{pmatrix}
      m_1^1 & m_2^1 & \dots & m_{2n}^1 \\
      m_1^2 & m_2^2 & \dots & m_{2n}^2 \\
      \vdots & \vdots &  & \vdots \\
      m_1^{2\ell} & m_2^{2\ell} & \dots & m_{2n}^{2\ell}
    \end{pmatrix} = \begin{pmatrix} 
      M^1 \\
      M^2 \\
      \vdots \\
      M^{2\ell}
    \end{pmatrix}
    = (M_1,M_2,\dots,M_{2n}),
  \]
  i.e., $M^1,\dots,M^{2\ell} \in \IB^{2n}$ are the rows of $M$, 
  and $M_1,\dots,M_{2n} \in \IB^{2\ell}$ are the columns of $M$, such that
  \begin{itemize}
    \item $M_1,\dots,M_{2n} \in R(B_\ell)$, and
    \item $\zz := g(M_1,\dots,M_{2n}) := \begin{pmatrix} g(M^1) \\ \vdots \\ g(M^{2\ell}) \end{pmatrix} 
        \notin S(B_\ell)$.
  \end{itemize}
  Thus $\zz \in \{ (\underbrace{0,\dots,0}_\ell,\underbrace{1,\dots,1}_\ell), 
  (\underbrace{1,\dots,1}_\ell,\underbrace{0,\dots,0}_\ell)\}$.
  As $B_\ell$ is invariant under swapping the first $\ell$ coordinates with the last $\ell$ coordinates, we can
  assume that $\zz = (\underbrace{0,\dots,0}_\ell,\underbrace{1,\dots,1}_\ell)$.

  We have the following possibilities
  for $M^i$ with $1 \leq i \leq 2\ell$:
  \begin{enumerate}[label=(\roman*)]
    \item \label{enum:finQlimpliesginQl:smallweight}
      $w(M^i) \neq n$;
    \item \label{enum:finQlimpliesginQl:equalpairs}
      $w(M^i) = n$ and there is some $b \in [n]$ with $m^i_b = m^i_{n+b}$;
    \item \label{enum:finQlimpliesginQl:fromf}
      $w(M^i) = n$ and $m^i_b \neq m^i_{n+b}$ for all $b \in [n]$, i.e., there is some $\aa_i \in \IB^n$ with
      $M^i = (\aa_i,\overline{\aa_i})$.
  \end{enumerate}

  We show that case \ref{enum:finQlimpliesginQl:smallweight} cannot happen, since the weight of each row $M^i$ of $M$ is exactly $n$.
  Since $g(M^i) = 0$ for $1 \leq i \leq \ell$, we have $w(M^i) \leq n$ for $1 \leq i \leq \ell$. 
  Similarly, we have $w(M^i) \geq n$ for $\ell+1 \leq i \leq 2\ell$.
  Thus $\sum_{i=1}^\ell w(M^i) \leq n\ell$ and $\sum_{i=\ell+1}^{2\ell} w(M^i) \geq n\ell$.
  Because $M_j \in R(B_\ell)$ for $1 \leq j \leq 2n$, we get
  \[
    \sum_{i=1}^\ell w(M^i) = \sum_{i=1}^\ell \sum_{j=1}^{2n} m_j^i 
     = \sum_{j=1}^{2n} \sum_{i=1}^\ell m_j^i 
     = \sum_{j=1}^{2n} \sum_{i=\ell+1}^{2\ell} m_j^i 
     = \sum_{i=\ell+1}^{2\ell} \sum_{j=1}^{2n} m_j^i 
     = \sum_{i=\ell+1}^{2\ell} w(M^i)
    \]
  Therefore $\sum_{i=1}^\ell w(M^i) = \sum_{i=\ell+1}^{2\ell} w(M^i) = n\ell$, and
  $w(M^i) = n$ for $1 \leq i \leq 2\ell$.
  Thus the case \ref{enum:finQlimpliesginQl:smallweight} cannot happen for $M^i$.
  
  We will show that case \ref{enum:finQlimpliesginQl:equalpairs} is also not possible.
  Suppose, on the contrary, that there is some $i \in [2\ell]$ and some $b \in [n]$ such that
  $m^i_b = m^i_{n+b}$, and $m^i_a \neq m^i_{n+a}$ for all $a < b$. We can assume
  that $b$ is the smallest number with this property.

  Now we consider the weights of $M_b$ and $M_{n+b}$.
  Because $b$ is minimal, we have that $m^{i'}_a \neq m^{i'}_{n+a}$ for all $a < b$.
  Thus we have $(m^{i'}_b, m^{i'}_{n+b}) \in \{ (0,0),(0,1),(1,0) \}$
  for $1 \leq i' \leq \ell$, and $(m^{i'}_b, m^{i'}_{n+b}) \in \{ (0,1),(1,0),(1,1) \}$ for $\ell+1 \leq i' \leq 2\ell$. 
  Then 
  \begin{align*}
    \sum_{i' = 1}^{\ell} (m^{i'}_b + m^{i'}_{n+b}) & 
%     = \sum_{\substack{i' = 1 \\ i' \neq i}}^\ell (m^{i'}_b + m^{i'}_{n+b})
    \leq n, \\
    \sum_{i' = \ell+1}^{2\ell} (m^{i'}_b + m^{i'}_{n+b}) &  
%     \phantom{ {} = \sum_{\substack{i' = 1 \\ i' \neq i}}^\ell (m^{i'}_b + m^{i'}_{n+b}) }
    \geq n,
  \end{align*}
and at least one of these inequalities holds strictly.
  This implies that one of the following holds:
  \begin{align*}
    \sum_{i' = 1}^\ell m^{i'}_b    & < \sum_{i' = \ell+1}^{2\ell} m^{i'}_b         \quad \text{or} \\
    \sum_{i' = 1}^\ell m^{i'}_{n+b} & < \sum_{i' = \ell+1}^{2\ell} m^{i'}_{n+b}.
  \end{align*}
  This means that $M_b \notin R(B_\ell)$ or $M_{n+b} \notin R(B_\ell)$, in contradiction to the assumption.
  Thus no such $b$ exists, and case \ref{enum:finQlimpliesginQl:equalpairs} cannot happen.

  Thus case \ref{enum:finQlimpliesginQl:fromf} applies for all $M^i$, i.e.,
  $M^i = (\aa_i,\overline{\aa_i})$ for some $\aa_i \in \IB^n$ holds for all $i \in [2\ell]$.
  By the definition of $g$ and since $f \in \Pol B_\ell$, we obtain  
  \[
    \zz  = g\begin{pmatrix} M^1 \\ \vdots \\ M^{2\ell} \end{pmatrix} 
       = f\begin{pmatrix} \aa_1 \\ \vdots \\ \aa_{2\ell} \end{pmatrix}
       = f(M_1,\dots,M_n)
       \in S(B_\ell).
  \]
  But this is a contradiction to $\zz \notin S(B_\ell)$.
  Thus the matrix $M$ cannot exist, and we have $g \in \Pol B_\ell$.
\end{proof}

\begin{corollary}
\label{cor:GMc}
For any Boolean function $f \colon \IB^n \to \IB$, $G_{M_c}(f) \in M_c$ and for all $\ell \geq 2$, $f \in \Pol B_\ell$ if and only if $G_{M_c}(f) \in \Pol B_\ell$.
\end{corollary}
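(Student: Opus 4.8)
The plan is to obtain the corollary as a direct synthesis of the three lemmas just proved for the construction $G_{M_c}$. First, the membership claim $G_{M_c}(f) \in M_c$ is nothing but the first assertion of Lemma~\ref{lemma:StoSM}, which established that $G_{M_c}(f)$ is monotone and preserves the constants $0$ and $1$; so this part requires no further argument.

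Next I would split the equivalence $f \in \Pol B_\ell \iff G_{M_c}(f) \in \Pol B_\ell$ into its two implications. The implication from left to right is exactly Lemma~\ref{lemma:SM:finBlimpliesginBl}. For the converse I would argue by contraposition: assuming $f \notin \Pol B_\ell$, Lemma~\ref{lemma:SM:fnotinBlimpliesgnotinBl}, which shows that feeding the columns $\yy_1, \dots, \yy_n, \comp{\yy}_1, \dots, \comp{\yy}_n \in R(B_\ell)$ into $G_{M_c}(f)$ recovers a violating output of $f$, yields $G_{M_c}(f) \notin \Pol B_\ell$. Combining the two implications gives the stated equivalence for every $\ell \geq 2$, and together with the first paragraph this proves the corollary.

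Since all of the ingredients are already in place, the corollary itself presents no obstacle; the substance of the matter lies in Lemma~\ref{lemma:SM:finBlimpliesginBl}, whose proof is the delicate part. There one takes a hypothetical counterexample matrix $M$ for $G_{M_c}(f)$ and uses a Hamming-weight count over the rows and columns of $M$ --- together with the fact that each column lies in $R(B_\ell)$ --- to force every row of $M$ into the form $(\aa_i, \comp{\aa_i})$, at which point the definition of $G_{M_c}$ reduces the configuration to one contradicting $f \in \Pol B_\ell$. The corollary simply packages this, along with the easy constructions establishing monotonicity, constant-preservation, and the non-preservation direction, into a single statement.
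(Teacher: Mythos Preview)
Your proposal is correct and mirrors the paper's own proof, which simply states that the corollary brings together Lemma~\ref{lemma:StoSM}\ref{lemma:StoSM:item1}, Lemma~\ref{lemma:SM:fnotinBlimpliesgnotinBl}, and Lemma~\ref{lemma:SM:finBlimpliesginBl}. Your additional paragraph sketching the weight-counting argument behind Lemma~\ref{lemma:SM:finBlimpliesginBl} is accurate and helpful, though not required for the corollary itself.
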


\begin{proof}
This brings together Lemmas~\ref{lemma:StoSM}\ref{lemma:StoSM:item1}, \ref{lemma:SM:fnotinBlimpliesgnotinBl} and \ref{lemma:SM:finBlimpliesginBl}.
\end{proof}

Let $G_{SM}(f) := G_{M_c}(G_{S}(f))$. Then we can conclude the following corollary from the preceding lemmas.

\begin{corollary}
\label{cor:GSM}
For any Boolean function $f \colon \IB^n \to \IB$, $G_{SM}(f) \in SM$ and for all $\ell \geq 2$, $f \in \Pol B_\ell$ if and only if $G_{SM}(f) \in \Pol B_\ell$.
\end{corollary}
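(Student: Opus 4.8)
The plan is to obtain this corollary for free by composing the two constructions that have already been analysed, since $G_{SM}(f)$ is defined to be $G_{M_c}(G_S(f))$: Corollary~\ref{cor:GS} handles the inner $G_S$, and Corollary~\ref{cor:GMc} (together with Lemma~\ref{lemma:StoSM}) handles the outer $G_{M_c}$.

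First I would settle membership in $SM$. By Corollary~\ref{cor:GS}, the function $g := G_S(f)$ lies in $S$, i.e.\ it is self-dual. Applying the $M_c$-construction to $g$, Lemma~\ref{lemma:StoSM}\ref{lemma:StoSM:item1} gives that $G_{M_c}(g)$ is monotone (and constant-preserving), and Lemma~\ref{lemma:StoSM}\ref{lemma:StoSM:item2} gives that $G_{M_c}(g)$ is self-dual, precisely because its input $g$ is self-dual. Hence $G_{SM}(f) = G_{M_c}(g)$ is simultaneously self-dual and monotone, so it belongs to the clone $SM$ of self-dual monotone functions. (A monotone self-dual function automatically preserves both constants, so the two properties supplied by the two items of Lemma~\ref{lemma:StoSM} are mutually consistent; there is nothing to reconcile.)

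Next I would dispatch the $\Pol B_\ell$ equivalence as a short chain of biconditionals. Fix $\ell \geq 2$. By Corollary~\ref{cor:GS}, $f \in \Pol B_\ell$ if and only if $g = G_S(f) \in \Pol B_\ell$; by Corollary~\ref{cor:GMc} applied to $g$, the latter holds if and only if $G_{M_c}(g) \in \Pol B_\ell$, that is, $G_{SM}(f) \in \Pol B_\ell$. Stringing the two equivalences together yields the stated claim.

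I do not expect any real obstacle here, as all the substance is contained in Corollaries~\ref{cor:GS} and~\ref{cor:GMc} (and ultimately in Lemmas~\ref{lemma:anytoS}--\ref{lemma:ST:fInBlImpliesgInBl} and~\ref{lemma:StoSM}--\ref{lemma:SM:finBlimpliesginBl}). The one point that must not be overlooked is that $G_{M_c}$ is only guaranteed to preserve self-duality when its argument is self-dual; this is exactly why the composition is taken in the order $G_{M_c}\circ G_S$ and not the reverse, and since $G_S(f)$ is self-dual by Corollary~\ref{cor:GS}, the hypothesis of Lemma~\ref{lemma:StoSM}\ref{lemma:StoSM:item2} is met. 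So the proof is essentially a one-line invocation of the two preceding corollaries.
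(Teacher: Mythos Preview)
Your proposal is correct and follows essentially the same approach as the paper: first use Corollary~\ref{cor:GS} to get $G_S(f)\in S$, then Lemma~\ref{lemma:StoSM} to conclude $G_{SM}(f)=G_{M_c}(G_S(f))\in SM$, and finally chain the $\Pol B_\ell$ equivalences from Corollaries~\ref{cor:GS} and~\ref{cor:GMc}. Your additional remark about why the composition order matters is a nice observation but not needed for the proof itself.
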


% \begin{corollary} \label{corollary:GSMisgood}
%   Let $f: \IB^n \to \IB$ with $f \in \Pol B_{l} \setminus \Pol B_{l+1}$ for some $l \geq 2$.
%   Then $G_{SM}(f) \in S$ and $G_{SM}(f) \in \Pol B_{l} \setminus \Pol B_{l+1}$.
% \end{corollary}

\begin{proof}
By Corollary~\ref{cor:GS}, we have $G_{S}(f) \in S$, and by Lemma~\ref{lemma:StoSM}, we get $G_{SM}(f) = G_{M_c}(G_S(f)) \in SM$.

By Corollary~\ref{cor:GS}, the condition $f \in \Pol B_\ell$ is equivalent to $G_S(f) \in \Pol B_\ell$, which is in turn equivalent to $G_{SM}(f) = G_{M_c}(G_S(f)) \in \Pol B_\ell$ by Corollary~\ref{cor:GMc}.
\end{proof}

\subsection{Construction of $G_{U_\infty}(f)$, $G_{M_cU_\infty}(f)$ and $G_{M_cW_\infty}(f)$}
\label{subsec:MCT}

% Similar to the previous subsection, we will construct a function $G_{U_\infty}(f) \in M_cU_\infty$ for every
% function $f \in SM$, and then define $G_{M_cU_\infty}(f) \in M_cU_\infty$ for every Boolean function $f$.
% Finally, we will define $G_{M_cW_\infty}(f) \in M_cW_\infty$.

Let $f \colon \IB^n \to \IB$. Define $G_{U_\infty}(f) \colon \IB^{n+1} \to \IB$ by
\[
G_{U_\infty}(f)(x_1, \dots, x_{n+1}) = x_{n+1} \wedge f(x_1, \dots, x_n).
\]

\begin{lemma}
\label{lemma:SMtoMcUinfty}
Let $f \colon \IB^n \to \IB$.
\begin{enumerate}[label=\rm (\roman*)]
\item\label{lemma:SMtoMcUinfty:item1}
$G_{U_\infty}(f) \in U_\infty$.
\item\label{lemma:SMtoMcUinfty:item2}
If $f$ is monotone, then $G_{U_\infty}(f)$ is monotone.
\item\label{lemma:SMtoMcUinfty:item3}
If $f(\11) = 1$, then $G_{U_\infty}(f)$ preserves both constants.
\item\label{lemma:SMtoMcUinfty:item4}
If $f \in M_c$, then $G_{U_\infty}(f) \in M_cU_\infty$.
\end{enumerate}
\end{lemma}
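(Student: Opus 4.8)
The plan is to check the four items directly from the formula $G_{U_\infty}(f)(x_1, \dots, x_{n+1}) = x_{n+1} \wedge f(x_1, \dots, x_n)$, using nothing beyond the definitions of the clones $U_\infty$ and $M_c$ and elementary properties of conjunction.

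For item \ref{lemma:SMtoMcUinfty:item1}, I would invoke the standard description of $U_\infty$ (the one underlying the relations listed in Appendix~\ref{App:Post}): a Boolean function $g$ lies in $U_\infty$ precisely when all of its true points share some coordinate on which they all take the value $1$. Since $G_{U_\infty}(f)(\xx) = 1$ forces $x_{n+1} = 1$, every true point of $G_{U_\infty}(f)$ has a $1$ in position $n+1$, so $G_{U_\infty}(f) \in U_\infty$ with $n+1$ as a witness coordinate (the case $f \equiv 0$, where there are no true points, being trivial). For item \ref{lemma:SMtoMcUinfty:item2}, note that $\xx \mapsto x_{n+1}$ is monotone and, if $f$ is monotone, then so is $\xx \mapsto f(x_1, \dots, x_n)$; as $\wedge$ is monotone in each argument, their conjunction $G_{U_\infty}(f)$ is monotone. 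For item \ref{lemma:SMtoMcUinfty:item3}, I would simply evaluate at the constant tuples: $G_{U_\infty}(f)(\00) = 0 \wedge f(\00) = 0$ always, and $G_{U_\infty}(f)(\11) = 1 \wedge f(\11) = 1$ under the hypothesis $f(\11) = 1$, so $G_{U_\infty}(f)$ preserves both constants.

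Finally, item \ref{lemma:SMtoMcUinfty:item4} is just the combination of the previous three: if $f \in M_c$, then $f$ is monotone and $f(\11) = 1$, hence by \ref{lemma:SMtoMcUinfty:item2} and \ref{lemma:SMtoMcUinfty:item3} the function $G_{U_\infty}(f)$ is monotone and preserves both constants, i.e.\ $G_{U_\infty}(f) \in M_c$, and by \ref{lemma:SMtoMcUinfty:item1} it also lies in $U_\infty$, so $G_{U_\infty}(f) \in M_c \cap U_\infty = M_cU_\infty$. There is no genuine obstacle in this lemma; the only thing worth a second's care is phrasing item \ref{lemma:SMtoMcUinfty:item1} so that it matches exactly the form in which $U_\infty$ is recorded in the appendix, but the common-$1$-coordinate characterization is the standard one and makes the containment immediate from the shape of $G_{U_\infty}(f)$.
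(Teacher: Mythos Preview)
Your proposal is correct and matches the paper's own proof essentially line for line: item~\ref{lemma:SMtoMcUinfty:item1} via the observation that $x_{n+1}=1$ on every true point, item~\ref{lemma:SMtoMcUinfty:item3} by direct evaluation, item~\ref{lemma:SMtoMcUinfty:item4} by combining the first three, and item~\ref{lemma:SMtoMcUinfty:item2} by monotonicity of conjunction (the paper spells this out as a short case split on $x_{n+1}$, but the content is identical).
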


\begin{proof}
Let $g := G_{U_\infty}(f)$.

\ref{lemma:SMtoMcUinfty:item1}
By the definition of $g$ we have that if $g(x_1,\dots,x_{n+1}) = 1$ then $x_{n+1} = 1$. Thus $g \in U_\infty$.

\ref{lemma:SMtoMcUinfty:item2}
Let $\xx, \yy \in \IB^{n+1}$, and assume that $\xx < \yy$. If $x_{n+1} = 0$, then $g(\xx) = 0 \leq g(\yy)$. If $x_{n+1} = 1$, then also $y_{n+1} = 1$, and since $f$ is monotone, we have
\[
g(\xx) = f(x_1, \dots, x_n) \leq f(y_1, \dots, y_n) = g(\yy).
\]
We conclude that $g$ is monotone. 

\ref{lemma:SMtoMcUinfty:item3}
By the definition of $g$, we have $g(\00) = 0$. Furthermore, if $f(\11) = 1$, then we have $g(\11) = f(\11) = 1$.

\ref{lemma:SMtoMcUinfty:item4}
Follows immediately from the previous items.
\end{proof}

\begin{lemma} \label{lemma:McUinfty:fnotinBlimpliesgnotinBl}
Let $f \colon \IB^n \to \IB$. If $f \notin \Pol B_\ell$ for some $\ell \geq 2$, then $G_{U_\infty}(f) \notin \Pol B_\ell$.
\end{lemma}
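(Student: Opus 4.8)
The plan is to mimic the proofs of Lemmas~\ref{lemma:ST:fNotinBlimpliesgNotinBl} and~\ref{lemma:SM:fnotinBlimpliesgnotinBl}: exhibit a witness to the failure of preservation for $g := G_{U_\infty}(f)$ by reusing a witness for $f$ and supplying an all-ones column in the new coordinate. Concretely, assuming $f \notin \Pol B_\ell$, I would fix $\yy_1, \dots, \yy_n \in R(B_\ell)$ with $f(\yy_1, \dots, \yy_n) \notin S(B_\ell)$, and then consider the $n+1$ tuples $\yy_1, \dots, \yy_n, \11$, where $\11$ denotes the all-ones tuple in $\IB^{2\ell}$.

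First I would check that $\11 \in R(B_\ell)$: the Hamming weight of its first $\ell$ entries equals $\ell$, which equals the weight of its last $\ell$ entries, so the defining condition of $R(B_\ell)$ holds. Next I would compute $g$ applied coordinatewise to these tuples. Since $g(x_1, \dots, x_n, 1) = 1 \wedge f(x_1, \dots, x_n) = f(x_1, \dots, x_n)$, the $i$-th entry of $g(\yy_1, \dots, \yy_n, \11)$ equals $f(y^1_i, \dots, y^n_i)$, so $g(\yy_1, \dots, \yy_n, \11) = f(\yy_1, \dots, \yy_n)$. By the choice of the $\yy_j$, this tuple is not in $S(B_\ell)$. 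Therefore $g = G_{U_\infty}(f)$ does not preserve $B_\ell$, which is exactly the conclusion.

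I do not anticipate any real obstacle here: the argument is a direct one-step specialization, and the only thing to verify carefully is that the constant $1$ in the last coordinate is legitimate (i.e., that the all-ones column lies in the antecedent relation $R(B_\ell)$) and that it acts as the identity for $\wedge$ in the defining formula for $G_{U_\infty}(f)$. Both are immediate from the definitions of $B_\ell$ and of $G_{U_\infty}(f)$.
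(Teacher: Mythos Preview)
Your proposal is correct and is precisely the argument the paper intends: it explicitly states that the proof is identical to that of Lemma~\ref{lemma:ST:fNotinBlimpliesgNotinBl}, which is exactly the ``append an all-ones column and use $g(x_1,\dots,x_n,1)=f(x_1,\dots,x_n)$'' witness you describe.
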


\begin{proof}
  The proof is exactly the same as the proof of Lemma~\ref{lemma:ST:fNotinBlimpliesgNotinBl}.
\end{proof}

\begin{lemma} \label{lemma:McUinfty:finBlimpliesginBl}
Let $f \colon \IB^n \to \IB$. If $f \in \Pol B_\ell$ for some $\ell \geq 2$, then $G_{U_\infty}(f) \in \Pol B_\ell$.
\end{lemma}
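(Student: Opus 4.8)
The plan is to follow the template of the proof of Lemma~\ref{lemma:ST:fInBlImpliesgInBl}, arguing by contradiction, but the argument here should be much shorter because the last variable of $G_{U_\infty}(f)$ enters as a conjunction rather than through a case split. Write $g := G_{U_\infty}(f)$ and suppose $g \notin \Pol B_\ell$. Then there is a $2\ell \times (n+1)$ matrix $M$, with rows $M^1, \dots, M^{2\ell} \in \IB^{n+1}$ and columns $M_1, \dots, M_{n+1} \in \IB^{2\ell}$, such that $M_1, \dots, M_{n+1} \in R(B_\ell)$ but $\zz := g(M_1, \dots, M_{n+1}) \notin S(B_\ell)$. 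Since $R(B_\ell)$ and $S(B_\ell)$ are invariant under swapping the first $\ell$ rows with the last $\ell$ rows, I would assume $\zz = (\underbrace{0, \dots, 0}_{\ell}, \underbrace{1, \dots, 1}_{\ell})$.

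The one real step is to pin down the last column $M_{n+1}$. For each $i$ with $\ell + 1 \leq i \leq 2\ell$ we have $1 = g(M^i) = m^i_{n+1} \wedge f(m^i_1, \dots, m^i_n)$, so in particular $m^i_{n+1} = 1$. As $M_{n+1} \in R(B_\ell)$, the weights of its two halves agree, forcing $\sum_{i=1}^{\ell} m^i_{n+1} = \sum_{i=\ell+1}^{2\ell} m^i_{n+1} = \ell$; hence $m^i_{n+1} = 1$ for all $i \in [2\ell]$, i.e.\ $M_{n+1} = \11$. It follows that $g(M^i) = f(m^i_1, \dots, m^i_n)$ for every $i \in [2\ell]$.

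To finish, form the $2\ell \times n$ matrix $K$ by deleting the last column of $M$; its columns $K_1 = M_1, \dots, K_n = M_n$ lie in $R(B_\ell)$, and its rows satisfy $f(K^i) = g(M^i)$, so $f(K_1, \dots, K_n) = \zz \notin S(B_\ell)$, contradicting $f \in \Pol B_\ell$. Hence $g \in \Pol B_\ell$. I do not expect any serious obstacle: unlike in Lemma~\ref{lemma:ST:fInBlImpliesgInBl}, there is no need to construct an auxiliary matrix with negated and permuted rows, since the conjunction with $x_{n+1}$ forces the offending last column to be $\11$ and the violation of $f \in \Pol B_\ell$ then transfers verbatim. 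The whole argument should fit in a few lines.
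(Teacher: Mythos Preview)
Your proof is correct and follows essentially the same approach as the paper's: argue by contradiction, set up the matrix $M$, and show that the last column must be $\11$, whence the first $n$ columns witness that $f \notin \Pol B_\ell$. Your derivation of $M_{n+1} = \11$ is in fact slightly more direct than the paper's (which first uses row symmetry to normalize $M_{n+1}$ before ruling out the zeros), but the idea is the same.
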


\begin{proof}
  Let $g := G_{U_\infty}(f)$.

  Suppose, on the contrary, that $g \notin \Pol B_\ell$. Then there is some matrix $M$ given by
  \[
    M = \begin{pmatrix}
      m_1^1 & m_2^1 & \dots & m_{n+1}^1 \\
      m_1^2 & m_2^2 & \dots & m_{n+1}^2 \\
      \vdots & \vdots &  & \vdots \\
      m_1^{2\ell} & m_2^{2\ell} & \dots & m_{n+1}^{2\ell}
    \end{pmatrix} = \begin{pmatrix} 
      M^1 \\
      M^2 \\
      \vdots \\
      M^{2\ell}
    \end{pmatrix}
    = (M_1,M_2,\dots,M_{n+1}),
  \]
  i.e., $M^1,\dots,M^{2\ell} \in \IB^{n+1}$ are the rows of $M$, 
  and $M_1,\dots,M_{n+1} \in \IB^{2\ell}$ are the columns of $M$, such that
  \begin{itemize}
    \item $M_1,\dots,M_{n+1} \in R(B_\ell)$, and
    \item $\zz := g(M_1,\dots,M_{n+1}) := \begin{pmatrix} g(M^1) \\ \vdots \\ g(M^{2\ell}) \end{pmatrix} 
        \notin S(B_\ell)$.
  \end{itemize}
  Thus $\zz \in \{ (\underbrace{0,\dots,0}_\ell,\underbrace{1,\dots,1}_\ell), 
  (\underbrace{1,\dots,1}_\ell,\underbrace{0,\dots,0}_\ell)\}$.
  As $B_\ell$ is invariant under swapping the first $\ell$ coordinates with the last $\ell$ coordinates, we can
  assume that $\zz = (\underbrace{0,\dots,0}_\ell,\underbrace{1,\dots,1}_\ell)$.

  We now look at the last column $M_{n+1}$ of $M$. Since $\sum_{i=1}^\ell m_{n+1}^i = \sum_{i=\ell+1}^{2\ell} m_{n+1}^i$,
  and since $B_\ell$ is totally symmetric on the first $\ell$ rows and on the last $\ell$ rows, we can assume
  that \[
    M_{n+1} = (\underbrace{0,\dots,0}_\alpha, \underbrace{1,\dots,1}_\beta,
               \underbrace{0,\dots,0}_\alpha, \underbrace{1,\dots,1}_\beta)
  \]
  holds for some $\alpha,\beta \geq 0$ with $\alpha+\beta = \ell$.
  
  If $\alpha > 0$ then $g(M^{\ell+1}) = g(m_1^{\ell+1},\dots,m_n^{\ell+1},0) = 0 \land f(m_1^{\ell+1},\dots,m_n^{\ell+1}) = 0$,
  in contradiction to $g(M^{\ell+1}) = 1$. Thus $\alpha = 0$, and $M_{n+1} = \11$.
  But then $f(M_1,\dots,M_n) = g(M_1,\dots,M_n,\11) = \zz \notin S(B_\ell)$, which implies
  that $f \notin \Pol B_\ell$. This contradicts the assumption $f \in \Pol B_\ell$, and we conclude that
  $g \in \Pol B_\ell$.
\end{proof}

\begin{corollary}
\label{cor:GUinfty}
For any Boolean function $f \colon \IB^n \to \IB$, $G_{U_\infty}(f) \in U_\infty$ and for all $\ell \geq 2$, $f \in \Pol B_\ell$ if and only if $G_{U_\infty}(f) \in \Pol B_\ell$.
\end{corollary}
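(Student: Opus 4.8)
The plan is to obtain this corollary by assembling the three facts about $G_{U_\infty}$ established immediately above, in exactly the way Corollaries~\ref{cor:GS} and \ref{cor:GMc} were obtained from their respective lemmas. First I would cite Lemma~\ref{lemma:SMtoMcUinfty}\ref{lemma:SMtoMcUinfty:item1} for the membership assertion $G_{U_\infty}(f) \in U_\infty$; this needs nothing beyond the definition of $U_\infty$ and is already done there.

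For the equivalence $f \in \Pol B_\ell \iff G_{U_\infty}(f) \in \Pol B_\ell$, with $\ell \geq 2$ fixed, I would argue the two implications separately. The implication ``$f \in \Pol B_\ell \Rightarrow G_{U_\infty}(f) \in \Pol B_\ell$'' is precisely Lemma~\ref{lemma:McUinfty:finBlimpliesginBl}. For the converse I would argue contrapositively: by Lemma~\ref{lemma:McUinfty:fnotinBlimpliesgnotinBl}, $f \notin \Pol B_\ell$ entails $G_{U_\infty}(f) \notin \Pol B_\ell$, so $G_{U_\infty}(f) \in \Pol B_\ell$ forces $f \in \Pol B_\ell$. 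Combining the two directions yields the equivalence for every $\ell \geq 2$.

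At the level of the corollary there is no real obstacle; it is pure bookkeeping. The substantive content is hidden in Lemma~\ref{lemma:McUinfty:finBlimpliesginBl}: one starts from a hypothetical $2\ell \times (n+1)$ witness matrix $M$ for $G_{U_\infty}(f) \notin \Pol B_\ell$, normalizes its last column (using the symmetry of $B_\ell$ on the first $\ell$ and on the last $\ell$ rows) to the form $(\underbrace{0,\dots,0}_{\alpha},\underbrace{1,\dots,1}_{\beta},\underbrace{0,\dots,0}_{\alpha},\underbrace{1,\dots,1}_{\beta})$ with $\alpha+\beta=\ell$, and then observes that $\alpha>0$ would force $G_{U_\infty}(f)(M^{\ell+1})=0$, contradicting $G_{U_\infty}(f)(M^{\ell+1})=1$. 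Hence $\alpha=0$, the last column is $\11$, and the first $n$ columns of $M$ then form a witness for $f \notin \Pol B_\ell$, contradicting $f \in \Pol B_\ell$. Once that lemma is in hand, the corollary follows immediately.
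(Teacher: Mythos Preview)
Your proposal is correct and matches the paper's own proof exactly: the corollary is obtained by combining Lemma~\ref{lemma:SMtoMcUinfty}\ref{lemma:SMtoMcUinfty:item1} for the membership $G_{U_\infty}(f)\in U_\infty$ with Lemmas~\ref{lemma:McUinfty:finBlimpliesginBl} and \ref{lemma:McUinfty:fnotinBlimpliesgnotinBl} for the two directions of the equivalence. Your summary of the argument behind Lemma~\ref{lemma:McUinfty:finBlimpliesginBl} is also accurate and coincides with the paper's proof of that lemma.
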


\begin{proof}
This brings together Lemmas~\ref{lemma:SMtoMcUinfty}\ref{lemma:SMtoMcUinfty:item1}, \ref{lemma:McUinfty:fnotinBlimpliesgnotinBl} and \ref{lemma:McUinfty:finBlimpliesginBl}.
\end{proof}

Let $G_{M_cU_\infty}(f) := G_{U_\infty}(G_{M_c}(f))$ and $G_{M_cW_\infty}(f) := \dual{G_{M_cU_\infty}(f)}$.

\begin{corollary}
\label{cor:GMcUinfty}
For any Boolean function $f \colon \IB^n \to \IB$, $G_{M_cU_\infty}(f) \in M_cU_\infty$ and for all $\ell \geq 2$, $f \in \Pol B_\ell$ if and only if $G_{M_cU_\infty}(f) \in \Pol B_\ell$.
\end{corollary}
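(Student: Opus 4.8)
The plan is to derive Corollary~\ref{cor:GMcUinfty} by composing the two constructions already analyzed, exactly as the definition $G_{M_cU_\infty}(f) := G_{U_\infty}(G_{M_c}(f))$ suggests, and then invoking Corollaries~\ref{cor:GMc} and~\ref{cor:GUinfty} together with part~\ref{lemma:SMtoMcUinfty:item4} of Lemma~\ref{lemma:SMtoMcUinfty}. This is the same pattern used in the proof of Corollary~\ref{cor:GSM}, so the argument should be short and essentially bookkeeping.

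For the membership claim $G_{M_cU_\infty}(f) \in M_cU_\infty$: first, by Corollary~\ref{cor:GMc}, $G_{M_c}(f) \in M_c$, i.e.\ $G_{M_c}(f)$ is monotone and constant-preserving; in particular $G_{M_c}(f)(\11) = 1$. Then apply Lemma~\ref{lemma:SMtoMcUinfty}\ref{lemma:SMtoMcUinfty:item4} with $G_{M_c}(f)$ in place of $f$: since $G_{M_c}(f) \in M_c$, we get $G_{U_\infty}(G_{M_c}(f)) \in M_cU_\infty$, which is exactly $G_{M_cU_\infty}(f) \in M_cU_\infty$.

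For the preservation claim: fix $\ell \geq 2$. By Corollary~\ref{cor:GMc}, $f \in \Pol B_\ell$ if and only if $G_{M_c}(f) \in \Pol B_\ell$. By Corollary~\ref{cor:GUinfty} applied to the function $G_{M_c}(f)$, the latter holds if and only if $G_{U_\infty}(G_{M_c}(f)) = G_{M_cU_\infty}(f) \in \Pol B_\ell$. Chaining these two equivalences gives $f \in \Pol B_\ell \iff G_{M_cU_\infty}(f) \in \Pol B_\ell$, as required.

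There is no real obstacle here; the only thing to be careful about is that the hypotheses of the cited lemmas are genuinely met at each composition step — specifically that $G_{M_c}(f)$ lands in $M_c$ (not merely $U_\infty$ or $S$) so that Lemma~\ref{lemma:SMtoMcUinfty}\ref{lemma:SMtoMcUinfty:item4} applies, and that the ``if and only if'' in Corollary~\ref{cor:GUinfty} is stated for an arbitrary Boolean function so that it may be instantiated at $G_{M_c}(f)$. Both hold, so the proof is just the two-line composition argument above.
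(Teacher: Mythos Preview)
Your proof is correct and follows essentially the same approach as the paper: both first use Corollary~\ref{cor:GMc} to place $G_{M_c}(f)$ in $M_c$, then invoke Lemma~\ref{lemma:SMtoMcUinfty}\ref{lemma:SMtoMcUinfty:item4} for the membership claim, and chain the equivalences from Corollaries~\ref{cor:GMc} and~\ref{cor:GUinfty} for the preservation claim.
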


\begin{proof}
By Corollary~\ref{cor:GMc}, we have $G_{M_c}(f) \in M_c$, and by Lemma~\ref{lemma:SMtoMcUinfty}, we get $G_{M_cU_\infty}(f) = G_{U_\infty}(G_{M_c}(f)) \in M_cU_\infty$.

By Corollary~\ref{cor:GMc}, the condition $f \in \Pol B_\ell$ is equivalent to $G_{M_c}(f) \in \Pol B_\ell$, which in turn is equivalent to $G_{M_cU_\infty}(f) = G_{U_\infty}(G_{M_c}(f)) \in \Pol B_\ell$ by Corollary~\ref{cor:GUinfty}.
\end{proof}

Let $f \colon \IB^n \to \IB$. We define the functions $\overline{f} \colon \IB^n \to \IB$ and $f \colon \IB^n \to \IB$, for $\uu \in \IB^n$, as
\begin{align*}
\overline{f}(\aa) &= \overline{f(\aa)}, \\
f^\uu(\aa) &= f(\aa \oplus \uu).
\end{align*}
Note that $\dual{f} = \overline{f^\11}$, where $\11 := (1, \dots, 1) \in \IB^n$.

\begin{lemma}
\label{lem:dualBl}
Let $f \colon \IB^n \to \IB$, and let $\ell \geq 2$. The following are equivalent:
\begin{enumerate}[label=\rm (\roman*)]
\item $f \in \Pol B_\ell$,
\item $f^\uu \in \Pol B_\ell$ for any $\uu \in \IB^n$,
%\item $f(\overline{e^{(n)}_1}, \dots, \overline{e^{(n)}_n}) \in \Pol B_\ell$,
\item $\overline{f} \in \Pol B_\ell$,
\item $\dual{f} \in \Pol B_\ell$.
\end{enumerate}
\end{lemma}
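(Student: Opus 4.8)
The statement to prove is Lemma~\ref{lem:dualBl}: for a Boolean function $f$ and $\ell\ge 2$, the four conditions ``$f\in\Pol B_\ell$'', ``$f^\uu\in\Pol B_\ell$ for all $\uu$'', ``$\overline f\in\Pol B_\ell$'', ``$\dual f\in\Pol B_\ell$'' are equivalent. The approach I would take is to observe that each of the three operations $f\mapsto f^\uu$, $f\mapsto\overline f$, $f\mapsto\dual f$ corresponds to a symmetry of the relational constraint $B_\ell$ itself (or of the pair of relations $(R(B_\ell),S(B_\ell))$), and then invoke the fact that preservation is insensitive to such symmetries. Since $\dual f = \overline{f^\11}$ is a composite of the first two operations, it suffices to handle $f\mapsto f^\uu$ and $f\mapsto\overline f$; the equivalence of (i), (ii), (iii) then gives (iv) for free, and conversely (iv)$\Rightarrow$(i) follows because $\dual{\dual f}=f$.

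\textbf{Key steps.} First, for the shift $f\mapsto f^\uu$: I would show that if $\aa_1,\dots,\aa_{2\ell}$ are the columns fed to $f^\uu$ so that $M_j\in R(B_\ell)$, then applying the coordinatewise shift by $u_j$ to the $j$-th column produces columns $M_j'$ with $M_j'\in R(B_\ell)$ as well — because adding a constant $u_j\in\IB$ to every entry of a column either fixes it ($u_j=0$) or complements it ($u_j=1$), and $R(B_\ell)=\{w(x_1,\dots,x_\ell)=w(x_{\ell+1},\dots,x_{2\ell})\}$ is invariant under complementing all $2\ell$ entries of a tuple (complementing changes each of the two Hamming weights by the same amount $\ell$). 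Moreover $f^\uu(M_1,\dots,M_n)=f(M_1',\dots,M_n')$ by definition of $f^\uu$, and the map $M_j\mapsto M_j'$ is a bijection of $R(B_\ell)$ onto itself; hence $f^\uu$ violates $B_\ell$ on some tuples iff $f$ does. This proves (i)$\iff$(ii). Second, for negation $f\mapsto\overline f$: complementing the output of $f$ sends a ``bad'' output vector $\zz=(\00,\11)$ or $(\11,\00)$ (the two forbidden tuples of $S(B_\ell)$) to the other forbidden tuple, so $S(B_\ell)$ is invariant under coordinatewise complementation; since $\overline f(M_1,\dots,M_n)=\overline{f(M_1,\dots,M_n)}$ and $R(B_\ell)$ is unchanged, $\overline f\notin\Pol B_\ell$ iff $f\notin\Pol B_\ell$. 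This gives (i)$\iff$(iii). Third, combining: $\dual f=\overline{f^\11}$, so $\dual f\in\Pol B_\ell\iff f^\11\in\Pol B_\ell\iff f\in\Pol B_\ell$, giving (i)$\iff$(iv); and taking $\uu=\00$ in (ii) recovers (i), closing the cycle.

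\textbf{Main obstacle.} There is no real obstacle here — the lemma is essentially a bookkeeping exercise once one identifies the right symmetries. The only point requiring a little care is being precise about the bijection on $R(B_\ell)$ induced by a fixed shift vector $\uu$: one must check it is well-defined (maps $R(B_\ell)$ into itself), injective, and surjective, and that it interacts correctly with feeding columns to $f$ versus $f^\uu$. Concretely, if $M=(M_1,\dots,M_n)$ has all columns in $R(B_\ell)$, then so does $M'=(M_1^{(u_1)},\dots,M_n^{(u_n)})$ where $M_j^{(1)}$ denotes the complemented column, because complementation preserves $R(B_\ell)$; and $M''=M'$ recovered by shifting back by $\uu$ again, so the map is an involution up to the choice of $\uu$. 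The rest is immediate from the definitions and from the already-established correspondence (Lemma~\ref{lem:PolBl}) between non-preservation of $B_\ell$ and the existence of a witnessing equal-weight configuration of false and true points.
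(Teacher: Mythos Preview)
Your proposal is correct and follows essentially the same approach as the paper: both arguments hinge on the observations that $R(B_\ell)$ is closed under complementing tuples (giving (i)$\iff$(ii)), that $S(B_\ell)$ is closed under complementing tuples (giving (i)$\iff$(iii)), and that $\dual f=\overline{f^{\11}}$ then yields (i)$\iff$(iv). The only cosmetic difference is that you phrase things in terms of symmetries and bijections of $R(B_\ell)$, whereas the paper writes the argument directly; the reference to Lemma~\ref{lem:PolBl} in your final paragraph is unnecessary for this proof.
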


\begin{proof}
$\text{(i)} \iff \text{(ii)}$:
Let $\aa^1, \dots \aa^n \in R(B_\ell)$. Since $R(B_\ell)$ is invariant under taking negations of its members, we also have $\overline{\aa^1}, \dots \overline{\aa^n} \in R(B_\ell)$. Let $\uu \in \IB^n$, and let $\bb^i := \aa^i$ if $u_i = 0$ and $\bb^i := \overline{\aa^i}$ if $u_i = 1$, for $i \in \nset{n}$. If $f \in \Pol B_\ell$, then
\[
f^\uu(\aa^1, \dots, \aa^n) = f(\bb^1, \dots, \bb^n) \in S(B_\ell);
\]
hence $f^\uu \in \Pol B_\ell$. The converse implication holds, since $(f^\uu)^\uu = f$.

$\text{(i)} \iff \text{(iii)}$:
Assume that $f \in \Pol B_\ell$, and let $\aa^1, \dots \aa^n \in R(B_\ell)$. Then $f(\aa^1, \dots, \aa^n) \in S(B_\ell)$. Since $S(B_\ell)$ is invariant under taking negations of its members, we have
\[
\overline{f}(\aa^1, \dots, \aa^n) 
= \overline{f(\aa^1, \dots, \aa^n)}
\in S(B_\ell);
\]
hence $\overline{f} \in \Pol B_\ell$.
The converse implication holds, since $\overline{\overline{f}} = f$.

$\text{(i)} \iff \text{(iv)}$:
This follows immediately from the equivalence of (i), (ii) and (iii), because
$\dual{f} = \overline{f^\11}$.
\end{proof}

% \begin{proof}
% Assume that $f \in \Pol B_\ell$. We want to show that $\dual{f} \in \Pol B_\ell$. Let $\aa^1, \dots \aa^n \in R(B_\ell)$. Since $R(B_\ell)$ is invariant under taking negations of its members, we have that $\overline{\aa^1}, \dots \overline{\aa^n} \in R(B_\ell)$. By our assumption that $f \in \Pol B_\ell$, we have $f(\overline{\aa^1}, \dots, \overline{\aa^n}) \in S(B_\ell)$. Since also $R(B_\ell)$ is invariant under taking negations of its members, we have
% \[
% \dual{f}(\aa^1, \dots, \aa^n) =  
% \overline{f(\overline{\aa^1}, \dots, \overline{\aa^n})}
% \in S(B_\ell).
% \]
% Thus, $\dual{f} \in \Pol B_\ell$. The converse implication also holds, because $\dual{(\dual{f})} = f$.
% \end{proof}

\begin{corollary}
\label{cor:GMcWinfty}
For any Boolean function $f \colon \IB^n \to \IB$, $G_{M_cW_\infty}(f) \in M_cW_\infty$ and for all $\ell \geq 2$, $f \in \Pol B_\ell$ if and only if $G_{M_cW_\infty}(f) \in \Pol B_\ell$.
\end{corollary}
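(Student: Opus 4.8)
The plan is to derive this corollary directly from Corollary~\ref{cor:GMcUinfty} and Lemma~\ref{lem:dualBl}, by exploiting the single extra ingredient needed beyond what has already been proved: the fact that the dualization operation interchanges the clones $M_cU_\infty$ and $M_cW_\infty$. Recall that $G_{M_cW_\infty}(f) := \dual{(G_{M_cU_\infty}(f))}$, so the corollary is really a statement about how membership in a clone and preservation of $B_\ell$ behave under taking duals.

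For the clone-membership part, I would argue as follows. By Corollary~\ref{cor:GMcUinfty}, $G_{M_cU_\infty}(f) \in M_cU_\infty$. It is a standard fact about Post's lattice (and can be read off from Appendix~\ref{App:Post}) that $g \mapsto \dual{g}$ induces an automorphism of the lattice of clones which fixes the clone $M_c$ of constant-preserving monotone functions and interchanges $U_\infty$ and $W_\infty$; consequently it interchanges $M_cU_\infty$ and $M_cW_\infty$. Hence $G_{M_cW_\infty}(f) = \dual{(G_{M_cU_\infty}(f))} \in M_cW_\infty$. If one prefers a self-contained verification, one checks directly that $g$ is monotone iff $\dual{g}$ is monotone, that $g$ preserves $0$ (resp.\ $1$) iff $\dual{g}$ preserves $1$ (resp.\ $0$), so that $g \in M_c$ iff $\dual{g} \in M_c$, and that from the defining conditions of $U_\infty$ and $W_\infty$ one gets $g \in U_\infty$ iff $\dual{g} \in W_\infty$; together these give $\dual{(M_cU_\infty)} = M_cW_\infty$.

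For the preservation part, the argument is a concatenation of equivalences. By Corollary~\ref{cor:GMcUinfty}, $f \in \Pol B_\ell$ if and only if $G_{M_cU_\infty}(f) \in \Pol B_\ell$. Applying the equivalence of (i) and (iv) in Lemma~\ref{lem:dualBl} to the function $G_{M_cU_\infty}(f)$, the latter holds if and only if $\dual{(G_{M_cU_\infty}(f))} = G_{M_cW_\infty}(f) \in \Pol B_\ell$. Chaining these gives exactly the claimed equivalence for every $\ell \geq 2$. The only mildly delicate point in the whole argument is the identity $\dual{(M_cU_\infty)} = M_cW_\infty$; I do not expect it to be a genuine obstacle, as it is routine bookkeeping with Post's lattice and the elementary behaviour of duals, and everything else is an immediate consequence of results already established in this section.
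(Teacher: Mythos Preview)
Your proposal is correct and matches the paper's own proof essentially verbatim: the paper simply notes that $M_cW_\infty = \{\dual{g} : g \in M_cU_\infty\}$ and then invokes Lemma~\ref{lem:dualBl} and Corollary~\ref{cor:GMcUinfty}. Your write-up just unpacks this one-line argument in more detail.
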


\begin{proof}
Since $M_cW_\infty = \{\dual{f} : f \in M_cU_\infty\}$, the claim follows from Lemma~\ref{lem:dualBl} and Corollary~\ref{cor:GMcUinfty}.
\end{proof}

% Since $M_cW_\infty$ is dual to the clone $M_cU_\infty$, and $B_\ell$ is dual to itself, 
% we define $G_{M_cW_\infty}(f) := \dual{(G_{M_cU_\infty}(f))}$
% and obtain the following statement.

% \begin{corollary} \label{corollary:GMcWinftyisgood}
%   Let $f \colon \IB^n \to \IB$ with $f \in \Pol B_{l} \setminus \Pol B_{l+1}$ for some $l \geq 2$.
%   Then $G_{M_cW_\infty}(f) \in S$ and $G_{M_cW_\infty}(f) \in \Pol B_{l} \setminus \Pol B_{l+1}$.
% \end{corollary}

%%%%%%%%%%%%%%%%%%%%%%%%%%%%%%%%%%%%%%%%%%%%%%%%%%%%%%%%%%%%

\section{Simple games and magic squares revisited}
\label{sec:magic}

In their proof of the existence of $k$-asummable functions that are not $(k+1)$-asummable (see Theorem~\ref{theorem:strictAkInclusions}), Taylor and Zwicker constructed a certain family of functions \cite{Taylor}. We recall their construction here, and then we will refine Theorem~\ref{theorem:strictAkInclusions} and determine how the sets $\Pol B_n$ are related to each other. We will also show that Taylor and Zwicker's functions actually constitute an antichain of minimally non-threshold functions.

Fix an integer $k \geq 3$.
For $p, q \in \nset{k}$, define the $k \times k$ matrix $A^{p,q} = (a_{i,j})$ as follows:
\[
a_{i,j} =
\begin{cases}
k - 1, & \text{if $(i,j) = (p,q)$,} \\
1, & \text{if $i \neq p$ and $j \neq q$,} \\
0, & \text{otherwise.}
\end{cases}
\]
For example, if $k = 4$, then $A^{2,3} = \begin{pmatrix} 1 & 1 & 0 & 1 \\ 0 & 0 & 3 & 0 \\ 1 & 1 & 0 & 1 \\ 1 & 1 & 0 & 1 \end{pmatrix}$.
Let $B$ be the $k \times k$ matrix all of whose entries are equal to $k - 1$.

Let $S$ be a subset of $\nset{k} \times \nset{k}$.
We refer to $S$ as the $i$-th \emph{row} if $S = \{(i, j) : j \in \nset{k}\}$, and we refer to $S$ as the $j$-th \emph{column} if $S = \{(i, j) : i \in \nset{k}\}$.

\begin{lemma}
\label{lem:roworcolumn}
Let $S \subseteq \nset{k} \times \nset{k}$. Then $\sum_{(p,q) \in S} A^{p,q} = B$ if and only if $S$ is a row or a column.
\end{lemma}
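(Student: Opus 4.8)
The plan is to prove the two implications separately: the ``if'' direction by a short direct computation, and the ``only if'' direction via an explicit formula for the entries of $\sum_{(p,q)\in S}A^{p,q}$. For the ``if'' direction, I would suppose first that $S$ is the $p_0$-th row, $S=\{(p_0,q):q\in\nset{k}\}$, and examine the $(i,j)$-entry of $\sum_{q\in\nset{k}}A^{p_0,q}$. If $i=p_0$, only the summand $q=j$ contributes, namely a single $k-1$; if $i\neq p_0$, exactly the $k-1$ summands with $q\neq j$ contribute, each a $1$. In both cases the entry equals $k-1$, so the sum is $B$. Since $(A^{p,q})\T=A^{q,p}$, transposing this computation handles the case in which $S$ is a column.

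For the ``only if'' direction, assume $\sum_{(p,q)\in S}A^{p,q}=B$. I would write $s=|S|$, let $r_i$ be the number of elements of $S$ in the $i$-th row and $c_j$ the number in the $j$-th column, so $\sum_i r_i=\sum_j c_j=s$. Fixing $i,j$ and classifying the pairs $(p,q)\in S$ by whether $p=i$ and whether $q=j$: the pair $(i,j)$ itself (if it lies in $S$) contributes $k-1$; each pair with $p\neq i$ and $q\neq j$ contributes $1$, and by inclusion–exclusion there are $s-r_i-c_j$ such pairs, plus one more when $(i,j)\in S$; all remaining pairs contribute $0$. Hence
\[
\Bigl(\sum_{(p,q)\in S}A^{p,q}\Bigr)_{ij}=\begin{cases}s-r_i-c_j+k,&\text{if }(i,j)\in S,\\s-r_i-c_j,&\text{if }(i,j)\notin S.\end{cases}
\]
Setting this equal to $k-1$ for all $i,j$ and adding the $k^2$ resulting equations, using $\sum_i r_i=\sum_j c_j=s$, gives $k^2s-ks=k^2(k-1)$, so $s=k$ since $k\geq 3$. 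Substituting $s=k$ back, the condition becomes: $(i,j)\in S$ iff $r_i+c_j=k+1$, and $(i,j)\notin S$ iff $r_i+c_j=1$.

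From here I would finish with a short case analysis. If some row $i_0$ has $r_{i_0}\geq 2$, choose distinct columns $j\neq j'$ with $(i_0,j),(i_0,j')\in S$; then $c_j=c_{j'}=k+1-r_{i_0}\leq k-1$, so column $j$ misses some row $i_1$, and $(i_1,j)\notin S$ forces $r_{i_1}+c_j=1$, hence (as $c_j\geq 1$) $c_j=1$ and $r_{i_0}=k$. Thus the $i_0$-th row lies entirely in $S$, and since $|S|=k$ this row is all of $S$. By the symmetric argument, if some column $j_0$ has $c_{j_0}\geq 2$ then $S$ is the $j_0$-th column. If neither occurs, then $r_i\leq 1$ and $c_j\leq 1$ for all $i,j$; since $\sum_i r_i=\sum_j c_j=k$, this forces $r_i=c_j=1$ throughout, whence any $(i,j)\in S$ (one exists, as $|S|=k\geq 1$) would satisfy $r_i+c_j=2\neq k+1$ — a contradiction. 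Therefore $S$ is a row or a column.

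The step I expect to need the most care is the entry formula: the four types of pairs $(p,q)$ (according to whether $p=i$ and whether $q=j$) must be counted exactly right, in particular the inclusion–exclusion count $s-r_i-c_j$ of the ``generic'' pairs contributing $1$. Once that is in place, the global summation that pins down $|S|=k$ and the ensuing case split are routine, and I foresee no further obstacle.
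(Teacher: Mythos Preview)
Your argument is correct. The entry formula is right (the inclusion--exclusion count of pairs with $p\neq i$ and $q\neq j$ is indeed $s-r_i-c_j+[(i,j)\in S]$), the global summation correctly yields $|S|=k$, and the closing case analysis is sound; in particular the last case uses $k+1\neq 2$, which holds since $k\geq 3$.

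The paper, however, takes a much shorter local route for the ``only if'' direction. It simply observes that if $(p,q),(p',q')\in S$ with $p\neq p'$ and $q\neq q'$, then the $(p,q)$-entry of the sum is at least $A^{p,q}_{p,q}+A^{p',q'}_{p,q}=(k-1)+1=k>k-1$, contradicting the hypothesis; hence any two elements of $S$ share a row or a column, and from there one quickly forces $S$ to be an entire row or column. Your approach trades this one-line overshoot observation for an explicit entry formula and a global count, which is more laborious but has the virtue of being completely mechanical and of producing $|S|=k$ as an explicit intermediate fact; the paper's argument is slicker but leaves a bit more to the reader in the final ``it is easy to see'' step.
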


\begin{proof}
It is clear that if $S$ is a row or a column, then $\sum_{(p,q) \in S} A^{p,q} = B$.

Assume then that $\sum_{(p,q) \in S} A^{p,q} = B$. Clearly $S$ is nonempty, so choose an element $(p,q)$ of $S$; clearly $S$ contains another element $(p',q')$. If $p \neq p'$ and $q \neq q'$, then the entry on row $p$ column $q$ in the sum $\sum_{(p,q) \in S} A^{p,q}$ is at least $k$; hence the sum cannot be equal to $B$. Thus either $p = p'$ or $q = q'$. It is easy to see that in the former case, all remaining entries of $S$ must be on the $p$-th row, and all elements of the $p$-th row must be in $S$; in the latter case, all remaining entries of $S$ must be on the $q$-th column, and all elements of the $q$-th column must be in $S$. We conclude that $S$ is either a row or a column.
\end{proof}

We define a function $\phi \colon \nset{R}^{k \times k} \to \IN$ that maps each $k \times k$ matrix with entries in $\nset{R}$ to an integer, where $R$ is a sufficiently large integer that will be specified below.
The function $\phi$ is defined as follows: for a matrix $M$, read the entries of $M$ from left to right and from top to bottom; the resulting string is the representation of $\phi(M)$ in base $R$.
For $p, q \in \nset{k}$, denote $w^{p,q} := \phi(A^{p,q})$ and $t := \phi(B)$.
For example if $k = 4$, then $w^{2,3} = 1101003011011101_R$ and $t = 3333333333333333_R$.
We must choose $R$ in such a way that when we add these numbers to form the sum $\sum_{(p,q) \in S} w^{p,q}$ for any $S \subseteq \nset{k} \times \nset{k}$, no carry will occur. Thus, the number $(k-1)^2 + (k-1) + 1 = k^2 - k + 1$, or anything larger, would be fine.

It is easy to see that the function $\phi$ has the following preservation property:
for any $S \subseteq \nset{k} \times \nset{k}$,
$\phi(\sum_{(p,q) \in S} A^{p,q}) = \sum_{(p,q) \in S} \phi(A^{p,q})$.
It thus follows from Lemma~\ref{lem:roworcolumn} that for all $S \subseteq \nset{k} \times \nset{k}$, it holds that $\sum_{(p,q) \in S} w^{p,q} = t$ if and only if $S$ is a row or a column.

Fix a bijection $\beta \colon \nset{k} \times \nset{k} \to \nset{k^2}$.
The \emph{characteristic tuple} of a subset $S$ of $\nset{k} \times \nset{k}$ is the tuple $\ee_S \in \IB^{k^2}$, whose $i$-th entry is $1$ if $i = \beta(p,q)$ for some $(p,q) \in S$ and $0$ otherwise. With no risk of confusion, we will refer to the characteristic tuples of rows and columns also as \emph{rows} and \emph{columns,} respectively.

Let $\ww = (w^{\beta^{-1}(1)}, \dots, w^{\beta^{-1}(k^2)})$.

For any $n$-tuples $\aa, \bb \in \IR^n$, the \emph{dot product} is defined as
\[
\aa \cdot \bb = \sum_{i=1}^n a_i b_i.
\]

Taylor and Zwicker's function $f_k \colon \IB^{k^2} \to \IB$ is defined by the following rule:
$f_k(\xx) = 1$ if and only if $\xx \cdot \ww > t$ or $\xx$ is a row.
% $f_k(\xx) = 1$ if and only if $\sum_{(p,q) \in \nset{k} \times \nset{k}} x_{\beta(p,q)} w^{p,q} > t$ or $\xx = \ee_S$ for some row $S$.

Note that for all $\xx \in \IB^{k^2}$, $\xx \cdot \ww = t$ if and only if $\xx$ is a row or a column.

\begin{lemma}
\label{lem:fkBl}
Let $k \geq 3$ and $\ell \geq 2$. Then $f_k$ preserves $B_\ell$ if and only if $k$ is not a divisor of $\ell$.
\end{lemma}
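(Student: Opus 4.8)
The plan is to invoke Lemma~\ref{lem:PolBl}, which turns ``$f_k \in \Pol B_\ell$'' into the statement that there are no $\aa_1, \dots, \aa_\ell \in \False{f_k}$ and $\bb_1, \dots, \bb_\ell \in \True{f_k}$ with $\aa_1 + \dots + \aa_\ell = \bb_1 + \dots + \bb_\ell$ (vector addition in $\IR^{k^2}$). I would use repeatedly the two facts recorded just before the statement: a tuple $\xx$ satisfies $\xx \cdot \ww = t$ exactly when $\xx$ is a row or a column; the rows are true points of $f_k$ and the columns are false points. Consequently every true point $\bb$ has $\bb \cdot \ww \geq t$, with equality precisely when $\bb$ is a row, and every false point $\aa$ has $\aa \cdot \ww \leq t$ and is never a row.

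For the direction ``$k \mid \ell$ implies $f_k \notin \Pol B_\ell$'' I would exhibit an explicit forbidden configuration. Write $\ell = km$. Since each cell $(i,j)$ lies in exactly one row and in exactly one column, the sum of the characteristic tuples of the $k$ rows equals the all-ones tuple, and so does the sum of the characteristic tuples of the $k$ columns. Taking $\bb_1, \dots, \bb_\ell$ to be the $k$ rows, each repeated $m$ times (true points), and $\aa_1, \dots, \aa_\ell$ to be the $k$ columns, each repeated $m$ times (false points), both sums equal $m \cdot (1, \dots, 1)$; by Lemma~\ref{lem:PolBl} this shows $f_k \notin \Pol B_\ell$.

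For the converse, suppose $f_k \notin \Pol B_\ell$, fix such $\aa_i, \bb_i$, and let $\mathbf{s} := \aa_1 + \dots + \aa_\ell = \bb_1 + \dots + \bb_\ell$. Dotting with $\ww$ and using linearity, $\sum_i (\aa_i \cdot \ww) = \mathbf{s} \cdot \ww = \sum_i (\bb_i \cdot \ww)$; the left-hand sum is at most $\ell t$ and the right-hand sum is at least $\ell t$, so both equal $\ell t$, and termwise $\aa_i \cdot \ww = \bb_i \cdot \ww = t$ for every $i$. Hence each $\aa_i$ is a row or a column, and being a false point it must be a column; dually each $\bb_i$ is a row. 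Now I would read off the entries of $\mathbf{s}$: writing $n_p$ for the number of indices $i$ with $\bb_i$ equal to the $p$-th row, the entry of $\mathbf{s}$ at cell $(p,q)$ equals $n_p$, independently of $q$; computing the same entry from the $\aa_i$ shows it equals the number of $i$ with $\aa_i$ equal to the $q$-th column, hence depends only on $q$. So $\mathbf{s}$ is constant, say equal to $c$ at every cell, whence $n_p = c$ for all $p$ and $\ell = \sum_{p=1}^k n_p = kc$, giving $k \mid \ell$ --- a contradiction. Thus $f_k \in \Pol B_\ell$ whenever $k \nmid \ell$.

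The routine parts (the two summation identities for rows and columns, and linearity of the dot product) are immediate. The main obstacle, such as it is, lies in the final step of the converse: one must combine the observations that $\mathbf{s}$ is simultaneously constant along rows and constant along columns to force $\ell = kc$, and handle the small case distinction that a true point (resp.\ false point) lying on the hyperplane $\xx \cdot \ww = t$ is necessarily a row (resp.\ a column) rather than the other type.
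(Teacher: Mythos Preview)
Your proof is correct and follows essentially the same approach as the paper: both directions use Lemma~\ref{lem:PolBl}, the explicit witness of $m$ copies of each row versus $m$ copies of each column when $k\mid\ell$, and the dot-product squeeze $\sum_i \aa_i\cdot\ww \le \ell t \le \sum_i \bb_i\cdot\ww$ to force every $\aa_i$ to be a column and every $\bb_i$ to be a row. Your endgame---reading the $(p,q)$-entry of $\mathbf{s}$ from both sides to see it is simultaneously a function of $p$ alone and of $q$ alone, hence constant, giving $\ell = kc$---is a slightly cleaner packaging of the same contradiction the paper derives by observing that the row-sum matrix is constant along rows while the column-sum matrix has two unequal columns.
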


\begin{proof}
If $\ell = mk$ for some integer $m$,
then let $\aa^1, \dots, \aa^\ell$ comprise $m$ occurrences of each column,
and let $\bb^1, \dots, \bb^\ell$ comprise $m$ occurrences of each row.
Then, the $\aa^i$ are false points of $f_k$ and the $\bb^i$ are true points, and $\aa^1 + \dots + \aa^\ell = (m, \dots, m) = \bb^1 + \dots + \bb^\ell$. Thus $f_k$ is not $\ell$-asummable. Lemma~\ref{lem:PolBl} implies that $f_k$ does not preserve $B_\ell$.

Assume then that $k$ is not a divisor of $\ell$.
Suppose, on the contrary, that $f_k$ does not preserve $B_\ell$.
By Lemma~\ref{lem:PolBl}, there exist $\aa^1, \dots, \aa^\ell \in f^{-1}(0)$ and $\bb^1, \dots, \bb^\ell \in f^{-1}(1)$ such that $\aa^1 + \dots + \aa^\ell = \bb^1 + \dots + \bb^\ell$.
Since $\xx \cdot \ww \leq t$ for any false point $\xx$ of $f_k$, and $\xx \cdot \ww \geq t$ for any true point $\xx$, we have
\[
\sum_{i = 1}^\ell \aa^i \cdot \ww \leq \ell t
\qquad
\text{and}
\qquad
\sum_{i = 1}^\ell \bb^i \cdot \ww \geq \ell t.
\]
On the other hand, since $\aa^1 + \dots + \aa^\ell = \bb^1 + \dots + \bb^\ell$, we have
\[
\sum_{i = 1}^\ell \aa^i \cdot \ww
= (\aa^1 + \dots + \aa^\ell) \cdot \ww
= (\bb^1 + \dots + \bb^\ell) \cdot \ww
= \sum_{i = 1}^\ell \bb^i \cdot \ww.
\]
Consequently, $\aa^i \cdot \ww = t$ and $\bb^i \cdot \ww = t$ for all $i \in \nset{\ell}$, and we conclude that each $\aa^i$ is a column and each $\bb^i$ is a row.
% Since $\sum_{(p,q) \in \nset{k} \times \nset{k}} x_{\beta(p,q)} w^{p,q} \leq t$ for any false point $\xx$ of $f_k$, and $\sum_{(p,q) \in \nset{k} \times \nset{k}} x_{\beta(p,q)} w^{p,q} \geq t$ for any true point $\xx$, we have
% \begin{gather*}
% \sum_{i = 1}^\ell \Bigl( \sum_{(p,q) \in \nset{k} \times \nset{k}} a^i_{\beta(p,q)} w^{p,q} \Bigr) \leq \ell t, \\
% \sum_{i = 1}^\ell \Bigl( \sum_{(p,q) \in \nset{k} \times \nset{k}} b^i_{\beta(p,q)} w^{p,q} \Bigr) \geq \ell t.
% \end{gather*}
% On the other hand, since $\aa^1 + \dots + \aa^\ell = \bb^1 + \dots + \bb^\ell$, we have
% \begin{multline*}
% \sum_{i = 1}^\ell \Bigl( \sum_{(p,q) \in \nset{k} \times \nset{k}} a^i_{\beta(p,q)} w^{p,q} \Bigr) =
% \sum_{(p,q) \in \nset{k} \times \nset{k}} \Bigl( \sum_{i = 1}^\ell a^i_{\beta(p,q)} \Bigr) w^{p,q} = \\
% \sum_{(p,q) \in \nset{k} \times \nset{k}} \Bigl( \sum_{i = 1}^\ell b^i_{\beta(p,q)} \Bigr) w^{p,q} =
% \sum_{i = 1}^\ell \Bigl( \sum_{(p,q) \in \nset{k} \times \nset{k}} b^i_{\beta(p,q)} w^{p,q} \Bigr).
% \end{multline*}
% Consequently, $\sum_{(p,q) \in \nset{k} \times \nset{k}} a^i_{\beta(p,q)} w^{p,q} = t$ and $\sum_{(p,q) \in \nset{k} \times \nset{k}} b^i_{\beta(p,q)} w^{p,q} = t$ for all $i \in \nset{\ell}$, and we conclude that each $\aa^i$ is a column and each $\bb^i$ is a row.
Since $k$ is not a divisor of $\ell$, there necessarily exist two columns that have a different number of occurrences among $\aa^1, \dots, \aa^\ell$. Then $\phi^{-1}(\aa^1 + \dots + \aa^n)$ is a matrix that is constant along each column, but there are two columns with distinct values.
This contradicts the fact that the matrix $\phi^{-1}(\bb^1 + \dots + \bb^n)$ is constant along each row.
This completes the proof, and we conclude that $f_k$ preserves $B_\ell$.
\end{proof}

\begin{lemma}
\label{lem:mod2sum}
The modulo-$2$ addition operation ${\oplus}$ preserves $B_\ell$ if and only if $\ell$ is odd.
\end{lemma}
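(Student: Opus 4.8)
The plan is to apply Lemma~\ref{lem:PolBl} with $f = {\oplus}$, which turns the question into an elementary counting problem: does there exist an $\ell$-element list of false points and an $\ell$-element list of true points of $\oplus$ with the same vector sum in $\IR^2$? First I would record the obvious data: since $\oplus$ is binary, all tuples in sight lie in $\IB^2$, and $\False{\oplus} = \{(0,0),(1,1)\}$ while $\True{\oplus} = \{(0,1),(1,0)\}$. The sums occurring in Lemma~\ref{lem:PolBl} are ordinary vector sums in $\IR^2$ (here in fact $\IN^2$), so there is no interaction with the mod-$2$ structure beyond fixing which pairs are true and which are false.

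Next I would parametrise the possible sums. If $\aa_1, \dots, \aa_\ell \in \False{\oplus}$ contains exactly $s$ copies of $(1,1)$ (hence $\ell - s$ copies of $(0,0)$) for some $s$ with $0 \leq s \leq \ell$, then $\aa_1 + \dots + \aa_\ell = (s,s)$, and conversely every pair $(s,s)$ with $0 \leq s \leq \ell$ arises in this way. Likewise, if $\bb_1, \dots, \bb_\ell \in \True{\oplus}$ contains exactly $t$ copies of $(0,1)$, then $\bb_1 + \dots + \bb_\ell = (\ell - t, t)$, and every such tuple with $0 \leq t \leq \ell$ is realised. Setting $(s,s) = (\ell - t, t)$ forces $s = t$ and $2s = \ell$.

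The conclusion is then immediate. If $\ell$ is even, one may take $s = t = \ell/2 \in \{0,\dots,\ell\}$; this exhibits false points $\aa_1, \dots, \aa_\ell$ and true points $\bb_1, \dots, \bb_\ell$ of $\oplus$ with $\aa_1 + \dots + \aa_\ell = (\ell/2, \ell/2) = \bb_1 + \dots + \bb_\ell$, so by Lemma~\ref{lem:PolBl}, $\oplus$ does not preserve $B_\ell$. If $\ell$ is odd, the equation $2s = \ell$ has no solution, so no such coincidence of sums is possible, and Lemma~\ref{lem:PolBl} gives $\oplus \in \Pol B_\ell$. I do not anticipate any genuine obstacle; the only points requiring a little care are correctly invoking Lemma~\ref{lem:PolBl} and keeping the $\IB$-valued entries of the tuples separate from their $\IN$-valued coordinatewise sums.
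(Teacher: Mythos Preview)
Your proposal is correct and follows essentially the same approach as the paper: both identify $\False{\oplus} = \{(0,0),(1,1)\}$ and $\True{\oplus} = \{(0,1),(1,0)\}$, parametrise the possible $\ell$-fold sums as $(s,s)$ and $(\ell-t,t)$ respectively, and reduce the question via Lemma~\ref{lem:PolBl} to whether $2s = \ell$ has an integer solution. The only cosmetic difference is that the paper uses a single letter $m$ where you use $s$ and $t$.
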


\begin{proof}
The false points of $\oplus$ are $(0,0)$ and $(1,1)$, while the true points are $(0,1)$ and $(1,0)$.
Hence the sum of any $\ell$ false points is of the form $(m,m)$ for some $m$ with $0 \leq m \leq \ell$.
The sum of any $\ell$ true points is of the form $(m, \ell - m)$ for some $m$ with $0 \leq m \leq \ell$.

If $\ell$ is odd, then $m \neq \ell - m$ for any $m$. It follows that $\aa^1 + \dots + \aa^\ell \neq \bb^1 + \dots + \bb^\ell$ for any false points $\aa^1, \dots, \aa^\ell$ and any true points $\bb^1, \dots, \bb^\ell$. By Lemma~\ref{lem:PolBl}, $\oplus$ preserves $B_\ell$.

If $\ell$ is even, say $\ell = 2k$, then
\begin{multline*}
\underbrace{(0,0) + \dots + (0,0)}_k + \underbrace{(1,1) + \dots + (1,1)}_k = \\
\underbrace{(0,1) + \dots + (0,1)}_k + \underbrace{(1,0) + \dots + (1,0)}_k.
\end{multline*}
By Lemma~\ref{lem:PolBl}, $\oplus$ does not preserve $B_\ell$.
\end{proof}

\begin{proposition}
Let $\ell, m \geq 2$. 
Then $\Pol B_\ell \subseteq \Pol B_m$ if and only if $m$ divides $\ell$.
\end{proposition}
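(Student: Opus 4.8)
The plan is to handle the two implications separately, reducing each to the asummability reformulation of preservation given in Lemma~\ref{lem:PolBl} together with the preservation behaviour of the functions $f_k$ and $\oplus$ recorded in Lemmas~\ref{lem:fkBl} and~\ref{lem:mod2sum}.

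First, for the implication $m \mid \ell \implies \Pol B_\ell \subseteq \Pol B_m$, I would argue contrapositively at the level of a single function. Suppose $f \notin \Pol B_m$. By Lemma~\ref{lem:PolBl} there are $\aa_1, \dots, \aa_m \in f^{-1}(0)$ and $\bb_1, \dots, \bb_m \in f^{-1}(1)$ with $\aa_1 + \dots + \aa_m = \bb_1 + \dots + \bb_m$. Writing $\ell = km$, I take $k$ copies of each $\aa_i$ and $k$ copies of each $\bb_i$; this gives $\ell$ false points and $\ell$ true points whose sums both equal $k(\aa_1 + \dots + \aa_m)$, so Lemma~\ref{lem:PolBl} yields $f \notin \Pol B_\ell$. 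Hence $\Pol B_\ell \subseteq \Pol B_m$. This step is routine.

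Second, for the converse I again argue contrapositively: assuming $m \nmid \ell$, I exhibit a Boolean function in $\Pol B_\ell \setminus \Pol B_m$, which shows $\Pol B_\ell \not\subseteq \Pol B_m$. I split into two cases. If $m \geq 3$, then $m$ divides $m$ but not $\ell$, so Lemma~\ref{lem:fkBl} gives $f_m \in \Pol B_\ell$ and $f_m \notin \Pol B_m$; note that $f_m$ is available precisely because $m \geq 3$. If $m = 2$, then $m \nmid \ell$ forces $\ell$ to be odd, and Lemma~\ref{lem:mod2sum} gives $\oplus \in \Pol B_\ell$ and $\oplus \notin \Pol B_2$. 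In either case we obtain the required witness, completing the contrapositive.

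I do not anticipate a genuine obstacle: both directions collapse to lemmas already proved. The only point needing attention is that the case analysis $m = 2$ versus $m \geq 3$ is exhaustive and that $f_m$ is invoked only for $m \geq 3$, which is exactly the hypothesis of Lemma~\ref{lem:fkBl}; the small case $m = 2$ has to be treated separately via $\oplus$ since $f_2$ is not defined.
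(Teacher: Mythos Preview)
Your proof is correct and follows essentially the same approach as the paper. The only cosmetic difference is in the direction $m \mid \ell \Rightarrow \Pol B_\ell \subseteq \Pol B_m$: you argue contrapositively via the asummability reformulation (repeating each false/true point $k$ times), whereas the paper works directly with the relational constraints (repeating each coordinate of the $R(B_m)$-tuples $k$ times to obtain $R(B_\ell)$-tuples); these are two views of the same matrix manipulation, and the converse direction via $f_m$ and $\oplus$ is identical to the paper's.
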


\begin{proof}
Assume first that $m$ does not divide $\ell$.
If $m \neq 2$, then by Lemma~\ref{lem:fkBl}, $f_m \in \Pol B_\ell$ but $f_m \notin \Pol B_m$.
If $m = 2$, then by Lemma~\ref{lem:mod2sum}, ${\oplus} \in \Pol B_\ell$ but ${\oplus} \notin \Pol B_m$.
In either case, we conclude that $\Pol B_\ell \not\subseteq \Pol B_m$.

Assume then that $\ell = km$ for some integer $k$.
Let $f \in \Pol B_\ell$.
Let $\aa^1, \dots, \aa^n \in R(B_m)$.
For each $i \in \{1, \dots, n\}$, define the tuple $\bb^i \in \IB^\ell$ as
\[
\bb^i = (\underbrace{a^i_1, \dots, a^i_1}_k, \dots, \underbrace{a^i_m, \dots, a^i_m}_k, \underbrace{a^i_{m + 1}, \dots, a^i_{m + 1}}_k, \dots, \underbrace{a^i_{2m}, \dots, a^i_{2m}}_{k}).
\]
It is clear that $\bb^i \in R(B_\ell)$.
Let $\zz := f(\bb^1, \dots, \bb^n)$, that is,
\[
\zz = (\underbrace{f(a^1_1, \dots, a^n_1), \dots, f(a^1_1, \dots, a^n_1)}_k, \dots, \underbrace{f(a^1_{2m}, \dots, a^n_{2m}), \dots, f(a^1_{2m}, \dots, a^n_{2m})}_k).
\]
Since $f \in \Pol B_\ell$, we have $\zz \in S(R_\ell)$. Then
\[
\zz \in \IB^\ell \setminus \{(\underbrace{0, \dots, 0}_\ell, \underbrace{1, \dots, 1}_\ell), (\underbrace{1, \dots, 1}_\ell, \underbrace{0, \dots, 0}_\ell))\}.
\]
This implies that
\[
f(\aa^1, \dots, \aa^n) \in \IB^m \setminus \{(\underbrace{0, \dots, 0}_m, \underbrace{1, \dots, 1}_m), (\underbrace{1, \dots, 1}_m, \underbrace{0, \dots, 0}_m))\}.
\]
Thus $f \in \Pol B_m$, and we conclude that $\Pol B_\ell \subseteq \Pol B_m$.
\end{proof}

\begin{proposition}
\label{prop:tzantichain}
The functions $f_k$ ($k \geq 3$) are pairwise incomparable by the minor relation.
% $f_m \leq f_n$ if and only if $m = n$.
\end{proposition}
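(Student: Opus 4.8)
The plan is to show that for distinct $k, k' \geq 3$, neither $f_k \leq f_{k'}$ nor $f_{k'} \leq f_k$ holds; by symmetry it suffices to assume $k < k'$ and derive a contradiction from $f_k \leq f_{k'}$ and (separately) from $f_{k'} \leq f_k$. The key tool is Lemma~\ref{lem:fkBl}: $f_k$ preserves $B_\ell$ exactly when $k \nmid \ell$, together with the fact that each $\Pol B_\ell$ is closed under taking minors (being a $\Pol$-class). The strategy is to exhibit, for each ordered pair $(k,k')$, an integer $\ell$ on which the two functions differ in their preservation of $B_\ell$: if $f \leq g$ then every $B_\ell$ preserved by $g$ is also preserved by $f$, so it is enough to find $\ell$ with $g \in \Pol B_\ell$ but $f \notin \Pol B_\ell$.

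Concretely, suppose $f_k \leq f_{k'}$ with $k < k'$. Take $\ell = k$: then $k \mid \ell$, so by Lemma~\ref{lem:fkBl}, $f_k \notin \Pol B_k$. On the other hand $k' \nmid k$ (since $0 < k < k'$), so $f_{k'} \in \Pol B_k$. Since $\Pol B_k$ is closed under minors and $f_k \leq f_{k'}$, we would get $f_k \in \Pol B_k$, a contradiction. For the reverse direction, suppose $f_{k'} \leq f_k$ with $k < k'$. Now take $\ell = k'$: then $k' \mid \ell$ gives $f_{k'} \notin \Pol B_{k'}$, whereas $k \nmid k'$ — and here one must be slightly careful, because $k \nmid k'$ is exactly what is needed and does hold for $3 \le k < k'$ unless $k'$ happens to be a multiple of $k$. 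This is the one gap: if $k \mid k'$, the argument with $\ell = k'$ fails.

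Hence the main obstacle is the case $k \mid k'$ (e.g. $k = 3$, $k' = 6$). To handle it, instead of $\ell = k'$ choose $\ell$ to be a prime not dividing $k$ but — wait, that would make $f_{k'}$ preserve $B_\ell$ too. Rather, the right choice is $\ell = k'$ handled via a finer invariant, or better: observe that for $f_{k'} \leq f_k$ we need $\Pol B_\ell \ni f_k \Rightarrow f_{k'} \in \Pol B_\ell$ for every $\ell$, i.e. $\{\ell : k \nmid \ell\} \subseteq \{\ell : k' \nmid \ell\}$, equivalently every multiple of $k'$ is a multiple of $k$, which forces $k \mid k'$; and symmetrically $f_k \leq f_{k'}$ forces $k' \mid k$. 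Since both cannot hold for distinct $k, k' \geq 3$ but could each hold individually, the preservation data alone rules out comparability in \emph{one} direction for each pair but not always both. Therefore the genuinely needed extra input, in the case $k \mid k'$ (so $f_{k'} \leq f_k$ is not yet excluded), is either a direct arity/essential-variable count — $f_{k'}$ depends on all $k'^2$ variables while $f_k$ has arity $k^2 < k'^2$, so $f_{k'} \not\leq f_k$ immediately since a minor cannot have more essential variables than the function it is a minor of — or an appeal to the forthcoming result that these $f_k$ are minimally non-threshold. I expect the clean finish to be: first dispose of $f_{k'} \leq f_k$ for $k < k'$ by the arity bound (a minor of an $n$-ary function is at most $n$-ary up to equivalence, and $f_{k'}$ genuinely depends on $k'^2 > k^2$ variables), and then dispose of $f_k \leq f_{k'}$ by the $B_k$-preservation argument above. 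The arity/essential-variable step is the part to check carefully, namely that $f_{k'}$ indeed depends on every one of its $k'^2$ variables.
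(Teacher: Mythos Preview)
Your proposal is correct and, after the exploratory detour, lands on exactly the paper's argument: for $k < k'$, rule out $f_k \leq f_{k'}$ via $B_k$-preservation (since $k' \nmid k$ but $k \mid k$), and rule out $f_{k'} \leq f_k$ by the essential-variable count ($f_{k'}$ depends on all $k'^2 > k^2$ variables). The paper likewise asserts without proof that all variables of each $f_k$ are essential, so your flagged ``part to check carefully'' is precisely the one detail left implicit there as well.
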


\begin{proof}
% Every function is a minor of itself, so we may assume that $m \neq n$.
Let $m \neq n$, and consider the comparability of $f_m$ and $f_n$.
Since all variables are essential in $f_m$ and in $f_n$, and the number of essential variables cannot increase when taking minors, we have that $f_m \not\leq f_n$ whenever $m > n$.
If $m < n$, then $n$ is not a divisor of $m$ but $n$ is a divisor of itself. By Lemma~\ref{lem:fkBl}, $f_n$ preserves $B_m$ and $f_m$ does not preserve $B_m$. Since every minor of $f_n$ preserves all relational constraints $f_n$ does, we must have that $f_m \not\leq f_n$ also in this case.
\end{proof}

\begin{proposition}
\label{prop:tzmonotone}
For every $k \geq 3$, the function $f_k$ is monotone.
\end{proposition}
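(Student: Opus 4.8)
The plan is to verify the defining inequality of monotonicity directly from the definition of $f_k$, exploiting the fact that every entry of the weight vector $\ww$ is a strictly positive integer. First I would record this positivity: for $k \geq 3$ each matrix $A^{p,q}$ has an entry equal to $k-1 > 0$, so $w^{p,q} = \phi(A^{p,q}) > 0$ for every $(p,q) \in \nset{k} \times \nset{k}$. Consequently the map $\xx \mapsto \xx \cdot \ww$ is \emph{strictly} monotone on $\IB^{k^2}$: if $\xx \leq \yy$ then $\xx \cdot \ww \leq \yy \cdot \ww$, and if moreover $\xx \neq \yy$ then $\xx \cdot \ww < \yy \cdot \ww$, since $\yy$ has a $1$ in some coordinate where $\xx$ has a $0$, and that coordinate contributes a positive weight to the difference.

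Next I would take $\xx, \yy \in \IB^{k^2}$ with $\xx \leq \yy$, assume $f_k(\xx) = 1$, and show $f_k(\yy) = 1$. By the definition of $f_k$, either $\xx \cdot \ww > t$ or $\xx$ is a row. In the first case, monotonicity of the dot product gives $\yy \cdot \ww \geq \xx \cdot \ww > t$, hence $f_k(\yy) = 1$. In the second case, $\xx$ being a row gives $\xx \cdot \ww = t$, by the observation preceding Lemma~\ref{lem:fkBl} that $\xx \cdot \ww = t$ exactly when $\xx$ is a row or a column. If $\yy = \xx$, then $\yy$ is itself a row, so $f_k(\yy) = 1$; if $\yy \neq \xx$, then strictness gives $\yy \cdot \ww > \xx \cdot \ww = t$, so again $f_k(\yy) = 1$. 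This exhausts all cases, so $f_k$ is monotone.

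The argument is short and there is essentially no obstacle; the only point that requires care is the boundary behaviour of the threshold, namely that a row $\xx$ satisfies $\xx \cdot \ww = t$ rather than $\xx \cdot \ww > t$ — which is precisely why the definition of $f_k$ includes the disjunct ``$\xx$ is a row''. Strict positivity of the weights is exactly what forces the dot product to increase whenever a coordinate is flipped from $0$ to $1$, and this is what lets the boundary case be handled cleanly by splitting on whether $\yy = \xx$.
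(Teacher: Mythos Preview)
Your proof is correct and follows essentially the same approach as the paper: both arguments hinge on the strict positivity of every weight $w^{p,q}$, hence the strict monotonicity of $\xx \mapsto \xx \cdot \ww$ on $\IB^{k^2}$. The only cosmetic difference is the case split: the paper observes that $\xx < \yy$ forces either $\xx \cdot \ww < t$ (so $f_k(\xx) = 0$) or $\yy \cdot \ww > t$ (so $f_k(\yy) = 1$), whereas you assume $f_k(\xx) = 1$ and split on the two disjuncts in its definition.
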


\begin{proof}
Let $\xx, \yy \in \IB^{k^2}$. If $\xx < \yy$, then, since each $w^{p,q}$ is positive,
$\xx \cdot \ww < \yy \cdot \ww$.
% \[
% \sum_{(p,q) \in \nset{k} \times \nset{k}} x_{\beta(p,q)} w^{p,q} < \sum_{(p,q) \in \nset{k} \times \nset{k}} y_{\beta(p,q)} w^{p,q}.
% \]
Therefore one of the following conditions holds:
$\xx \cdot \ww < t$ or $\yy \cdot \ww > t$.
% $\sum_{(p,q) \in \nset{k} \times \nset{k}} x_{\beta(p,q)} w^{p,q} < t$
% or
% $\sum_{(p,q) \in \nset{k} \times \nset{k}} y_{\beta(p,q)} w^{p,q} > t$.
In the former case, $f(\xx) = 0 \leq f(\yy)$. In the latter case, $f(\xx) \leq 1 = f(\yy)$.
\end{proof}

\begin{proposition}
\label{prop:tzminnonthr}
For every $k \geq 3$, the function $f_k$ is minimally non-threshold.
\end{proposition}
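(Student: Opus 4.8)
The plan is to check the two halves of minimality in turn. That $f_k$ is not threshold is immediate: by Lemma~\ref{lem:fkBl} with $\ell = k$, the function $f_k$ does not preserve $B_k$ (since $k$ divides $k$), so $f_k$ is not $k$-asummable, hence not asummable, hence not threshold by Theorem~\ref{thm:thresholdasummable}. For the other half, I would first reduce to identification minors. If $g \leq f_k$ and $g \not\equiv f_k$, with witness $\sigma \colon \nset{k^2} \to \nset{m}$, then $\sigma$ cannot be injective, for otherwise $g$ would be $f_k$ with its variables merely permuted and some inessential variables adjoined, giving $g \equiv f_k$ (every variable of $f_k$ is essential; this is easily checked and is used in the proof of Proposition~\ref{prop:tzantichain}). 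Picking $i \neq j$ with $\sigma(i) = \sigma(j)$ and setting $I = \{i,j\}$, the map $\sigma$ factors as $\sigma = \rho \circ \delta_I$ for some $\rho \colon \nset{k^2-1} \to \nset{m}$, whence $g \leq (f_k)_I$. Since $\thr$ is closed under taking minors, it therefore suffices to show that $(f_k)_I$ is threshold for every two-element subset $I$ of $\nset{k^2}$.

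Fix $I = \{\beta(c_1), \beta(c_2)\}$ with distinct cells $c_1 = (p_1, q_1)$ and $c_2 = (p_2, q_2)$, and put $P = \{p_1, p_2\}$, $Q = \{q_1, q_2\}$; at least one of $P$, $Q$ has two elements. The map $\delta_I$ restricts to a bijection from $\IB^{k^2-1}$ onto the subspace $U = \{\xx \in \IB^{k^2} : x_{c_1} = x_{c_2}\}$ and identifies $(f_k)_I$ with $f_k$ restricted to $U$; consequently $(f_k)_I$ is threshold if and only if there exist weights $\vv \in \IR^{k^2}$ and a threshold $s \in \IR$ with $f_k(\xx) = 1 \iff \vv \cdot \xx \geq s$ for all $\xx \in U$. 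The $p$-th row of the square has its entry at $c_1$ equal to $1$ exactly when $p = p_1$, and its entry at $c_2$ equal to $1$ exactly when $p = p_2$; hence it lies in $U$ iff $p \notin P$ or $p_1 = p_2$, so the rows in $U$ are all $k$ rows if $|P| = 1$ and precisely the rows with index outside $P$ if $|P| = 2$, and dually for columns and $Q$. I would search for $\vv$ of the form $\vv = \ww + \varepsilon\,\mathbf{d}$ with $s = t + \varepsilon\tau$, where $\mathbf{d} \in \{0,1\}^{k^2}$, $\tau > 0$, and $\varepsilon > 0$ is small, chosen so that every row in $U$ has $\mathbf{d}$-weight at least $\tau$ while every column in $U$ has $\mathbf{d}$-weight at most $\tau - 2$: if $|P| = 2$, take $\mathbf{d}$ to be the indicator of those cells whose row index lies outside $P$ and $\tau = k$ (then the rows in $U$ have $\mathbf{d}$-weight $k$ and every column has $\mathbf{d}$-weight $k-2$); if $|P| = 1$, so that $|Q| = 2$, take $\mathbf{d}$ to be the indicator of those cells whose column index lies in $Q$ and $\tau = 2$ (then every row has $\mathbf{d}$-weight $2$ and the columns in $U$ have $\mathbf{d}$-weight $0$).

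It remains to verify that this $\vv$ and $s$ separate correctly on $U$. Because the weights $w^{p,q}$ and the threshold $t$ are integers, any true point $\xx$ of $f_k$ with $\xx \cdot \ww > t$ satisfies $\xx \cdot \ww \geq t + 1$ and any false point with $\xx \cdot \ww < t$ satisfies $\xx \cdot \ww \leq t - 1$; choosing $\varepsilon$ so small that $\varepsilon(k^2 + \tau) < 1$ and using $0 \leq \mathbf{d} \cdot \xx \leq k^2$, such points stay on the correct side of $\vv \cdot \xx = s$. The remaining points of $U$ are exactly the rows (true points) and columns (false points), all of which have $\xx \cdot \ww = t$, so $\vv \cdot \xx = t + \varepsilon (\mathbf{d} \cdot \xx)$, which is at least $t + \varepsilon\tau = s$ for rows in $U$ and at most $t + \varepsilon(\tau - 2) < s$ for columns in $U$. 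Since on $U$ the true points of $f_k$ are precisely the rows together with the points with $\xx \cdot \ww > t$, and the false points are precisely the columns together with the points with $\xx \cdot \ww < t$, this gives $f_k(\xx) = 1 \iff \vv \cdot \xx \geq s$ on $U$, so $(f_k)_I$ is threshold and the proof is complete.

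The main obstacle is the bookkeeping in the last two paragraphs: correctly determining which rows and columns of the $k \times k$ square remain inside the subspace $U$ for each of the possible identifications (two cells in the same row, in the same column, or in neither), and then choosing $\mathbf{d}$ to separate precisely those, while keeping $\varepsilon$ small enough that the points off the hyperplane $\xx \cdot \ww = t$ are undisturbed. The integrality of the weights $w^{p,q}$ (hence of $t$) is exactly what makes the small perturbation $\varepsilon\,\mathbf{d}$ of $\ww$ safe.
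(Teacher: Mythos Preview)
Your proof is correct and takes a genuinely different route from the paper. The paper argues via asummability: for each identification minor $(f_k)_I$ it supposes, toward a contradiction, that there are false points $\aa^1,\dots,\aa^\ell$ and true points $\bb^1,\dots,\bb^\ell$ with equal sum, uses the merged weight vector to force each $\aa^i\delta_I$ to be a column and each $\bb^i\delta_I$ to be a row, and then obtains a contradiction from the row/column structure of the sums (depending on whether $p\neq p'$ or $q\neq q'$). You instead produce an explicit separating hyperplane for $f_k|_U$ by perturbing $\ww$ with a small $\varepsilon\,\mathbf{d}$, where $\mathbf{d}$ is chosen so that the rows surviving in $U$ outscore the columns surviving in $U$; integrality of the $w^{p,q}$ is what protects the off-hyperplane points from the perturbation.

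What each buys: your construction is direct and geometric, yielding actual weights and a threshold for $(f_k)_I$, and it makes the reduction to identification minors explicit (the paper simply asserts it suffices to treat those). The paper's argument stays within the asummability framework used throughout Section~\ref{sec:magic}, avoids any $\varepsilon$-bookkeeping, and is uniform in $I$ (no case split on $|P|$); it also dovetails with the proof of Lemma~\ref{lem:fkBl}, reusing the same inequality-chain trick. Both handle the same key dichotomy (the two identified cells share a row or a column versus neither), just at different stages: you encode it into the choice of $\mathbf{d}$, the paper into the final contradiction.
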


\begin{proof}
We need to show that every identification minor of $f_k$ is threshold.
Let $(p,q)$ and $(p',q')$ be distinct elements of $\nset{k} \times \nset{k}$, let $I = \{\beta(p,q), \beta(p',q')\}$, and assume without loss of generality that $\beta(p,q) < \beta(p',q')$.
We will show that $(f_k)_I$ is $\ell$-asummable for every $\ell \geq 2$ and hence threshold by Theorem~\ref{thm:thresholdasummable}.
Let $\ell \geq 2$, and let $\aa^1, \dots, \aa^\ell \in ((f_k)_I)^{-1}(0)$, $\bb^1, \dots, \bb^\ell \in ((f_k)_I)^{-1}(1)$.
Suppose, on the contrary, that $\aa^1 + \dots + \aa^\ell = \bb^1 + \dots + \bb^\ell$.
Let $\vv \in \IB^{k^2 - 1}$ be the tuple obtained from $\ww$ by replacing its $\beta(p,q)$-th entry by $w_{\beta(p,q)} + w_{\beta(p',q')}$ and deleting the $\beta(p',q')$-th entry.
% In precise terms, $\vv \in \IB^{k^2 - 1}$ is the unique tuple satisfying
% \[
% v_i = \sum_{j \in \delta_I^{-1}(i)} w_j,
% \]
% for all $i \in \nset{k^2 - 1}$.
(Before proceeding, we ask the reader to recall the definition of $\delta_I$ from~\eqref{eq:deltaI} in Section~\ref{susec:MinorsConstraints}.)
It clearly holds that $\xx \cdot \vv = \xx \delta_I \cdot \ww$ for all $\xx \in \IB^{k^2 - 1}$.
Therefore $((f_k)_I)(\xx) = f_k(\xx \delta_I) = 1$ if and only if $\xx \cdot \vv = \xx \delta_I \cdot \ww > t$ or $\xx \delta_I$ is a row. Note that if $\xx \delta_I$ is a row or a column, then $\xx \cdot \vv = \xx \delta_I \cdot \ww = t$.
In a similar way as we argued in the proof of Lemma~\ref{lem:fkBl}, we have
\[
\ell t
\geq \sum_{i = 1}^\ell \aa^i \cdot \vv
= (\aa^1 + \dots + \aa^\ell) \cdot \vv
= (\bb^1 + \dots + \bb^\ell) \cdot \vv
= \sum_{i = 1}^\ell \bb^i \cdot \vv
\geq \ell t.
\]
Hence $\aa^i \cdot \vv = t$ and $\bb^i \cdot \vv = t$ for all $i \in \nset{\ell}$, that is, $\aa^i \delta_I$ is a column and $\bb^i \delta_I$ is a row for all $i \in \nset{\ell}$.

Since $(p,q) \neq (p',q')$, we have $p \neq p'$ or $q \neq q'$.
If $p \neq p'$, then none of the rows $\bb^i \delta_I$ is the $p$-th row; hence $\phi^{-1}(\bb^1 \delta_I + \dots + \bb^\ell \delta_I)$ is a matrix with a row full of $0$'s, while $\phi^{-1}(\aa^1 \delta_I + \dots + \aa^\ell \delta_I)$ has no row full of $0$'s.
If $q \neq q'$, then none of the columns $\aa^i \delta_I$ is the $q$-th column; hence $\phi^{-1}(\aa^1 \delta_I + \dots + \aa^\ell \delta_I)$ is a matrix with a column full of $0$'s, while $\phi^{-1}(\bb^1 \delta_I + \dots + \bb^\ell \delta_I)$ has no column full of $0$'s.
On the other hand,
\[
\aa^1 \delta_I + \dots + \aa^\ell \delta_I
= (\aa^1 + \dots + \aa^\ell) \delta_I
= (\bb^1 + \dots + \bb^\ell) \delta_I
= \bb^1 \delta_I + \dots + \bb^\ell \delta_I.
\]
We have reached a contradiction.

We conclude that $(f_k)_I$ is $\ell$-asummable for every $\ell \geq 2$ and hence threshold.
\end{proof}

Taylor and Zwicker's functions $f_k$ constitute an infinite antichain of monotone, minimally non-threshold functions (Propositions~\ref{prop:tzantichain}, \ref{prop:tzmonotone}, \ref{prop:tzminnonthr}). It should be noted here that this antichain does not, however, characterize the set of monotone threshold functions in terms of forbidden minors, i.e., $M \cap \thr \neq \forbid(\{f_k : k \geq 3\})$. For example, there exist self-dual monotone non-threshold functions of arity $6$ (see, e.g., \cite{BioIba}), which clearly fail to have any of the $f_n$ as a minor.

%%%%%%%%%%%%%%%%%%%%%%%%%%%%%%%%%%%%%%%%%%%%%%%%%%%%%%%%%%%%

\appendix
\section{Post classes}
\label{App:Post}

We provide a concise description of all clones of Boolean functions
as well as characterizing sets of relations $R$ -- or, equivalently, relational
constraints $(R,R)$ -- for some clones; the characterizion of the remaining clones
is easily derived by noting that if $C_1 = \Pol (\mathcal{Q}_1)$ and $C_2 = \Pol(\mathcal{Q}_2)$,
then $C_1 \cap C_2 = \Pol(\mathcal{Q}_1 \cup \mathcal{Q}_2)$.
We make use of notations and terminology appearing in \cite{FP} and \cite{JGK}.

\begin{asparaitem}
\item $\Omega$ denotes the clone of all Boolean functions.  It is characterized by
the empty relation.

\item $T_0$ and $T_1$ denote the clones of $0$- and $1$-preserving functions, respectively, i.e.,
\[
T_0 = \{f \in \Omega : f(0, \dots, 0) = 0\}
\quad \text{and} \quad 
T_1 = \{f \in \Omega : f(1, \dots, 1) = 1\}.
\]
They are characterized by the unary relations $\{0\}$ and $\{1\}$, respectively.
\item $T_c$ denotes the clone of constant-preserving functions, i.e.,
$T_c = T_0 \cap T_1$.

\item $M$ denotes the clone of all monotone functions, i.e.,
\[
M = \{f \in \Omega : \text{$f(\mathbf{a}) \leq f(\mathbf{b})$ whenever $\mathbf{a} \leq \mathbf{b}$}\}.
\]
It is characterized by the binary relation 
${\leq} := \{(0,0), (0,1), (1,1)\}$.

\item $M_0 = M \cap T_0$, $M_1 = M \cap T_1$, $M_c = M \cap T_c$.

\item $S$ denotes the clone of all self-dual functions, i.e., 
\[
S = \{f \in \Omega : \dual{f} = f\}.
\]
It is characterized by the binary relation $\{(0,1),(1,0)\}$.

\item $S_c = S \cap T_c$, $SM = S \cap M$.

\item $L$ denotes the clone of all linear functions, i.e.,
\[
L = \{f \in \Omega :  f = c_0 \oplus c_1 x_1 \oplus \dots \oplus c_n x_n\}.
\]
It is characterized by the quaternary relation $\{(a,b,c,d) \in \IB^4 : a \oplus b \oplus c = d\}$.

\item $L_0 = L \cap T_0$, $L_1 = L \cap T_1$, $LS = L \cap S$, $L_c = L \cap T_c$.
\end{asparaitem}

Let $a \in \{0,1\}$. A set $A \subseteq \{0,1\}^n$ is said to be \emph{$a$-separating} if there is some $i\in [n]$
such that for every $(a_1, \dotsc, a_n) \in A$ we have $a_i = a$. 
A function $f$ is said to be \emph{$a$-separating} if $f^{-1}(a)$ is $a$-separating. 
The function $f$ is said to be \emph{$a$-separating of rank $k \geq 2$} if every subset 
$A \subseteq f^{-1}(a)$ of size at most $k$ is $a$-separating.

\begin{asparaitem}
\item For $m \geq 2$, $U_m$ and $W_m$ denote the clones of all $1$- and $0$-separating functions of rank $m$, respectively.
They are characterized by the $m$-ary relations $\IB^m \setminus \{(0, \dots, 0)\}$ and $\IB^m \setminus \{(1, \dots, 1)\}$, respectively.

\item $U_\infty$ and $W_\infty$ denote the clones of all $1$- and $0$-separating functions, respectively, i.e., 
$U_\infty = \bigcap_{k \geq 2} U_k$ and $W_\infty = \bigcap_{k \geq 2} W_k$.

\item $T_cU_m = T_c \cap U_m$ and $T_cW_m = T_c \cap W_m$, for $m = 2, \dotsc, \infty$.

\item $MU_m = M \cap U_m$ and $MW_m = M \cap W_m$, for $m = 2, \dotsc, \infty$.

\item $M_cU_m = M_c \cap U_m$ and $M_cW_m = M_c \cap W_m$, for $m = 2, \dotsc, \infty$.

\item $\Lambda $ denotes the clone of all conjunctions and constants, i.e.,
\begin{multline*}
\Lambda = \{f \in \Omega : f = x_{i_1} \wedge \dotsb \wedge x_{i_n}\} \cup 
\{\mathbf{0}^{(n)}: n\geq 1\} \cup \{\mathbf{1}^{(n)}: n\geq 1\}.
\end{multline*}
It is characterized by the ternary relation 
$\{(a,b,c) : a \wedge b = c\}$.

\item $\Lambda_0 = \Lambda \cap T_0$, $\Lambda_1 = \Lambda \cap T_1$, $\Lambda_c = \Lambda \cap T_c$. 

\item $V$ denotes the clone of all disjunctions and constants, i.e.,
\begin{multline*}
V = \{f \in \Omega : f = x_{i_1} \vee \dotsb \vee x_{i_n}\}  \cup 
\{\mathbf{0}^{(n)}: n\geq 1\} \cup \{\mathbf{1}^{(n)}: n\geq 1\}.
\end{multline*}
It is characterized by the ternary relation 
$\{(a,b,c) : a \vee b = c\}$.

\item $V_0 = V \cap T_0$, $V_1 = V \cap T_1$, $V_c = V \cap T_c$.

\item $\Omega (1)$ denotes the clone of all projections, negations, and constants. 
It is characterized by the ternary relation 
$\{(a,b,c) : \text{$a = b$ or $b = c$}\}$.

\item $I^* = \Omega(1) \cap S$, $I = \Omega(1) \cap M$.

\item $I_0 = I \cap T_0$, $I_1 = I \cap T_1$.

\item $I_c$ denotes the smallest clone containing only projections, i.e., $I_c = I \cap T_c$.
\end{asparaitem}

%%%%%%%%%%%%%%%%%%%%%%%%%%%%%%%%%%%%%%%%%%%%%%%%%%%%%%%%%%%%

\end{document}